\numberwithin{equation}{section}
\theoremstyle{plain}
\newtheorem{thm}{Theorem}[section]
\newtheorem{lem}[thm]{Lemma}
\newtheorem{prop}[thm]{Proposition}
\newtheorem{cor}[thm]{Corollary}
\newtheorem{lem-defi}[thm]{Lemma-Definition}
\newtheorem{prop-defi}[thm]{Proposition-Definition}
\theoremstyle{remark}
\newtheorem{rmk}[thm]{Remark}
\theoremstyle{definition}
\setlist[enumerate,1]{label={\rm(\roman*)}, ref={\rm\roman*}}
\newcommand{\mcC}{{\mathcal C}}
\newcommand{\mcD}{{\mathcal D}}
\newcommand{\mcE}{{\mathcal E}}
\newcommand{\mcH}{{\mathcal H}}
\newcommand{\mcM}{{\mathcal M}}
\newcommand{\mcO}{{\mathcal O}}
\newcommand{\mcS}{{\mathcal S}}
\newcommand{\mcT}{{\mathcal T}}
\newcommand{\mcU}{{\mathcal U}}
\newcommand{\mcV}{{\mathcal V}}
\newcommand{\mcW}{{\mathcal W}}
\newcommand{\mcX}{{\mathcal X}}
\newcommand{\mcY}{{\mathcal Y}}
\newcommand{\mbA}{{\mathbb A}}
\newcommand{\mbC}{{\mathbb C}}
\newcommand{\mbF}{{\mathbb F}}
\newcommand{\mbG}{{\mathbb G}}
\newcommand{\mbH}{{\mathbb H}}
\newcommand{\mbO}{{\mathbb O}}
\newcommand{\mbP}{{\mathbb P}}
\newcommand{\mbQ}{{\mathbb Q}}
\newcommand{\mbR}{{\mathbb R}}
\newcommand{\mbS}{{\mathbb S}}
\newcommand{\mbZ}{{\mathbb Z}}
\newcommand{\bH}{{\textbf H}}
\newcommand{\bP}{{\mathbb P}}
\DeclareMathOperator{\id}{id}
\DeclareMathOperator{\rk}{rk}
\DeclareMathOperator{\com}{com}
\DeclareMathOperator{\diag}{diag}
\DeclareMathOperator{\lin}{lin}
\DeclareMathOperator{\Ad}{Ad}
\DeclareMathOperator{\rank}{rank}
\DeclareMathOperator{\NE}{NE}
\newcommand{\fh}{{\mathfrak h}}
\newcommand{\fk}{{\mathfrak k}}
\newcommand{\fl}{{\mathfrak l}}
\newcommand{\aut}{{\mathfrak a}{\mathfrak u}{\mathfrak t}}
\newcommand{\BP}{{\mathbb P}}
\def\RatCurves{\mathop{\rm RatCurves}\nolimits}
\def\Aut{\mathop{\rm Aut}\nolimits}
\def\GL{\mathop{\rm GL}\nolimits}
\def\Gr{\mathop{\rm Gr}\nolimits}
\def\Sp{\mathop{\rm Sp}\nolimits}
\def\Lag{\mathop{\rm Lag}\nolimits}
\def\Pic{\mathop{\rm Pic}\nolimits}
\def\End{\mathop{\rm End}\nolimits}
\def\SL{\mathop{\rm SL}\nolimits}
\def\SO{\mathop{\rm SO}\nolimits}
\def\tr{\mathop{\rm tr}\nolimits}
\def\det{\mathop{\rm det}\nolimits}
\def\Id{\mathop{\rm Id}\nolimits}
\def\Inn{\mathop{\rm Inn}\nolimits}
\newcommand{\sC}{{\mathcal C}}
\newcommand{\SJ}{{\mathcal J}}
\newcommand{\sJ}{{\mathcal J}}
\newcommand{\sK}{{\mathcal K}}
\newcommand{\sU}{{\mathcal U}}
\newcommand{\n}{{\rm n}}
\def\lra{\longrightarrow}
\def\shortbar{%
\smash{\scalebox{0.4}[1.0]{$-$}}}
\newcommand{\xdashrightarrow}[2][]{\ext@arrow 0359\rightarrowfill@@{#1}{#2}}
\def\rightarrowfill@@{\arrowfill@@\relax\shortbar\dashrightarrow}
\def\arrowfill@@#1#2#3#4{%
  $\m@th\thickmuskip0mu\medmuskip\thickmuskip\thinmuskip\thickmuskip
   \relax#4#1
   \xleaders\hbox{$#4#2$}\hfill
   #3$%
}
\newcommand{\longdashrightarrow}{\xdashrightarrow{\hphantom{0pt}}}
\title{Rigidity of projective symmetric manifolds of Picard number 1 associated to composition algebras}
\author{Yifei Chen}
\address{AMSS and HLM, Chinese Academy of Sciences, 55 ZhongGuanCun East Road, Beijing, 100190, China and School of Mathematical Sciences, University of Chinese Academy of Sciences, Beijing, China }
\email{yifeichen@amss.ac.cn}
\author{Baohua Fu}
\address{AMSS, HLM and MCM, Chinese Academy of Sciences, 55 ZhongGuanCun East Road, Beijing, 100190, China and School of Mathematical Sciences, University of Chinese Academy of Sciences, Beijing, China }
\email{bhfu@math.ac.cn}
\author{Qifeng Li}
\address{School of Mathematics, Shandong University, Jinan, 250100, China}
\email{qifengli@sdu.edu.cn}
\begin{document}


\maketitle

\begin{prelims}

\DisplayAbstractInEnglish

\bigskip

\DisplayKeyWords

\medskip

\DisplayMSCclass

\end{prelims}


\newpage

\setcounter{tocdepth}{1}

\tableofcontents


\section{Introduction}

Throughout this paper, we work over the complex number field. A smooth projective variety $X$ is said to be {\em rigid} if for any smooth projective family over a connected base with one fiber isomorphic to $X$, all fibers are isomorphic to $X$.  It is a difficult and subtle problem to prove the rigidity.  Even for rational homogeneous varieties $G/P$ of Picard number $1$, the rigidity does not always hold. To wit, let $B_3/P_2$ be the variety of lines on a $5$-dimensional smooth hyperquadric $\mathbb{Q}^5$. An explicit family specializing $B_3/P_2$ to a smooth projective $G_2$-variety is constructed by Pasquier and Perrin in \cite{PP}.  In \cite{HL}, it is shown that this is the only smooth non-isomorphic specialization of $B_3/P_2$.  It turns out that $B_3/P_2$ is the only exception among all $G/P$ of Picard number $1$, as shown by the following.

\begin{thm}[\textit{cf.}~\cite{Hw97, HM98, HM02, HM}] \label{t.HMRigidity}
 A rational homogeneous variety of Picard number $1$ is rigid except in the case of $B_3/P_2$.
\end{thm}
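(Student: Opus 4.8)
The plan is to run the argument through the theory of minimal rational curves and their varieties of minimal rational tangents (VMRT), in the spirit of Hwang and Mok. Fix a smooth projective family $\pi\colon \mathcal{X}\to \Delta$ over a disk with central fibre $\mathcal{X}_0\cong S:=G/P$, and aim to show $\mathcal{X}_t\cong S$ for $t$ near $0$. The first step is to verify that the relevant structures deform. Being Fano of Picard number $1$ is a deformation-invariant condition, so every nearby fibre $\mathcal{X}_t$ is again Fano of Picard number $1$. The minimal rational curves on $S$ (the lines under the minimal embedding) deform to a covering family of minimal rational curves on $\mathcal{X}_t$, and assembling these relatively produces a relative VMRT $\mathcal{C}\subset \mathbb{P}(T_{\mathcal{X}/\Delta})$ whose fibre over a general point $x\in\mathcal{X}_t$ is the projective subvariety $\mathcal{C}_x\subset\mathbb{P}(T_x\mathcal{X}_t)$ of tangent directions of minimal rational curves through $x$.

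The heart of the argument is to show that $\mathcal{C}_x$ is projectively isomorphic to the VMRT $Z:=\mathcal{C}_{x_0}$ of $S$. For $G/P$ of Picard number $1$ the VMRT at a general point is itself an explicit rational homogeneous variety $G_0/P_0$ sitting in $\mathbb{P}(\mathfrak{g}_{-1})\subset \mathbb{P}(T_{x_0}S)$, and its projective-geometric invariants (the second fundamental form, the symbol algebra $\mathfrak{m}=\bigoplus_{i<0}\mathfrak{g}_i$ and its Tanaka prolongations) are computable by representation theory. I would prove that $Z\subset\mathbb{P}^{n-1}$ is infinitesimally rigid as an embedded projective variety, so that every nearby deformation of the embedded VMRT-structure is projectively equivalent to $Z$; by upper semicontinuity applied to the relative VMRT, this forces $\mathcal{C}_x\cong Z$ for the general fibre.

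The rigidity claim is established via a vanishing theorem for the deformation cohomology governing the pair $(\mathbb{P}^{n-1},Z)$, computed through Lie algebra cohomology and Kostant's theorem, together with the vanishing of the positive-degree prolongations of the graded Lie algebra $\mathfrak{m}\oplus\mathfrak{g}_0$; this is what rules out extra infinitesimal automorphisms and pins down the $G$-structure that $\mathcal{C}$ defines on $\mathcal{X}_t$. It is precisely here that the single exception enters: the prolongation/cohomology vanishing fails for $B_3/P_2$, so its VMRT-structure admits a genuine deformation, which is realised globally by the explicit Pasquier--Perrin family degenerating $B_3/P_2$ to a smooth $G_2$-variety. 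For every other $G/P$ of Picard number $1$ the vanishing holds, and the VMRT of the general fibre is rigid as required. (One must treat the Hermitian symmetric cases, where the grading has length one and the isotropy is irreducible, somewhat separately from the general case, where the longer grading demands the full Tanaka-theoretic prolongation analysis.)

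Finally, having shown that $\mathcal{X}_t$ is a Fano manifold of Picard number $1$ whose general VMRT is projectively isomorphic to that of $G/P$, and that the associated distribution is locally flat, I would invoke the Cartan--Fubini type extension and recognition theorem of Hwang and Mok to reconstruct the variety from its VMRT and conclude $\mathcal{X}_t\cong G/P$. The main obstacle is the rigidity step of the second and third paragraphs: it requires a case-by-case cohomological analysis of the VMRT of every $G/P$ of Picard number $1$, and the delicate passage from controlling $\mathcal{C}_x$ merely as a projective variety to controlling the full differential-geometric structure it induces on the nearby fibre, which is where the bulk of the Hwang--Mok machinery is deployed and where the $B_3/P_2$ exception is isolated.
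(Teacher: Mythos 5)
The paper does not actually prove this theorem: it is quoted from the literature (Hwang, Hwang--Mok), and the introduction only records the two-step strategy (invariance of the VMRT under specialization, then recognition of $G/P$ from its VMRT). So your proposal can only be measured against that outline and the cited works, and against that standard it captures the right ingredients but contains one genuine structural error.

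You have set the deformation up in the wrong direction. Rigidity over a connected base amounts to showing that the locus $\{t:\mathcal{X}_t\cong G/P\}$ is both open and closed. Openness is exactly the statement you formulate ($\mathcal{X}_0\cong G/P$ implies $\mathcal{X}_t\cong G/P$ for $t$ near $0$), and it follows at once from $H^1(G/P,T_{G/P})=0$ (Bott vanishing) together with Kodaira--Spencer theory; none of the VMRT machinery is needed for it. The hard content of the theorem --- the part the paper's introduction isolates with ``we only need to prove that $G/P$ is rigid under specialization'' --- is closedness: given $\mathcal{X}_t\cong G/P$ for all $t\neq 0$, show $\mathcal{X}_0\cong G/P$. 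Your semicontinuity and degeneration arguments are the natural tools for that direction (one must show that the flat limit of the VMRTs along a section through a general point of $\mathcal{X}_0$ is still projectively equivalent to the model, which is where non-degeneracy of tangential varieties, minimality of the embedding, and in the non-Hermitian cases the Tanaka prolongation analysis actually enter), but as written they are deployed to reprove the easy openness statement, where semicontinuity runs the wrong way to be of use. Two further imprecisions: ``infinitesimal rigidity of $Z\subset\mathbb{P}^{n-1}$ as an embedded projective variety'' is not the condition Hwang--Mok verify (what is needed is that $Z$ admits no nontrivial projective degeneration arising as a limit of VMRTs, a different and more delicate statement); and for $B_3/P_2$ the documented failure is precisely that its VMRT does degenerate along the Pasquier--Perrin family, i.e., the invariance step breaks there, rather than a clean cohomological or prolongation vanishing being violated at the recognition step.
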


The key ingredient for the proof is the VMRT theory developed by Hwang and Mok.  In the simplest case of a projective manifold $X$ covered by lines (which is the case for our paper), the VMRT $\sC_x \subset \mbP T_xX$ at a general point $x$ is just the Hilbert scheme of lines through $x$. This projective subvariety $\sC_x \subset \mbP T_xX$ encodes a lot of global geometry of $X$, and in some cases, we can even recognize $X$ from its VMRT at general points.

As $G/P$ is locally rigid, we only need to prove that $G/P$ is rigid under specialization; namely, for a smooth projective family $\mcX \to \Delta$ such that $\mcX_t \simeq G/P$ for all $t \neq 0$, we have $\mcX_0 \simeq G/P$. The proof essentially consists of two steps: the first is to show the VMRT of $\mcX_0$ is isomorphic to that of $G/P$, and the second is to use the recognization of $G/P$ from its VMRT.

In many cases, the invariance of the VMRT can be proved, while the recognization problem is in general much more difficult. In \cite{Par}, it is observed that for odd Lagrangian Grassmannians (which are not homogeneous), one can directly show that $H^1(\mcX_0, T_{\mcX_0})=0$ by using VMRT theory. Hence $\mcX_0$ is locally rigid and isomorphic to nearby fibers, which proves the rigidity for odd Lagrangian Grassmannians.

The goal of this paper is to prove the rigidity for projective symmetric varieties associated to composition algebras. Recall that there are exactly four complex composition algebras: $\mbA = \mbC$, $\mbC\oplus \mbC$, $\mbH_\mbC$, $\mbO_\mbC$. To  such an $\mbA$, we can associate algebraic groups $\SL_3(\mbA)$ and $\SO_3(\mbA)$ with an involution $\theta$ such that $\SL_3(\mbA)^\theta = \SO_3(\mbA)$. The quotient $\SL_3(\mbA)/\SO_3(\mbA)$ is a symmetric homogeneous space, which admits a unique smooth equivariant completion of Picard number $1$, denoted by $X(\mbA)$. It turns out that $X(\mbA)$ is a smooth hyperplane section of one of the following varieties (\textit{cf.} \cite{R10}):
$$
\Lag(3,6), \quad \Gr(3,6), \quad \mbS_6, \quad E_7/P_7,
$$
where $\Lag(3,6)$ is the Lagrangian Grassmannian associated to $\mbC^6$ and $\mbS_6$ is the $15$-dimensional spinor variety.  The main result of this paper is the following.

\begin{thm} \label{t.main}
For any complex composition algebra $\mbA$, the variety $X(\mbA)$ is rigid.
\end{thm}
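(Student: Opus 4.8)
The plan is to follow the two-step VMRT strategy described above, using the cohomological shortcut of \cite{Par} but adapting it to the non-homogeneous setting. Since $X(\mbA)$ is a smooth hyperplane section of a variety covered by lines, it is itself a Fano manifold of Picard number $1$ covered by lines, so it suffices to prove rigidity under specialization. Thus let $\mcX \to \Delta$ be a smooth projective family with $\mcX_t \simeq X(\mbA)$ for all $t \neq 0$, and aim to show $\mcX_0 \simeq X(\mbA)$. First I would verify that $\mcX_0$ is again a Fano manifold of Picard number $1$ covered by lines: the Fano condition and $b_2 = 1$ are preserved in the family, and the minimal rational curves on the general fibers specialize to a covering family of rational curves of minimal degree on $\mcX_0$.

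The core of the argument is the analysis of the VMRT. At a general point $x$, the VMRT $\sC_x \subset \mbP T_x X(\mbA)$ is a smooth hyperplane section of the Severi variety $S_\mbA \subset \mbP^{N}$ --- the Veronese surface $v_2(\mbP^2)$, the Segre variety $\mbP^2 \times \mbP^2$, the Grassmannian $\Gr(2,6)$, or the Cayley plane $\mbO\mbP^2 = E_6/P_1$ for $\mbA = \mbC,\ \mbC\oplus\mbC,\ \mbH_\mbC,\ \mbO_\mbC$ respectively. Indeed, these are the VMRTs of the four ambient homogeneous varieties, and a line through $x$ lies in the hyperplane section precisely when its tangent direction lies in $\mbP T_xX(\mbA)$, since a line tangent to a hyperplane is contained in it; hence $\sC_x = S_\mbA \cap \mbP T_xX(\mbA)$. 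I would then prove the \emph{invariance of the VMRT}: the limit $\sC_0 \subset \mbP T_{x_0}\mcX_0$ at a general point $x_0$ is projectively isomorphic to $\sC_x$. Here the VMRTs $\sC_t$ fit into a flat family of projective subvarieties, all isomorphic to $\sC_x$ for $t \neq 0$, and one must rule out a degenerate flat limit. The tool is the rigidity of $\sC_x$ as a projective subvariety --- smoothness of the relevant Hilbert scheme, controlled through vanishing of the appropriate twists of the normal bundle of $\sC_x$ and of the obstruction spaces for deforming $S_\mbA$ and its hyperplane sections --- together with semicontinuity of the numerical invariants.

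With the VMRT identified, I would conclude by showing $H^1(\mcX_0, T_{\mcX_0}) = 0$, exactly as in \cite{Par} for the odd Lagrangian Grassmannians. Knowing that $\mcX_0$ is covered by lines with VMRT equal to a hyperplane section of $S_\mbA$, one invokes the Hwang--Mok machinery relating the cohomology of $T_{\mcX_0}$ to the symbol algebra of the VMRT and its prolongations: the restriction sequences along the universal family of lines reduce the computation of $H^1(\mcX_0, T_{\mcX_0})$ to a finite, case-by-case representation-theoretic vanishing attached to the VMRT. The vanishing $H^1(\mcX_0, T_{\mcX_0}) = 0$ then forces $\mcX_0$ to be locally rigid, hence isomorphic to the nearby fibers $X(\mbA)$, which proves Theorem~\ref{t.main}. (Alternatively, one could replace this last step by a Cartan--Fubini type recognition of $X(\mbA)$ from its VMRT, in the spirit of \cite{HM}.)

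The main obstacle is precisely the combination of VMRT invariance and the final cohomology vanishing in the \emph{non-homogeneous} setting. For $G/P$ one can exploit homogeneity of the VMRT and known prolongation computations; here $\sC_x$ is only a hyperplane section of a Severi variety, so both the projective rigidity of $\sC_x$ and the vanishing $H^1(\mcX_0, T_{\mcX_0}) = 0$ must be established by hand, case by case. I expect the Cayley-plane case ($\mbA = \mbO_\mbC$, with $X(\mbA)$ a hyperplane section of $E_7/P_7$) to be the most demanding, both because the $E_6$/$E_7$ representation theory is the heaviest and because the large dimension makes the control of the higher-order obstructions along the covering lines the genuinely delicate point.
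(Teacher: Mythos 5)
Your first two steps (reduction to specialization, identification of the VMRT of $X(\mbA)$ as a hyperplane section of the Severi variety, and invariance of the VMRT in the family) are consistent with what the paper does in Proposition~\ref{p.Invariance}, though the paper's actual argument for invariance goes through Theorem~\ref{t.HMRigidity} for the homogeneous VMRTs, Wi\'sniewski's classification for the index-$2$ Fano threefold case $\mbA=\mbC\oplus\mbC$, and non-degeneracy of tangential varieties, rather than Hilbert-scheme rigidity of $\sC_x\subset\mbP T_xX(\mbA)$.

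The genuine gap is your final step. You propose to conclude by proving $H^1(\mcX_0,T_{\mcX_0})=0$ ``exactly as in \cite{Par}.'' This is precisely the approach of Kim and Park in \cite{KP}, and it does \emph{not} close: the VMRT/prolongation machinery only yields $\dim H^1(\mcX_0,T_{\mcX_0})\le 1$, because the relevant prolongation space is one-dimensional (reflected in the identity $\dim\mathfrak{aut}(X(\mbA))=\dim\mathfrak{aut}(\sC(\mbA))+\dim X(\mbA)$ of equation~\eqref{eq}). The residual case $\dim H^1(\mcX_0,T_{\mcX_0})=1$, in which $\mcX_0$ is an equivariant compactification of $\mathbb{G}_a^n$ with $\mathfrak{aut}(\mcX_0)\simeq\mathbb{C}^n\rtimes(\mathfrak{so}_3(\mbA)\oplus\mathbb{C})$ (Proposition~\ref{p.centralfiber}\eqref{p.cf-2}), is fully compatible with all the VMRT data and cannot be excluded by representation-theoretic vanishing alone. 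Excluding it is the actual content of the paper: one restricts to the fixed locus of a family of tori $\bH_t$ coming from a maximal torus of $\SO_3(\mbA)$ to obtain a smooth family of surfaces $\mcY\to\Delta$ with $\mcY_t\simeq Y(\mbA)$ for $t\neq 0$, shows by an analysis of anticanonical coefficients that $\mcY_0$ would have to be the blowup of $\mathbb{P}^2$ along three \emph{collinear} points, and then derives a contradiction because the involution $\Theta_0$ induced by $\theta$ would have to send the extremal ray of $\overline{\NE}(\mcY_0)$ spanned by $F_i$ to the non-extremal class $F_0+F_i$. Note also that the degeneration of $Y(\mbA)$ to the three-collinear-points blowup genuinely exists as a family of surfaces (see the Remark before the proof of Theorem~\ref{t.main}), so no purely local or cohomological argument at the surface level can rule it out either; the involution is essential. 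Your parenthetical alternative (Cartan--Fubini recognition of $X(\mbA)$ from its VMRT) is likewise not available off the shelf, since the recognition problem for these non-homogeneous targets is open.
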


We first remark that for $\mbA = \mbC$, $X(\mbA)$ is a Mukai variety, so its smooth deformation is again a Mukai variety, hence again a hyperplane section of $\Lag(3,6)$ by the classification of Mukai varieties. This shows that $X(\mbA)$ is rigid in this case.  We will assume $\mbA \neq \mbC$ in the following.

The rigidity problem of $X(\mbA)$ was studied by Kim and Park in \cite{KP}. When $\mbA = \mbH_\mbC$ or $\mbO_\mbC$, they prove the invariance of the VMRT and, moreover, observe that $\dim H^1(\mcX_0, T_{\mcX_0}) \leq 1$.  If $H^1(\mcX_0, T_{\mcX_0}) =0$, then $\mcX_0$ is locally rigid, and thus it is isomorphic to nearby fibers $X(\mbA)$. When $\dim H^1(\mcX_0, T_{\mcX_0}) = 1$, $\mcX_0$ is an equivariant compactification of the vector group $\mathbb{G}_a^n$ with $n=\dim X(\mbA)$. With the help of \cite{W}, this result can be easily extended to the case $\mbA = \mbC \oplus \mbC$.

To prove Theorem~\ref{t.main}, we will exclude the case of equivariant compactifications.  Let $\mcX \to \Delta$ be a specialization of $X(\mbA)$; \textit{i.e.} $\mcX_t \simeq X(\mbA)$ for all $t \neq 0$ and $\mcX_0$ is an equivariant compactification of $\mathbb{G}_a^n$. The Lie algebra of the automorphism group of the central fiber is given by $\mathfrak{aut}(\mcX_0) \simeq \mathbb{C}^n\rtimes (\mathfrak{so}_3(\mbA) \oplus \mathbb{C})$.  We will consider a family of tori ${\mathbf H}_t \subset \Aut^0(\mcX_t)$ induced from a maximal torus of $\SO_3(\mbA)$ and then take a connected component $\mcY$ of the torus-fixed locus $\mcX^{\mathbf{H}}$.  As the rank of $\mathfrak{sl}_3(\mbA)$ is $2$ more than that of $\mathfrak{so}_3(\mbA)$, there is an extra $2$-dimensional torus acting on $\mcY_t$ for $t \neq 0$.  It turns out that $\mcY \to \Delta$ is a family of smooth projective surfaces with general fiber $\mcY_t$ isomorphic to the blowup of $\mathbb{P}^2$ along three coordinate points. The central fiber $\mcY_0$ is an equivariant compactification of $\mathbb{G}_a^2$. By delicate computations, we will show that $\mcY_0$ is isomorphic to the blowup of $\mathbb{P}^2$ along three colinear points.  On the other hand, the involution $\theta$ on $\SL_3(\mbA)$ induces an involution $\Theta$ on the family $\mcY/\Delta$, which preserves the boundaries of $\mcY_t$ for all $t$.  It turns out that the involution $\Theta_0\colon \mcY_0 \to \mcY_0$ sends extremal rays of the Mori cone $\overline{\NE}(\mathcal{Y}_0)$ to non-extremal rays, which gives a contradiction.

\subsection*{Acknowledgements}
We are very grateful to the referee for their careful reading and helpful suggestions.

\section{Projective symmetric manifolds of Picard number 1 associated to composition algebras}

\subsection{Composition algebras and associated Lie groups}
Let $\mbA_\mbR$ be one of the four real normed division algebras, $\mbR$, $\mbC$, $\mbH$, $\mbO$, which admits an involution $x \mapsto \bar{x}$, called conjugation.  It is well known that the fixed points under this conjugation are exactly the base field $\mbR$. Note that $\mbH$, $\mbO$ are non-commutative and $\overline{ab} = \bar{b} \bar{a}$ for all $a, b \in \mbA_\mbR$.

Let $\mbA = \mbA_\mbR \otimes_\mbR \mbC$ be the complexification of $\mbA_\mbR$, which is one of the following: $\mbC$, $\mbC\oplus \mbC$, $\mbH_\mbC$, $\mbO_\mbC$. The algebra structure on $\mbA$ is given by $(a\otimes c, a' \otimes c') \mapsto aa' \otimes cc'$ for multiplication and $\overline{a \otimes c} = \bar{a} \otimes \bar{c}$ for conjugation.  Note that the conjugation fixes exactly elements in $\mbC$.  It turns out that $\mbA$ is a composition algebra and any finite-dimensional composition algebra over $\mbC$ is isomorphic to one of these
$\mbA$ (see for example \cite[Chapter 5, Section 1]{VGO}).

We consider the following vector space of $\mbA$-Hermitian matrices of order $3$ with coefficients in $\mbA$:
$$
\SJ_3(\mbA) =\left\{ \begin{pmatrix} r_1 & \bar{x}_3 & \bar{x}_2 \\ x_3 & r_2 & \bar{x}_1 \\ x_2 & x_1 & r_3 \end{pmatrix},  r_i \in \mbC, x_i \in \mbA \right\}.
$$

It turns out that $\SJ_3(\mbA)$ has the structure of a Jordan algebra with multiplication given by $A \circ B = \frac{1}{2} (AB + BA)$, where $AB$ is the usual matrix multiplication.  The comatrix of $A \in \SJ_3(\mbA)$ is defined as
$$
\com(A) = A^2 - \tr(A) A + \tfrac{1}{2} ((\tr(A))^2 - \tr(A^2)) \Id.
$$
Then there exists a degree $3$ polynomial $\det(A)$ (called the determinant of $A$) such that $\com(A)\circ A = \det(A) \Id$. From this equality, we can easily deduce that
$$
\det(A) = \tfrac{1}{3} \tr(A^3) - \tfrac{1}{2} \tr(A) \tr(A^2) + \tfrac{1}{6}(\tr(A))^3.
$$

For $A = \begin{pmatrix} r_1 & \bar{x}_3 & \bar{x}_2 \\ x_3 & r_2 & \bar{x}_1 \\ x_2 & x_1 & r_3 \end{pmatrix}$, we have the following explicit formulae:

\begin{align*}
\tr(A)  = &\sum_i r_i, \quad  \quad  \tr(A^2) = \sum_i \left(r_i^2 + 2 x_i \bar{x}_i\right),  \\
\tr(A^3) = & \sum_i \left(r_i^3 + 3 \sum_{j\neq i} r_i x_j \bar{x}_j\right)  + \left(x_1x_3\bar{x}_2 + \bar{x}_2 x_1x_3 + x_3\bar{x}_2x_1 + x_2\bar{x}_3\bar{x}_1 + \bar{x}_3\bar{x}_1x_2 + \bar{x}_1x_2\bar{x}_3\right).
\end{align*}
It then follows that
\begin{align*}
\det(A)  = & r_1 r_2 r_3 - r_1 x_1 \bar{x}_1 - r_2 x_2 \bar{x}_2 - r_3 x_3 \bar{x}_3 \\ & + \tfrac{1}{3} (x_1x_3\bar{x}_2 + \bar{x}_2 x_1x_3 + x_3\bar{x}_2x_1 + x_2\bar{x}_3\bar{x}_1 + \bar{x}_3\bar{x}_1x_2 + \bar{x}_1x_2\bar{x}_3).
\end{align*}

Let us have a closer look at the $\mbC$-valued polynomial $\det$ on $\SJ_3(\mbA)$. For $A, B, C\in\SJ_3(\mbA)$,  define
\begin{align*}
& A\times B=\tfrac{1}{2}(2A\circ B-\tr(A)B-\tr(B)A+(\tr(A)\tr(B)-\tr(A\circ B))\Id), \\
&  (A, B, C)=\tr(A \circ(B \times C)).
\end{align*}
Then it follows that
$$
\com(A)=A\times A,\quad  \det(A)=\tfrac{1}{3}(A, A, A)\  {\text{ and }} \ \com(A)\times \com(A)=\det(A)A.
$$
We now define the following two subgroups of $\GL_\mbC(\SJ_3(\mbA))$:
\begin{align*}
\SL_3(\mbA) &= \{ g \in \GL_\mbC(\SJ_3(\mbA))| \det(g(A)) = \det (A), \forall A \in \SJ_3(\mbA)\}\\
\SO_3(\mbA) &= \{ g \in \SL_3(\mbA)| \tr(g(A)^2) = \tr (A^2), \forall A \in \SJ_3(\mbA)\}.
\end{align*}

The following table gives the corresponding groups:

\begin{center}
\begin{tabular}{|c | c  c  c  c| }
\hline
 $\mbA$ & $\quad \mbC$ & $\quad \mbC \oplus \mbC$ & $\quad \mbH_\mbC$ & $\quad \mbO_\mbC$ \\
\hline
$\SL_3(\mbA)$ & $\quad \SL_3$ & $\quad \SL_3 \times \SL_3 $ & $\quad \SL_6$ & $\quad E_6$ \\
\hline
$\SO_3(\mbA)$ & $\quad \SO_3$ & $\quad \SL_3$ & $\quad \Sp_6$ & $\quad F_4$\\
\hline
\end{tabular}
\end{center}

Consider the  two matrices
$$
M_{12} = \begin{pmatrix}  0 & 1 & 0 \\ 1 & 0 & 0 \\ 0 & 0 & 1 \end{pmatrix}, \quad  M_{23} = \begin{pmatrix}  1 & 0 & 0 \\ 0 & 0 & 1 \\ 0 & 1 & 0 \end{pmatrix}.
$$
Note that  $M_{12}^2=M_{23}^2=\Id$. Define
$$\sigma_{12}\colon \SJ_3(\mbA) \lra \SJ_3(\mbA), \quad \quad A \longmapsto M_{12} A M_{12}. $$
Similarly, we can define $\sigma_{23}$ by using $M_{23}$.

\begin{lem}\label{l.DefineSigma}
The two elements $\sigma_{12}, \sigma_{23}$ are in $\SO_3(\mbA)$, and the subgroup $\langle\sigma_{12},\sigma_{23}\rangle$ generated by them is isomorphic to $\mathfrak{S}_3$.
\end{lem}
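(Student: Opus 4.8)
The plan is to proceed in three stages: first show that $\sigma_{12},\sigma_{23}$ are $\mbC$-linear automorphisms of the vector space $\SJ_3(\mbA)$; then verify that they preserve both $\det$ and $\tr(A^2)$, placing them in $\SO_3(\mbA)$; and finally identify the group they generate. For the first stage I would note that $M_{12}$ and $M_{23}$ are real symmetric permutation matrices with entries in $\mbC\subset\mbA$, and $M_{ij}^2=\Id$. Writing $A^\ast=\bar A^{\,T}$ for the conjugate transpose, the condition defining $\SJ_3(\mbA)$ is $A^\ast=A$; since $M_{ij}$ has self-conjugate central entries, $(M_{ij}AM_{ij})^\ast=M_{ij}A^\ast M_{ij}=M_{ij}AM_{ij}$, so $\sigma_{ij}$ maps $\SJ_3(\mbA)$ into itself, and being $\mbC$-linear with $\sigma_{ij}^2=\id$ it lies in $\GL_\mbC(\SJ_3(\mbA))$. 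A direct reading of the conjugation of entries by the transposition matrices gives
\[
\sigma_{12}\colon (r_1,r_2,r_3;\,x_1,x_2,x_3)\longmapsto (r_2,r_1,r_3;\,x_2,x_1,\bar x_3),
\]
and symmetrically
\[
\sigma_{23}\colon (r_1,r_2,r_3;\,x_1,x_2,x_3)\longmapsto (r_1,r_3,r_2;\,\bar x_1,x_3,x_2).
\]

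For the second stage I would substitute these into the explicit formulae for $\tr(A^2)$ and $\det(A)$. The decisive input is that in a composition algebra the norm satisfies $x\bar x=\bar x x\in\mbC$, so the conjugation of the single entry $x_3$ (resp. $x_1$) is invisible to the quadratic terms $x_i\bar x_i$ and to the products $r_i x_i\bar x_i$, while the symmetric functions $\sum_i r_i^2$ and $r_1r_2r_3$ are manifestly preserved. The only point requiring attention is the cubic cross-term
\[
x_1x_3\bar x_2+\bar x_2x_1x_3+x_3\bar x_2x_1+x_2\bar x_3\bar x_1+\bar x_3\bar x_1x_2+\bar x_1x_2\bar x_3,
\]
and one checks that $\sigma_{12}$, and likewise $\sigma_{23}$, merely permutes its six monomials. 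This yields $\tr(\sigma_{ij}(A)^2)=\tr(A^2)$ and $\det(\sigma_{ij}(A))=\det(A)$, so $\sigma_{12},\sigma_{23}\in\SO_3(\mbA)$.

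For the last stage I would realise $\langle\sigma_{12},\sigma_{23}\rangle$ as the image of a representation of $\mathfrak S_3$. The $3\times3$ permutation matrices form a group isomorphic to $\mathfrak S_3$, generated by $M_{12}$ and $M_{23}$, and I claim that $P\mapsto(A\mapsto PAP^{\,T})$ is a homomorphism $\mathfrak S_3\to\GL_\mbC(\SJ_3(\mbA))$ whose image is exactly $\langle\sigma_{12},\sigma_{23}\rangle$. The homomorphism property $\sigma_P\circ\sigma_Q=\sigma_{PQ}$ amounts to $P(QAQ^{\,T})P^{\,T}=(PQ)A(PQ)^{\,T}$; although matrix multiplication over $\mbO_\mbC$ is not associative in general, every reassociation here involves at most the single $\mbA$-valued factor $A$ together with central ($\mbC$-valued) permutation matrices, and such products associate freely, so the identity holds. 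The representation is faithful: restricting to the diagonal matrices $\diag(r_1,r_2,r_3)$ it is the standard permutation action of $\mathfrak S_3$ on $\mbC^3$, which has trivial kernel. Hence $\langle\sigma_{12},\sigma_{23}\rangle\cong\mathfrak S_3$.

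I expect the main obstacle to be bookkeeping rather than conceptual, and to be concentrated in the case $\mbA=\mbO_\mbC$: one must ensure that the non-commutativity and non-associativity of the octonions are harmless at every step, which is precisely why the centrality of the norm $x\bar x=\bar x x$ and of the permutation-matrix entries is what makes both the invariance of $\det$ and the homomorphism property go through.
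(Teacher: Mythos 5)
Your proposal is correct and follows essentially the same route as the paper: compute the entrywise effect of $\sigma_{12},\sigma_{23}$ on $(r_i;x_i)$, check invariance of $\tr(A^2)$ and $\det$ directly from the explicit formulae (your observation that the six cubic monomials are permuted, and that the central norm $x\bar x=\bar x x$ absorbs the stray conjugations, is exactly the content of the paper's ``straightforward to check''), and then identify the group with $\mathfrak S_3$ via its faithful action on the diagonal matrices. Your extra care about non-associativity over $\mbO_\mbC$ and the faithfulness argument are welcome elaborations but not a different method.
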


\begin{proof}
For $A = \begin{pmatrix} r_1 & \bar{x}_3 & \bar{x}_2 \\ x_3 & r_2 & \bar{x}_1 \\ x_2 & x_1 & r_3 \end{pmatrix}$, we have
$$
\sigma_{12}(A) = \begin{pmatrix} r_2 & x_3 & \bar{x}_1 \\ \bar{x}_3 & r_1 & \bar{x}_2 \\ x_1 & x_2 & r_3 \end{pmatrix}, \quad \quad \sigma_{23}(A) = \begin{pmatrix} r_1 & \bar{x}_2 & \bar{x}_3 \\ x_2 & r_3 & x_1 \\ x_3 & \bar{x}_1 & r_2 \end{pmatrix}.
$$
Now it is straightforward to check that $\det(\sigma_{12}(A)) = \det (A)$ and $\tr(\sigma_{12}(A)^2) = \tr(A^2)$ by using previous explicit formulae; hence $\sigma_{12} \in \SO_3(\mbA)$. Similarly, we can show $\sigma_{23} \in \SO_3(\mbA)$.  By regarding $\sigma_{12}$ and $\sigma_{23}$ as the permutations $(12)$ and $(23)$ in $\mathfrak{S}_3$, respectively, we have $\langle\sigma_{12},\sigma_{23}\rangle=\mathfrak{S}_3\subset\SO_3(\mbA)$.
\end{proof}

\begin{rmk}\label{r.DefineSigma}
Note that $\sigma\cdot\diag(r_1, r_2, r_3)=\diag(r_{\sigma(1)}, r_{\sigma(2)}, r_{\sigma(3)})\in\SJ_3(\mbA)$ for any $\sigma\in\mathfrak{S}_3$ and any diagonal matrix $\diag(r_1, r_2, r_3)\in\SJ_3(\mbA)$.
\end{rmk}

\subsection{The involution on Lie algebras}

There exists an involution $\theta\colon \SL_3(\mbA) \to \SL_3(\mbA)$ coming from the symmetry of the Dynkin diagram of $\SL_3(\mbA)$, which satisfies $\SL_3(\mbA)^\theta = \SO_3(\mbA)$. The quotient $\SL_3(\mbA)/\SO_3(\mbA)$ is a symmetric homogeneous space.  The involution $\theta$ induces an involution (still denoted by $\theta$) on $\mathfrak{sl}_3(\mbA)$ whose fixed locus is $\mathfrak{so}_3(\mbA)$.  We have the following description of these Lie algebras:
\begin{align*}
\mathfrak{sl}_3(\mbA) &= \{ \phi \in \End(\sJ_3(\mbA))| (\phi(B), B, B) =0, \forall B \in \sJ_3(\mbA)\}. \\
\mathfrak{so}_3(\mbA) & =\{ \psi \in \End (\sJ_3(\mbA))| \psi(B \circ C) = \psi (B) \circ C + B \circ \psi(C), \forall B, C  \in \sJ_3(\mbA) \}.
\end{align*}

The following result is well known, but we include a proof here for the reader's convenience.

\begin{lem} \label{l.ThetaDecomposition}
Let $\SJ_3(\mbA)_0$ be the vector subspace of $\SJ_3(\mbA)$ consisting of traceless elements.
\begin{enumerate}
\item\label{l.TD-1} The map $\mu\colon \SJ_3(\mbA)_0 \to \End(\sJ_3(\mbA))$ given by $A \mapsto [B \mapsto 2A \circ B]$ embeds $\SJ_3(\mbA)_0$ into $\mathfrak{sl}_3(\mbA)$.

\item\label{l.TD-2} The involution $\theta$ acts on $\SJ_3(\mbA)_0$ by $-1$, and we have the decomposition into $\theta$-eigenvector spaces
$\mathfrak{sl}_3(\mbA) = \mathfrak{so}_3(\mbA) \oplus \SJ_3(\mbA)_0$.
\end{enumerate}
\end{lem}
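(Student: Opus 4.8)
The plan is to establish the two parts in turn, using the trilinear form $(\cdot,\cdot,\cdot)$, the comatrix identities recorded above, and the associativity of the trace form $T(A,B):=\tr(A\circ B)$. For part~(\ref{l.TD-1}), membership $\mu(A)\in\mathfrak{sl}_3(\mbA)$ is by definition the vanishing of $(\mu(A)(B),B,B)=2(A\circ B,B,B)$ for all $B$. Since $(X,Y,Z)=\tr(X\circ(Y\times Z))$ and $B\times B=\com(B)$, I would rewrite
\[
(A\circ B,B,B)=\tr\bigl((A\circ B)\circ\com(B)\bigr)=T\bigl(A\circ B,\com(B)\bigr).
\]
Associativity of $T$ turns this into $T\bigl(A,B\circ\com(B)\bigr)$, and the comatrix identity $B\circ\com(B)=\det(B)\Id$ then gives $\det(B)\,T(A,\Id)=\det(B)\,\tr(A)=0$, because $A$ is traceless. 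Hence $\mu(A)\in\mathfrak{sl}_3(\mbA)$, and $\mu$ is injective since $\mu(A)=0$ forces $2A\circ\Id=2A=0$. Associativity of $T$ is elementary ($T(A,B)=\tr(AB)$ together with cyclicity) for $\mbA\in\{\mbC,\mbC\oplus\mbC,\mbH_\mbC\}$; for the Albert algebra $\SJ_3(\mbO_\mbC)$ it is the standard associativity of the generic trace form, and this is the single input that does not follow from naive matrix manipulation.

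For part~(\ref{l.TD-2}), I would first record two adjointness facts relative to the nondegenerate form $T$. Each derivation $D\in\mathfrak{so}_3(\mbA)$ is $T$-skew: being tangent to Jordan automorphisms it annihilates the generic trace, so $T(DB,C)+T(B,DC)=\tr\bigl(D(B\circ C)\bigr)=0$. Each multiplication $\mu(A)$ is $T$-self-adjoint, since $T(\mu(A)B,C)=2\tr\bigl((A\circ B)\circ C\bigr)$ is symmetric in $B$ and $C$ by associativity of $T$. Next I would verify that $\mathfrak{sl}_3(\mbA)$ is stable under the adjoint $\phi\mapsto\phi^\dagger$: writing the defining condition as $T(\phi C,\com(C))=0$ for all $C$ and substituting $C\mapsto\com(C)$, the adjoint identity $\com(\com(C))=\det(C)\,C$ yields $\det(C)\,T\bigl(\phi(\com(C)),C\bigr)=0$; as $\det$ is not the zero polynomial this forces $T\bigl(C,\phi(\com(C))\bigr)=T(\phi^\dagger C,\com(C))=0$ identically, i.e. $\phi^\dagger\in\mathfrak{sl}_3(\mbA)$. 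Therefore $\phi\mapsto-\phi^\dagger$ is an involutive automorphism of $\mathfrak{sl}_3(\mbA)$ whose $(+1)$- and $(-1)$-eigenspaces are its $T$-skew and $T$-self-adjoint parts, and these are orthogonal complements for the Killing form $\kappa$.

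It then remains to match these eigenspaces with $\mathfrak{so}_3(\mbA)$ and $\mu(\SJ_3(\mbA)_0)$. The inclusions $\mathfrak{so}_3(\mbA)\subseteq\{T\text{-skew}\}$ and $\mu(\SJ_3(\mbA)_0)\subseteq\{T\text{-self-adjoint}\}$ are already in hand, and the skew and self-adjoint parts are complementary in $\mathfrak{sl}_3(\mbA)$; comparing dimensions via the table, where $\dim\mathfrak{so}_3(\mbA)+\dim\SJ_3(\mbA)_0=\dim\mathfrak{sl}_3(\mbA)$ in each of the four cases, forces both inclusions to be equalities. This gives the vector-space decomposition $\mathfrak{sl}_3(\mbA)=\mathfrak{so}_3(\mbA)\oplus\mu(\SJ_3(\mbA)_0)$, with the two summands being the $\kappa$-orthogonal eigenspaces of $-(\cdot)^\dagger$. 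Finally, since an involution of a semisimple Lie algebra is determined by its fixed subalgebra (the $(-1)$-eigenspace being the $\kappa$-orthocomplement of the fixed part), the given $\theta$ must coincide with $-(\cdot)^\dagger$; in particular $\theta$ acts by $-1$ on the self-adjoint summand $\mu(\SJ_3(\mbA)_0)$, which finishes part~(\ref{l.TD-2}). The main obstacle throughout is the associativity of the generic trace form in the octonionic case, which underlies both the computation in part~(\ref{l.TD-1}) and the self-adjointness of the multiplications in part~(\ref{l.TD-2}); everything else is linear algebra combined with the Jordan identities already displayed and the dimension count from the table.
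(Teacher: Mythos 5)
Your proof of part~(\ref{l.TD-1}) is exactly the paper's: both reduce $(A\circ B,B,B)$ to $\det(B)\tr(A)$ via the associativity of the trace form $\tr((X\circ Y)\circ Z)=\tr(X\circ(Y\circ Z))$ and the identity $B\circ\com(B)=\det(B)\Id$, and both get injectivity from $\mu(A)(\Id)=2A$. For part~(\ref{l.TD-2}) you take a genuinely different route. The paper argues directly that $\mu(\SJ_3(\mbA)_0)\cap\mathfrak{so}_3(\mbA)=0$ by plugging $B=C=\Id$ into the derivation identity, concludes $\mathfrak{sl}_3(\mbA)=\mathfrak{so}_3(\mbA)\oplus\SJ_3(\mbA)_0$ by the same dimension count you use, and then asserts (rather tersely) that the second summand is the $(-1)$-eigenspace of $\theta$. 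You instead build the involution explicitly as $\phi\mapsto-\phi^\dagger$ for the adjoint with respect to $T(A,B)=\tr(A\circ B)$: you check that $\mathfrak{so}_3(\mbA)$ is $T$-skew, that $\mu(A)$ is $T$-self-adjoint, that $\mathfrak{sl}_3(\mbA)$ is $\dagger$-stable (a nice use of $\com(\com(C))=\det(C)C$), and then identify this involution with $\theta$ by the uniqueness of an involution of a semisimple Lie algebra with prescribed fixed subalgebra. Your version costs more machinery (Killing-form orthogonality of eigenspaces, the uniqueness argument) but buys an explicit formula for $\theta$ and, in particular, supplies the justification that $\mu(\SJ_3(\mbA)_0)$ really is $\theta$-stable and is the full $(-1)$-eigenspace --- a point the paper's last sentence passes over quickly. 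Both arguments ultimately rest on the same dimension identity $\dim\mathfrak{so}_3(\mbA)+\dim\SJ_3(\mbA)_0=\dim\mathfrak{sl}_3(\mbA)$ and on the associativity of the trace form, which, as you correctly flag, is the one nontrivial input in the octonionic case. I see no gap in your argument.
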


\begin{proof}
For $A, B, C\in\SJ_3(\mbA)$, it is straightforward to show that
$$
\tr((A \circ B)\circ C) = \tr(A \circ (B \circ C)).
$$

Taking $A\in\SJ_3(\mbA)_0$ and $B\in\SJ_3(\mbA)$, one gets
\begin{eqnarray*}
 (A\circ B, B, B)=\tr((A\circ B)\circ\com(B))=\tr(A\circ (B\circ \com(B)))= \det(B)\tr(A)=0.
\end{eqnarray*}
It follows that $\mu(A)\in \mathfrak{sl}_3(\mbA)$. As $\mu(A)(\Id)=2A$, the map $\mu$ is injective. This shows~\eqref{l.TD-1}.

For~\eqref{l.TD-2}, we first check $\mu(\SJ_3(\mbA)_0) \cap \mathfrak{so}_3(\mbA) =0$.  Assume $\mu(A) \in \mathfrak{so}_3(\mbA)$; then $A \circ (B \circ C) = (A \circ B) \circ C + B \circ (A \circ C)$ for all $B, C$. We may take $B=C=\Id$, which shows that $A=0$.  By a dimension check, we get $\mathfrak{sl}_3(\mbA) = \mathfrak{so}_3(\mbA) \oplus \SJ_3(\mbA)_0$.  As the first part is the $\theta$-eigenspace of 1, the second part is the $\theta$-eigenspace of~$-1$.
\end{proof}

The subspace $\SJ_3(\mbA)_0$ is not necessarily a Lie subalgebra of $ \mathfrak{sl}_3(\mbA)$. On the other hand, we do have the following result.

\begin{lem}\label{l.T0torus}
  \leavevmode
  \begin{enumerate}
    \item\label{l.TO-1} The subset $T_0:=\{\diag(\lambda_1, \lambda_2, \lambda_3)\mid \lambda_1 \lambda_2 \lambda_3=1\}$ of $\SJ_3(\mbA)$ is a group under the Jordan algebra structure of $\SJ_3(\mbA)$, which is isomorphic as a group to the $2$-dimensional torus $(\mathbb{C}^*)^2$.

\item\label{l.TO-2} The map
\begin{alignat*}{2}
&\nu\colon  &T_0&\lra\SL_3(\mbA) \\
&& A&\longmapsto [B\longmapsto ABA]
\end{alignat*}
is an injective homomorphism of groups, and the associated homomorphism of Lie algebras is $\mu|_{\fh_0}\colon \fh_0\to \mathfrak{sl}_3(\mbA)$, where $\fh_0 = \{\diag(t_1, t_2, t_3)\mid t_1+t_2+t_3=0\}  \subset \SJ_3(\mbA)_0$.
In particular, \textit{via} $\nu$ and $\mu$ we can regard $T_0$ as a 2-dimensional torus of\, $\SL_3(\mbA)$ with Lie algebra $\fh_0\subset\mathfrak{sl}_3(\mbA)$.

\item\label{l.TO-3} Take any $\sigma\in\mathfrak{S}_3\subset\SO_3(\mbA)$. Then the inner automorphism and the adjoint representation of\, $\SL_3(\mbA)$ give rise to $\Inn_{\sigma}(T_0)=T_0$ and $\Ad_{\sigma}(\fh_0)=\fh_0$. More precisely,
\begin{alignat*}{2}
&\Inn_\sigma\colon &  T_0 &\longrightarrow T_0  \\
&& \diag(\lambda_1, \lambda_2, \lambda_3)&\longmapsto\diag(\lambda_{\sigma(1)}, \lambda_{\sigma(2)}, \lambda_{\sigma(3)}), \\
&\Ad_\sigma\colon & \fh_0&\longrightarrow\fh_0 \\
&& \diag(t_1, t_2, t_3)&\longmapsto\diag(t_{\sigma(1)}, t_{\sigma(2)}, t_{\sigma(3)}).
\end{alignat*}
\end{enumerate}
\end{lem}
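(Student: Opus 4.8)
The plan is to dispatch the three parts in order, reducing everything to explicit matrix computations that exploit the fact that the $\lambda_i$ are central scalars lying in $\mbC\subset\mbA$ and hence fixed by the conjugation of $\mbA$.

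For part~\eqref{l.TO-1}, I would first note that for diagonal matrices the Jordan product coincides with the ordinary commuting product, so that
$\diag(\lambda_1,\lambda_2,\lambda_3)\circ\diag(\mu_1,\mu_2,\mu_3)=\diag(\lambda_1\mu_1,\lambda_2\mu_2,\lambda_3\mu_3)$,
i.e. componentwise multiplication. The condition $\lambda_1\lambda_2\lambda_3=1$ is manifestly preserved, $\Id$ is the unit, and the inverse of $\diag(\lambda_1,\lambda_2,\lambda_3)$ is $\diag(\lambda_1^{-1},\lambda_2^{-1},\lambda_3^{-1})$. Then $\diag(\lambda_1,\lambda_2,\lambda_3)\mapsto(\lambda_1,\lambda_2)$ is a group isomorphism onto $(\mbC^*)^2$, with inverse $(\lambda_1,\lambda_2)\mapsto\diag(\lambda_1,\lambda_2,(\lambda_1\lambda_2)^{-1})$.

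For part~\eqref{l.TO-2}, I would first check that $ABA\in\SJ_3(\mbA)$ for $A\in T_0$ and $B\in\SJ_3(\mbA)$: writing out $ABA$ explicitly, the diagonal entry $r_i$ becomes $\lambda_i^2 r_i$ while the off-diagonal entry carrying $x_k$ (joining indices $i,j$) is scaled by the central factor $\lambda_i\lambda_j$; since these factors are fixed by conjugation, Hermitian symmetry survives. The crucial computation is $\det(ABA)=\det(B)$: in the explicit determinant formula, the term $r_1r_2r_3$, each term $r_ix_i\bar x_i$, and each cubic term $x_1x_3\bar x_2+\cdots$ all pick up exactly the factor $(\lambda_1\lambda_2\lambda_3)^2=1$, so $\det$ is unchanged and $\nu(A)\in\SL_3(\mbA)$. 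That $\nu$ is a homomorphism follows because $A,A'$ commute: $\nu(A)\nu(A')(B)=AA'BA'A=(A\circ A')B(A\circ A')=\nu(A\circ A')(B)$. Injectivity follows by evaluating $\nu(A)=\id$ on $\Id$ and on elementary off-diagonal Hermitian matrices, which forces $\lambda_i^2=1$ and $\lambda_i\lambda_j=1$, whence all $\lambda_i=1$. For the Lie algebra claim I identify $\mathrm{Lie}(T_0)=\fh_0$ by differentiating the one-parameter subgroup $s\mapsto\diag(e^{st_1},e^{st_2},e^{st_3})$ with $t_1+t_2+t_3=0$; then
$\tfrac{d}{ds}\big|_{s=0}\,A(s)BA(s)=\diag(t_1,t_2,t_3)B+B\diag(t_1,t_2,t_3)=2\,\diag(t_1,t_2,t_3)\circ B$,
which is precisely $\mu(\diag(t_1,t_2,t_3))(B)$. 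Hence $d\nu=\mu|_{\fh_0}$, and since $\nu$ is an injective homomorphism its image is a $2$-torus in $\SL_3(\mbA)$ with Lie algebra $\mu(\fh_0)$, justifying the identification of $T_0$ as a torus with Lie algebra $\fh_0$.

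For part~\eqref{l.TO-3}, the key identity is $\Inn_\sigma(\nu(A))=\nu(\sigma\cdot A)$. Writing $\sigma$ as conjugation $B\mapsto M_\sigma B M_\sigma^{-1}$ by the associated permutation matrix $M_\sigma$ (compatible with Lemma~\ref{l.DefineSigma}), a direct computation gives
$(\sigma\circ\nu(A)\circ\sigma^{-1})(B)=M_\sigma\,(A\,M_\sigma^{-1}BM_\sigma\,A)\,M_\sigma^{-1}=(M_\sigma A M_\sigma^{-1})B(M_\sigma A M_\sigma^{-1})=\nu(\sigma\cdot A)(B)$.
By Remark~\ref{r.DefineSigma}, $\sigma\cdot\diag(\lambda_1,\lambda_2,\lambda_3)=\diag(\lambda_{\sigma(1)},\lambda_{\sigma(2)},\lambda_{\sigma(3)})\in T_0$, yielding $\Inn_\sigma(T_0)=T_0$ with the stated formula; differentiating this relation (equivalently, applying $\mu$ and the permutation action on $\fh_0$) gives $\Ad_\sigma(\fh_0)=\fh_0$ with the permutation action on $\diag(t_1,t_2,t_3)$. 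I expect the only genuinely delicate step to be the determinant invariance $\det(ABA)=\det(B)$ in part~\eqref{l.TO-2}, where one must track the non-commutative cubic terms and verify each scales by $(\lambda_1\lambda_2\lambda_3)^2$; every other step becomes formal once one observes that the $\lambda_i$ are central scalars fixed by conjugation, so they factor out of every $\mbA$-valued expression.
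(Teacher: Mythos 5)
Your proposal is correct and follows essentially the same route as the paper: componentwise multiplication for part (i), evaluation of $\nu(A)=\id$ on test matrices for injectivity in part (ii), and the identity $\Inn_\sigma(\nu(A))=\nu(\sigma\cdot A)$ via the permutation matrices for part (iii). In fact you supply some details the paper leaves implicit (that $\nu(A)$ preserves $\det$ and hence lands in $\SL_3(\mbA)$, the homomorphism property, and the explicit differentiation giving $d\nu=\mu|_{\fh_0}$), and your computations there are correct, the $\lambda_i$ being central scalars so that no associativity issue arises even for $\mbA=\mbO_\mbC$.
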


\begin{proof}
  \begin{enumerate}[wide]
\item One has $A_1\circ A_2=A_1A_2=A_2A_1$ for $A_1, A_2\in T_0$. Hence $T_0$ is an abelian group. It follows that the group structure is the same as that of the $2$-dimensional torus.

\item Assume $A\in\ker(\nu)$; then $B= ABA$ for any $B \in  \SJ_3(A)$, which implies that
 $A^2=\Id$ and $BA=(ABA)A=AB$. This implies $A={\pm\rm Id}$. Since $A\in T_0$, we get $A=\Id$. The assertion on Lie algebras follows immediately.

\item For $A=\diag(\lambda_1, \lambda_2, \lambda_3)\in T_0$ (viewed as an element in $\SL_3(\mbA)$ \textit{via} $\nu$) and $B\in\SJ_3(\mbA)$, we have
  $$\Inn_{\sigma_{12}}(A) (B)=(\sigma_{12}A\sigma_{12})(B) =\sigma_{12}A(M_{12}BM_{12}) = M_{12}AM_{12}BM_{12}AM_{12}=\sigma_{12}(A)\cdot B.$$   It follows that $\Inn_{\sigma_{12}}(A)=\sigma_{12}(A)=\diag(\lambda_2, \lambda_1, \lambda_3)\in T_0$. Similarly, we have $\Inn_{\sigma_{23}}(A)=\sigma_{23}(A)=\diag(\lambda_1, \lambda_3, \lambda_2)\in T_0$. Consequently, $\Inn_{\sigma}(A)=\diag(\lambda_{\sigma(1)}, \lambda_{\sigma(2)}, \lambda_{\sigma(3)})\in T_0$ for any $\sigma\in\mathfrak{S}_3$.
\qedhere  \end{enumerate}
\end{proof}

\begin{lem} \label{l.CartanSA}
Let\, $\fl=\{v \in \mathfrak{sl}_3(\mbA) | [v, \fh_0] = 0 \}$  be the centralizer of\, $\fh_0$. Then there exists a Cartan subalgebra $\fh$ of $\mathfrak{sl}_3(\mbA)$ such that $\fh_0\subset\fh\subset\fl$, $\theta (\fh) = \fh$, $\fh_0 = \fh \cap \SJ_3(\mbA)_0$, and if moreover $\mbA\neq\mbC$, then $\fh \cap \mathfrak{so}_3(\mbA)$ is a Cartan subalgebra of $\mathfrak{so}_3(\mbA)$.
\end{lem}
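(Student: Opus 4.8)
\emph{The plan} is to build a $\theta$-stable, maximally split Cartan subalgebra of $\fg:=\mathfrak{sl}_3(\mbA)$, i.e. one whose $(-1)$-eigenspace under $\theta$ is exactly $\fh_0$. Write $\fk:=\mathfrak{so}_3(\mbA)$ and $\fp:=\SJ_3(\mbA)_0$, so that $\fg=\fk\oplus\fp$ is the $\theta$-eigenspace decomposition of Lemma~\ref{l.ThetaDecomposition}. By Lemma~\ref{l.T0torus} the subspace $\fh_0$ is the Lie algebra of the torus $T_0\subset\SL_3(\mbA)$, hence consists of semisimple elements; and $\fh_0\subset\fp$ is $\theta$-stable since $\theta|_\fp=-\id$. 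Consequently $\fl=\mathfrak z_\fg(\fh_0)$ is a $\theta$-stable reductive subalgebra inheriting the decomposition $\fl=(\fl\cap\fk)\oplus(\fl\cap\fp)$, and $\fm:=\fl\cap\fk=\mathfrak z_\fk(\fh_0)=\fl^\theta$ is reductive.

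The heart of the matter is to show that $\fh_0$ is a maximal abelian subspace of $\fp$, equivalently that $\fl\cap\fp=\fh_0$. For this I would compute the bracket on $\fp$ through the embedding $\mu$ of Lemma~\ref{l.ThetaDecomposition}: for $A\in\SJ_3(\mbA)_0$ and $D=\diag(t_1,t_2,t_3)\in\fh_0$ one has $[\mu(A),\mu(D)](B)=4\bigl(A\circ(D\circ B)-D\circ(A\circ B)\bigr)$ for all $B\in\SJ_3(\mbA)$. Because the entries $t_i$ lie in the centre $\mbC$ of $\mbA$, every product occurring in this expression involves at most two octonionic entries, which generate an associative subalgebra; so the computation is identical to the associative case, where the operator collapses to $\mathrm{ad}_{[A,D]}\colon B\mapsto[[A,D],B]$ with $[A,D]_{jk}=(t_k-t_j)A_{jk}$. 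Since $[A,D]$ has vanishing diagonal, this operator annihilates $\SJ_3(\mbA)$ only if $[A,D]=0$; taking the $t_i$ pairwise distinct then forces $A_{jk}=0$ for $j\neq k$, i.e. $A\in\fh_0$. Hence $\fl\cap\fp=\fh_0$. I expect this computation, and in particular its octonionic instance, to be the main obstacle.

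Granting $\fl\cap\fp=\fh_0$, I would enlarge $\fh_0$ to a Cartan subalgebra. Choose a Cartan subalgebra $\mathfrak t$ of the reductive Lie algebra $\fm$ and set $\fh:=\fh_0\oplus\mathfrak t$. Then $\fh$ is abelian (as $\mathfrak t\subset\fm$ centralizes $\fh_0$), consists of semisimple elements, and is $\theta$-stable (with $\mathfrak t\subset\fk$, $\fh_0\subset\fp$). To see it is self-centralizing, note $\mathfrak z_\fg(\fh)=\mathfrak z_\fl(\mathfrak t)$; since $\fl=\fm\oplus\fh_0$ with $\fh_0$ central in $\fl$, this equals $\mathfrak z_\fm(\mathfrak t)\oplus\fh_0=\mathfrak t\oplus\fh_0=\fh$, using that $\mathfrak t$ is a Cartan subalgebra of $\fm$. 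A self-centralizing toral subalgebra is a Cartan subalgebra, so $\fh$ is a Cartan subalgebra of $\fg$; it lies in $\fl$ and satisfies $\theta(\fh)=\fh$. As $\fh_0\oplus\mathfrak t$ is precisely the $\theta$-eigenspace decomposition of $\fh$, we get $\fh\cap\fp=\fh_0$, that is $\fh_0=\fh\cap\SJ_3(\mbA)_0$.

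Finally I would identify $\fh\cap\fk=\mathfrak t$ when $\mbA\neq\mbC$. Since $\fh$ is a Cartan subalgebra, $\dim\mathfrak t=\dim\fh-\dim\fh_0=\rank\fg-2$, and the toral subalgebra $\mathfrak t\subset\fk$ is a maximal torus of $\fk$ as soon as $\dim\mathfrak t=\rank\fk$. Reading the ranks off the table of $\bigl(\SL_3(\mbA),\SO_3(\mbA)\bigr)$ gives $\rank\fg-\rank\fk=2$ exactly for $\mbA\in\{\mbC\oplus\mbC,\mbH_\mbC,\mbO_\mbC\}$ (the difference being $1$ when $\mbA=\mbC$, which is why that case is excluded). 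Hence $\dim\mathfrak t=\rank\fk$, so $\mathfrak t=\fh\cap\mathfrak{so}_3(\mbA)$ is a Cartan subalgebra of $\mathfrak{so}_3(\mbA)$, completing the proof.
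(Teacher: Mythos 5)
Your proposal is correct and follows essentially the same route as the paper: the crux in both is the identity $\fl\cap\SJ_3(\mbA)_0=\fh_0$, proved by the same bracket computation (the paper writes out the $3\times3$ matrix $ABC+CBA-BAC-CAB$ with diagonal test elements, while you package it as $\mathrm{ad}_{[A,D]}$ using centrality of the diagonal entries), followed by the same extension of $\fh_0$ by a Cartan subalgebra of $\fl\cap\mathfrak{so}_3(\mbA)$ and the same rank count.
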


\begin{proof}
By Lemma~\ref{l.T0torus}, $\fh_0$ is the Lie algebra of a torus in $\SL_3(\mbA)$. Since $\fh_0$ is $\theta$-stable, $\fl$ is $\theta$-stable. There is a direct sum decomposition into $\theta$-eigenspaces $\fl=(\fl\cap\mathfrak{so}_3(\mbA))\oplus(\fl\cap\SJ_3(\mbA)_0)$. We claim that $\fl\cap\SJ_3(\mbA)_0=\fh_0$. First assume this claim. Let $\fh_1$ be any Cartan subalgebra of $\fl\cap\mathfrak{so}_3(\mbA)$; then by the claim above, $\fh:=\fh_1\oplus\fh_0$ is a Cartan subalgebra of $\fl$ (hence  of $\mathfrak{sl}_3(\mbA)$). In particular, $\fh\cap\mathfrak{so}_3(\mbA)=\fh_1\simeq\fh/\fh_0$, and thus $\dim(\fh\cap\mathfrak{so}_3(\mbA))=\rank(\mathfrak{sl}_3(\mbA))-2$. When $\mbA\neq\mbC$, we have $\rank(\mathfrak{sl}_3(\mbA))=\rank(\mathfrak{so}_3(\mbA))+2$, and thus $\fh\cap\mathfrak{so}_3(\mbA)$ is a Cartan subalgebra of $\mathfrak{so}_3(\mbA)$. So $\fh$ is the required Cartan subalgebra of $\mathfrak{sl}_3(\mbA)$.

Now we turn to verifying the claim $\fl\cap\SJ_3(\mbA)_0=\fh_0$. Take $A\in \fl\cap\SJ_3(\mbA)_0$. Then for any $B\in\fh_0$ and $C\in \SJ_3(\mbA)$, we have $[\mu(A), \mu(B)](C)=0$, which implies that $D:=ABC+CBA-BAC-CAB=0$. We write
$$
A = \begin{pmatrix} r_1 & \bar{x}_3 & \bar{x}_2 \\ x_3 & r_2 & \bar{x}_1 \\ x_2 & x_1 & r_3 \end{pmatrix}.
$$
Take $B=\diag(b_1, b_2, b_3)\in\fh_0$ and  $C=\diag(c_1, c_2, c_3)\in \SJ_3(\mbA)$. Then $$D = \begin{pmatrix} 0 &(b_1-b_2)(c_1-c_2) \bar{x}_3 & (b_1-b_3)(c_1-c_3)\bar{x}_2 \\ (b_1-b_2)(c_1-c_2)x_3 & 0 & (b_2-b_3)(c_2-c_3)\bar{x}_1 \\ (b_1-b_3)(c_1-c_3)x_2 & (b_2-b_3)(c_2-c_3)x_1 & 0 \end{pmatrix}.$$ Varying $B$ and $C$, we get $x_1=x_2=x_3=0$ and $A=\diag(r_1, r_2, r_3)$. As $A\in\SJ_3(\mbA)_0$, one has $r_1+r_2+r_3=0$ and $A\in\fh_0$, which concludes the proof.
\end{proof}

\begin{lem} \label{l.Cartan2}
Let $\fh_1$ be a Cartan subalgebra of $\mathfrak{so}_3(\mbA)$, and let $\fh=\{v \in \mathfrak{sl}_3(\mbA) | [v, \fh_1] = 0 \}$ be the centralizer of $\fh_1$ in $\mathfrak{sl}_3(\mbA)$. Then $\fh$ is a Cartan subalgebra of $\mathfrak{sl}_3(\mbA)$, $\fh_1 \subset \fh$, $\theta(\fh)=\fh$, $\fh_1 = \fh \cap \mathfrak{so}_3(\mbA)$ and
$$\dim(\fh/\fh_1) =
\begin{cases}   1  & \text{if} \  \mbA = \mbC, \\
2 & \text{otherwise}.
\end{cases}
$$
\end{lem}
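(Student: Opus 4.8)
The plan is to work with the $\theta$-eigenspace decomposition $\mathfrak{sl}_3(\mbA) = \mathfrak{so}_3(\mbA) \oplus \SJ_3(\mbA)_0$ of Lemma~\ref{l.ThetaDecomposition} and to show that the centralizer in question is forced to be abelian, hence a Cartan subalgebra. Write $\fg = \mathfrak{sl}_3(\mbA)$, $\fk = \mathfrak{so}_3(\mbA)$ and $\fp = \SJ_3(\mbA)_0$, so that $\fg = \fk \oplus \fp$ with $\fk = \fg^\theta$, and recall $\fh = Z_\fg(\fh_1)$. The three easy assertions come first. Since $\fh_1$ is abelian it centralizes itself, giving $\fh_1 \subseteq \fh$. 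Since $\fh_1 \subseteq \fk = \fg^\theta$ is $\theta$-fixed, for $v \in \fh$ one computes $[\theta(v), \fh_1] = \theta([v,\fh_1]) = 0$, so $\theta(\fh) = \fh$ and $\fh = (\fh\cap\fk)\oplus(\fh\cap\fp)$. Finally $\fh\cap\fk$ is exactly the centralizer $Z_\fk(\fh_1)$; as $\fh_1$ is a Cartan subalgebra of the reductive algebra $\fk$ it is self-centralizing there, so $\fh\cap\fk = \fh_1$, which is the asserted identity $\fh_1 = \fh\cap\mathfrak{so}_3(\mbA)$.

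The heart of the argument is to show $\fh$ is a Cartan subalgebra of $\fg$; equivalently, that the toral subalgebra $\fh_1$ is regular in $\fg$, so that its centralizer picks up no extra root spaces. Being a Cartan subalgebra of the semisimple algebra $\fk = \mathfrak{so}_3(\mbA)$, $\fh_1$ is the Lie algebra of a torus of $\SO_3(\mbA) \subseteq \SL_3(\mbA)$ and hence consists of elements that are semisimple in $\fg$; therefore $\fh = Z_\fg(\fh_1)$ is a reductive subalgebra of $\fg$. The key point is that $\fh_1 = \fh\cap\fk = \fh^\theta$ is central in $\fh$ (because $\fh$ centralizes $\fh_1$). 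Decomposing the reductive algebra into center and derived part $\fh = \mathfrak{z}(\fh)\oplus[\fh,\fh]$ — a $\theta$-stable decomposition with $[\fh,\fh]$ semisimple — I obtain $[\fh,\fh]^\theta = [\fh,\fh]\cap\fk \subseteq \fh\cap\fk = \fh_1 \subseteq \mathfrak{z}(\fh)$, whence $[\fh,\fh]^\theta \subseteq [\fh,\fh]\cap\mathfrak{z}(\fh) = 0$. Since a nonzero semisimple Lie algebra cannot have trivial fixed subalgebra under an involution (otherwise it would coincide with its $(-1)$-eigenspace and be abelian), this forces $[\fh,\fh] = 0$, i.e. $\fh$ is abelian.

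Once $\fh$ is abelian the conclusion is immediate: $\fh_1$ is toral, hence contained in some Cartan subalgebra $\fc$ of $\fg$, and then $\fc \subseteq Z_\fg(\fh_1) = \fh$; as $\fh$ is abelian and contains $\fc$ it centralizes $\fc$, so $\fh \subseteq Z_\fg(\fc) = \fc$ and $\fh = \fc$ is a Cartan subalgebra. The remaining claims then follow formally: $\theta$-stability and $\fh_1 = \fh\cap\fk$ are already established, and $\dim(\fh/\fh_1) = \dim(\fh\cap\fp) = \rank(\mathfrak{sl}_3(\mbA)) - \rank(\mathfrak{so}_3(\mbA))$, which equals $1$ when $\mbA = \mbC$ (since $\rank(\mathfrak{so}_3)=1$) and $2$ otherwise, exactly via the rank comparison used in Lemma~\ref{l.CartanSA}. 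I expect the only delicate step to be the abelianness of $\fh$: a priori the centralizer of $\fh_1$ could be a strictly larger Levi subalgebra, and it is precisely the involution $\theta$ — through the constraint $\fh^\theta = \fh_1 \subseteq \mathfrak{z}(\fh)$ — that rules this out uniformly in $\mbA$, including the case $\mbA = \mbC$ where the relevant ranks differ by $1$ rather than by $2$.
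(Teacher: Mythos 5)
Your proof is correct, and it takes a genuinely different route from the paper's. The paper first builds a Cartan subalgebra $\fk$ of $\mathfrak{sl}_3(\mbA)$ inside $\fh$ (as the preimage of a Cartan subalgebra of $\fh/\fh_1$), notes $\dim(\fk/\fh_1)=\rank(\mathfrak{sl}_3(\mbA))-\rank(\mathfrak{so}_3(\mbA))$, and then proves $\fh=\fk$ by identifying $\fh\cap\SJ_3(\mbA)_0$ with the zero-weight space of the irreducible $\mathfrak{so}_3(\mbA)$-module $\SJ_3(\mbA)_0$ and computing its multiplicity case by case from the table of highest weights taken from \cite{R10}. You instead argue structurally: the centralizer $\fh=Z_{\fg}(\fh_1)$ of a toral subalgebra is reductive, its $\theta$-fixed part is $\fh_1\subseteq\mathfrak{z}(\fh)$, so the semisimple part $[\fh,\fh]$ carries an involution with trivial fixed subalgebra and must vanish; abelianness of $\fh$ then forces $\fh$ to coincide with any Cartan subalgebra of $\fg$ containing $\fh_1$. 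Each step checks out (in particular $[\fh,\fh]^\theta=[\fh,\fh]\cap\mathfrak{so}_3(\mbA)\subseteq\fh_1\subseteq\mathfrak{z}(\fh)$, and a nonzero semisimple algebra cannot equal the $(-1)$-eigenspace of an involution since that would make it abelian). What your approach buys is uniformity and independence from the representation-theoretic input: it avoids the irreducibility of $\SJ_3(\mbA)_0$, the highest-weight table, and the zero-weight multiplicity computation, and in fact proves the general statement that for an involution of a semisimple Lie algebra with semisimple fixed subalgebra $\fk$, the centralizer of a Cartan subalgebra of $\fk$ is a $\theta$-stable Cartan subalgebra of the ambient algebra. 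What the paper's computation buys in exchange is the explicit identification of $\fh\cap\SJ_3(\mbA)_0$ as the zero-weight space of $\SJ_3(\mbA)_0$, which is in the same spirit as the weight-space analysis used elsewhere (e.g.\ in the proof of Proposition~\ref{p.reduction}); your argument recovers only its dimension via the rank difference.
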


\begin{proof}
Since $\fh_1$ is $\theta$-stable, its centralizer $\fh=\{v \in \mathfrak{sl}_3(\mbA) | [v, \fh_1] = 0 \}$ is $\theta$-stable. Then there is a direct sum decomposition into $\theta$-eigenspaces $\fh=(\fh\cap\mathfrak{so}_3(\mbA))\oplus(\fh\cap\SJ_3(\mbA)_0)$. Since $\fh\cap\mathfrak{so}_3(\mbA)=\fh_1$, $\theta$ acts on the quotient algebra $\fh/\fh_1$ by $-1$. Let $\bar{\fk}$ be a Cartan subalgebra of $\fh/\fh_1$. Then its preimage $\fk$ in $\mathfrak{sl}_3(\mbA)$ is a Cartan subalgebra of $\mathfrak{sl}_3(\mbA)$ satisfying $\fh_1 \subset \fk$, $\theta(\fk)=\fk$ and $\fh_1 = \fk \cap \mathfrak{so}_3(\mbA)$. Since $\fk$ and $\fh_1$ are Cartan subalgebras of $\mathfrak{sl}_3(\mbA)$ and $\mathfrak{so}_3(\mbA)$, respectively, $\dim(\fk/\fh_1)=\rank(\mathfrak{sl}_3(\mbA))-\rank(\mathfrak{so}_3(\mbA))$. Then $\fk/\fh_1$ has the dimension as stated. It remains to prove that $\fh=\fk$.

The space $\SJ_3(\mbA)_0$ is an irreducible module of $\SO_3(\mbA)$. More precisely, we have
\begin{center}
\begin{tabular}{|c | c  c  c  c| }
\hline
 $\mbA$ & $\quad \mbC$ & $\quad \mbC \oplus \mbC$ & $\quad \mbH_\mbC$ & $\quad \mbO_\mbC$ \\
\hline
type of $\SO_3(\mbA)$ & $\quad A_1$ & $\quad  A_2$ & $\quad C_3$ & $\quad F_4$\\
\hline
highest weight of $\SJ_3(\mbA)_0$ & $\quad 2\omega_1$ & $\quad \omega_1+\omega_2 $ & $\quad \omega_2$ & $\quad \omega_1$ \\
\hline
\end{tabular}
\end{center}
The table  can be deduced from  \cite[Theorem 3 and Lemma 17]{R10}.
A direct calculation shows that the multiplicity of weight zero in the irreducible $\mathfrak{so}_3(\mbA)$-module $\SJ_3(\mbA)_0$ is $1$ in the case $\mbA=\mbC$ and $2$ in other cases. Since $\fh\cap\SJ_3(\mbA)_0$ is the $\fh_1$-eigenspace of weight zero, $\dim(\fh/\fh_1)=\dim(\fh\cap\SJ_3(\mbA)_0)=\dim(\fk/\fh_1)$. Hence $\fk=\fh$, completing the proof.
\end{proof}

\begin{prop} \label{p.MaxTorus}
  \leavevmode
  \begin{enumerate}
    \item\label{p.MT-1} Given a  maximal torus $T_1$ of\, $\SO_3(\mbA)$, the identity component $T$ of its centralizer is a maximal torus of $\SL_3(\mbA)$.

\item\label{p.MT-2} If $\mbA \neq \mbC$, there exists a $g \in \SO_3(\mbA)$ such that the conjugate $T^g:= g T g^{-1}$ satisfies $ T_0 \subset T^g$ and
  $T_0 \simeq T^g/(T^g \cap \SO_3(\mbA))$, where $T_0$ is as in Lemma~\ref{l.T0torus}.
  \end{enumerate}
\end{prop}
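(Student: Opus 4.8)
The plan for \eqref{p.MT-1} is to reduce everything to the Lie-algebra statement of Lemma~\ref{l.Cartan2}. Write $G=\SL_3(\mbA)$ and $\mathfrak{g}=\mathfrak{sl}_3(\mbA)$. Since $T_1$ is a maximal torus of $\SO_3(\mbA)$, its Lie algebra $\fh_1=\operatorname{Lie}(T_1)$ is a Cartan subalgebra of $\mathfrak{so}_3(\mbA)$. As $T_1$ is connected, the identity component $T$ of its centralizer has Lie algebra $\mathfrak{g}^{T_1}=\{v\in\mathfrak{g}\mid[v,\fh_1]=0\}$, which by Lemma~\ref{l.Cartan2} is a Cartan subalgebra $\fh$ of $\mathfrak{g}$. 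A connected group whose Lie algebra is abelian and consists of semisimple elements is a torus, and since $\dim\fh=\rank\mathfrak{g}$, the torus $T$ is maximal. This settles \eqref{p.MT-1}.

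For \eqref{p.MT-2} the strategy is to first exhibit one maximal torus of $G$ containing $T_0$, and then transport the given $T$ onto it by an element of $\SO_3(\mbA)$. First I would apply Lemma~\ref{l.CartanSA} to obtain a $\theta$-stable Cartan subalgebra $\fh'$ of $\mathfrak{g}$ with $\fh_0\subset\fh'$ and, using $\mbA\neq\mbC$, with $\fh_1':=\fh'\cap\mathfrak{so}_3(\mbA)$ a Cartan subalgebra of $\mathfrak{so}_3(\mbA)$. Let $T_1'$ be the maximal torus of $\SO_3(\mbA)$ with $\operatorname{Lie}(T_1')=\fh_1'$ and let $T'$ be the maximal torus of $G$ attached to $T_1'$ by \eqref{p.MT-1}. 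Since $\fh'$ is abelian and contains $\fh_1'$, we have $\fh'\subseteq C_{\mathfrak g}(\fh_1')=\operatorname{Lie}(T')$, and equality of Cartan subalgebras forces $\operatorname{Lie}(T')=\fh'$. As $[\fh_0,\fh']=0$, the connected torus $T_0$ (realized in $G$ \textit{via} Lemma~\ref{l.T0torus}) centralizes $T'$; because a maximal torus of a connected reductive group is self-centralizing, $C_G(T')=T'$, and therefore $T_0\subset T'$.

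Next I would invoke conjugacy of maximal tori in the connected group $\SO_3(\mbA)$: choose $g\in\SO_3(\mbA)$ with $gT_1g^{-1}=T_1'$, so that $T^g=g\,C_G(T_1)^0\,g^{-1}=C_G(T_1')^0=T'\supseteq T_0$, which gives $T_0\subset T^g$. Finally, $\fh'=\operatorname{Lie}(T^g)$ is $\theta$-stable and $T^g$ is connected, so $T^g$ is $\theta$-stable and $T^g\cap\SO_3(\mbA)=(T^g)^\theta$ has Lie algebra $(\fh')^\theta=\fh_1'$; hence $\dim\big(T^g/(T^g\cap\SO_3(\mbA))\big)=\rank\mathfrak{sl}_3(\mbA)-\rank\mathfrak{so}_3(\mbA)=2$ by the dimension count in Lemma~\ref{l.Cartan2}. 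The composite $T_0\hookrightarrow T^g\twoheadrightarrow T^g/(T^g\cap\SO_3(\mbA))$ induces on Lie algebras the isomorphism $\fh_0\xrightarrow{\sim}\fh'/\fh_1'$ coming from the decomposition $\fh'=\fh_1'\oplus\fh_0$, so it is a surjective isogeny of $2$-dimensional tori; as any two tori of equal dimension are isomorphic, $T_0\simeq T^g/(T^g\cap\SO_3(\mbA))$.

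The bulk of the genuine content is already carried by Lemmas~\ref{l.CartanSA} and~\ref{l.Cartan2}, so the main obstacle is bookkeeping at the group level rather than at the Lie-algebra level: namely verifying $\operatorname{Lie}(T')=\fh'$, promoting the inclusion $\fh_0\subset\fh'$ to $T_0\subset T'$, and—most importantly—arranging that the conjugating element lies in $\SO_3(\mbA)$, which is exactly where the conjugacy of maximal tori of $\SO_3(\mbA)$ together with the $\theta$-stability of $\fh'$ enters.
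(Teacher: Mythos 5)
Your proposal is correct and follows essentially the same route as the paper: part~\eqref{p.MT-1} is reduced to Lemma~\ref{l.Cartan2}, and part~\eqref{p.MT-2} uses Lemma~\ref{l.CartanSA} to produce a maximal torus $T'$ containing $T_0$ and then identifies $T'$ with $T^g$ \textit{via} conjugacy of maximal tori in $\SO_3(\mbA)$ and the characterization of both as identity components of centralizers. Your write-up merely makes explicit the group-level bookkeeping (e.g.\ $\operatorname{Lie}(T')=\fh'$ and $T_0\subset T'=C_G(T')$) that the paper leaves implicit.
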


\begin{proof}
Claim~\eqref{p.MT-1} follows immediately from Lemma~\ref{l.Cartan2}. By Lemma~\ref{l.CartanSA}, there is a maximal torus $T'$ of $\SL_3(\mbA)$ such that $T_0\subset T'$, the identity component $T'_1$ of $T'\cap\SO_3(\mbA)$ is a maximal torus of $\SO_3(\mbA)$, and $T_0 \simeq T'/(T' \cap \SO_3(\mbA))$. Both $T_1$ and $T'_1$ are maximal tori of $\SO_3(\mbA)$, so there exists a $g\in\SO_3(\mbA)$ such that $T'_1=gT_1g^{-1}$. By Lemma~\ref{l.Cartan2} (resp.\ by the choice of $T$), the maximal torus $T'$ (resp.\ $T^g:= g T g^{-1}$) is the identity component of the centralizer of $T'_1$ (resp.\ $gT_1g^{-1}$). As $T'_1=gT_1g^{-1}$, we have $T'=T^g$, which concludes the proof.
\end{proof}

\subsection{The symmetric variety $\boldsymbol{X(\mbA)}$}

We consider the following rational map:
\begin{align*}
\Phi\colon \mbP(\mbC \oplus \SJ_3(\mbA)) & \longdashrightarrow \mbP(\mbC \oplus \SJ_3(\mbA) \oplus \SJ_3(\mbA) \oplus \mbC) \\  [t: A] & \longmapsto [t^3: t^2A: t\ \com(A): \det(A)].
\end{align*}

We denote by $G_\omega(\mbA^3, \mbA^6)$ the closure of the image of $\Phi$, which turns out to be a rational homogeneous space corresponding to the third row in  Freudenthal's magic square of varieties (\textit{cf.} \cite{LM}).

By \cite[Proposition 4.1]{LM}, the action of $\SL_3(\mbA)$ on $\mbP(\SJ_3(\mbA))$ has a unique closed orbit, denoted by $\mbA\mbP^2$, which is just one of the four Severi varieties.  Note that  $\mbA\mbP^2$ is also the variety of lines on $G_\omega(\mbA^3, \mbA^6)$  through a fixed point.

The following table collects information about all these varieties:

\begin{center}
\begin{tabular}{|c | c  c  c  c| }
\hline
 $\mbA$ & $\quad \mbC$ & $\quad \mbC \oplus \mbC$ & $\quad \mbH_\mbC$ & $\quad \mbO_\mbC$ \\
\hline
$G_\omega(\mbA^3, \mbA^6)$ & $\quad \Lag(3,6)$ & $\quad \Gr(3,6)$ & $\quad \mbS_6$ & $\quad E_7/P_7$ \\
\hline
$\mbA\mbP^2$ & $\quad \nu_2(\mbP^2)$ & $\quad \mbP^2 \times \mbP^2$ & $\quad \Gr(2,6)$ & $\quad E_6/P_1$ \\
\hline
\end{tabular}
\end{center}

Let $X(\mbA)$ be the closure of the image under $\Phi$ of the cubic hypersurface $t^3 = \det(A)$ in $ \mbP(\mbC \oplus \SJ_3(\mbA))$, which is a hyperplane  section of $G_\omega(\mbA^3, \mbA^6)$.
We call $X(\mbA)$ the symmetric manifold associated to the composition algebra $\mbA$.

We can now summarize some properties of $X(\mbA)$ as follows.

\begin{prop}\label{p.X(A)properties}\leavevmode
  \begin{enumerate}
    \item\label{p.X(A)p-1} The variety $X(\mbA)$ is the smooth equivariant completion of\, $\SL_3(\mbA)/\SO_3(\mbA)$ of Picard number 1.

 \item\label{p.X(A)p-2} The connected automorphism group of\, $X(\mbA)$ is isomorphic to $\SL_3(\mbA)$ up to isogeny, and the involution $\theta$ of\, $\SL_3(\mbA)$ induces an involution of\, $X(\mbA)$, denoted by $\theta$ again.

\item\label{p.X(A)p-3} The variety $X(\mbA)$ is locally rigid; \textit{i.e.} $H^1(X(\mbA), T_{X(\mbA)})=0$.

\item\label{p.X(A)p-4} The variety of lines through a general point of\, $X(\mbA)$ is a smooth hyperplane section of $\mbA \mbP^2$, which is respectively $\nu_2(\mbQ^1)$, $\mbP T^*_{\mbP^2}$, $\Gr_\omega(2, 6)$, $F_4/P_1$, where $\Gr_\omega(2, 6)$ is the symplectic Grassmannian.
\end{enumerate}
\end{prop}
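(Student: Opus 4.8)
The four assertions are essentially known (see \cite{R10} and \cite{LM}); my plan is to assemble them from one explicit orbit computation, a Lefschetz argument, a single cohomological computation, and a tangent-line description of the VMRT. For (i), I would single out the point $x_0 := \Phi([1:\Id]) = [1:\Id:\Id:1]$, which lies on $X(\mbA)$ since $\com(\Id)=\Id$ and $\det(\Id)=1$. The map $\Phi$ is $\SL_3(\mbA)$-equivariant: $\SL_3(\mbA)$ acts trivially on the two $\mbC$-summands, through its defining action on the $t^2A$-summand, and through the contragredient action $g\mapsto(g^*)^{-1}$ (adjoint with respect to $\langle A,B\rangle=\tr(A\circ B)$) on the $\com(A)$-summand, the equivariance following from $\det(g(A))=\det(A)$ and $\com(g(A))=(g^*)^{-1}\com(A)$. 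Hence $\SL_3(\mbA)\cdot x_0\subset X(\mbA)$ is an orbit, whose stabilizer I would identify: fixing $x_0$ forces $g(\Id)=\Id$, and using the polarizations $(\Id,\Id,A)=\tr(A)$ and $(\Id,A,A)=\tfrac12((\tr A)^2-\tr(A^2))$ of $\det$ (preserved by $\SL_3(\mbA)$), the condition $g(\Id)=\Id$ becomes equivalent to $g$ preserving $\tr(A^2)$, i.e.\ $g\in\SO_3(\mbA)$. A dimension count (the orbit $\SL_3(\mbA)/\SO_3(\mbA)$ has dimensions $5,8,14,26$, matching those of the hyperplane sections) shows the orbit is open and dense, so $X(\mbA)$ is its equivariant completion. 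Smoothness of the section is taken from \cite{R10}, and $\Pic(X(\mbA))=\mbZ$ follows from the Grothendieck--Lefschetz theorem, since $\dim X(\mbA)\ge 3$ and $\Pic(G_\omega(\mbA^3,\mbA^6))=\mbZ$.

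For (ii), the equivariance yields a homomorphism $\SL_3(\mbA)\to\Aut^0(X(\mbA))$ whose kernel is finite (an element acting trivially on $X(\mbA)$ must be scalar on $\SJ_3(\mbA)$ with cube equal to the identity), so it is an isogeny onto its image; surjectivity up to isogeny is equivalent to $H^0(X(\mbA),T_{X(\mbA)})=\mathfrak{sl}_3(\mbA)$, which I treat together with (iii). The involution is easy: since $\theta$ is an automorphism of $\SL_3(\mbA)$ with $\SL_3(\mbA)^\theta=\SO_3(\mbA)$, it descends to $\SL_3(\mbA)/\SO_3(\mbA)$ and, by the uniqueness in (i), extends to $X(\mbA)$; concretely $\theta$ acts compatibly on $\SJ_3(\mbA)$, hence on the ambient $\mbP(\mbC\oplus\SJ_3(\mbA)\oplus\SJ_3(\mbA)\oplus\mbC)$, preserving both $G_\omega(\mbA^3,\mbA^6)$ and the hyperplane $\{s_0=s_1\}$, so it restricts to $X(\mbA)$.

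Both the surjectivity in (ii) and the vanishing in (iii) reduce to computing $H^\bullet(X(\mbA),T_{X(\mbA)})$. Writing $Y:=G_\omega(\mbA^3,\mbA^6)$ and $X:=X(\mbA)=Y\cap H$, I would use the two short exact sequences
\[
0\to T_X\to T_Y|_X\to\mcO_X(1)\to 0,\qquad 0\to T_Y(-1)\to T_Y\to T_Y|_X\to 0,
\]
and evaluate the groups $H^i(Y,T_Y)$, $H^i(Y,T_Y(-1))$ and $H^i(Y,\mcO_Y(1))$ by Borel--Weil--Bott on the homogeneous space $Y$. Feeding the relevant vanishings into the long exact sequences should yield $H^1(X,T_X)=0$ and cut $H^0(X,T_X)$ down to the subalgebra stabilizing $H$, namely $\mathfrak{sl}_3(\mbA)$; these computations are also recorded in \cite{KP, R10}. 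This is the main obstacle of the proposition, since it requires the precise Bott weights of $T_Y(-j)$ case by case.

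Finally, for (iv), fix a general $x\in X$ in the open orbit. A line $\ell\subset Y$ through $x$ lies in $X=Y\cap H$ iff $\ell\subset H$, and since $\ell$ is linear with $x\in H$ this happens iff $\ell$ is tangent to $H$ at $x$, i.e.\ iff $[T_x\ell]\in\mbP(T_xX)$. As the lines of $Y$ through $x$ form $\mbA\mbP^2\subset\mbP(T_xY)$ and $\mbP(T_xX)$ is a hyperplane, the VMRT of $X$ at $x$ is the hyperplane section $\mbA\mbP^2\cap\mbP(T_xX)$. It is smooth for general $x$ by generic smoothness of the VMRT of a smooth Fano manifold covered by lines, and the classical description of smooth hyperplane sections of the four Severi varieties (\cite{LM}) identifies it with $\nu_2(\mbQ^1)$, $\mbP T^*_{\mbP^2}$, $\Gr_\omega(2,6)$, and $F_4/P_1$ respectively.
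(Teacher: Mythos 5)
The paper's own proof of this proposition is essentially a list of citations: part (i) is [R10, Lemma 17], part (ii) is [R10, Theorems 2 and 3], part (iii) is [BFM, Theorem 1.1], and only part (iv) is argued directly, by exactly your hyperplane-section argument (a line of $G_\omega(\mbA^3,\mbA^6)$ through $x$ lies in $X(\mbA)$ iff its tangent direction lies in the hyperplane $\mbP T_xX(\mbA)$, so the VMRT is a hyperplane section of $\mbA\mbP^2$). Your proposal instead tries to reprove most of this from scratch, which is a legitimately different and more self-contained route: the orbit computation identifying the stabilizer of $[1:\Id:\Id:1]$ with $\SO_3(\mbA)$, the Grothendieck--Lefschetz argument for $\Pic(X(\mbA))\simeq\mbZ$, and the tangency argument for (iv) are all sound and consistent with what one finds in Ruzzi and Landsberg--Manivel.

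Two points are not actually established by your argument. First, part (i) as stated (note the definite article, ``\emph{the} smooth equivariant completion'', matching the uniqueness asserted in the introduction) includes a uniqueness claim; you prove that $X(\mbA)$ \emph{is} a smooth equivariant completion of Picard number $1$ and later invoke ``the uniqueness in (i)'' to extend $\theta$, but uniqueness is never proved --- it genuinely requires Ruzzi's classification. (Your fallback concrete argument for extending $\theta$ does rescue that particular use, modulo correcting $\{s_0=s_1\}$ to ``first coordinate equals last coordinate'', since the hyperplane cutting out $X(\mbA)$ is $t^3=\det(A)$.) Second, and more seriously, the entire content of (iii) and of the surjectivity in (ii) is deferred to a Borel--Weil--Bott computation of $H^i(Y,T_Y(-j))$ that you describe but do not perform; this is precisely the computation carried out in [BFM, Theorem 1.1], and without it the proposal proves neither $H^1(X(\mbA),T_{X(\mbA)})=0$ nor $\mathfrak{aut}(X(\mbA))\simeq\mathfrak{sl}_3(\mbA)$. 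So the skeleton is right and the strategy would work, but the two hardest inputs remain citations in disguise rather than proofs.
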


\begin{proof}
 Claim~\eqref{p.X(A)p-1} follows from \cite[Lemma 17]{R10}, and claim~\eqref{p.X(A)p-2} is from \cite[Theorems 2 and 3]{R10}. Claim~\eqref{p.X(A)p-3} follows from \cite[Theorem 1.1]{BFM}.
 As $X(\mbA)$ is a smooth hyperplane section of $G_\omega(\mbA^3, \mbA^6)$, its variety of lines through a general point is a smooth hyperplane section of that of $G_\omega(\mbA^3, \mbA^6)$, namely a smooth hyperplane section of
 $\mbA\mbP^2$, which gives claim~\eqref{p.X(A)p-4}.
\end{proof}

\begin{lem}\label{l.ToricFiber}
Let $o=[1: \Id] \in \mbP(\mbC \oplus \SJ_3(\mbA))$.
\begin{enumerate}
\item\label{l.TF-1} We have $\SL_3(\mbA) \cdot o \simeq \SL_3(\mbA)/\SO_3(\mbA)$.
\item\label{l.TF-2} The image closure of\, $\Phi(T_0 \cdot o)$, denoted by $Y(\mbA)$, is isomorphic to the blowup of\, $\BP^2$ along its three coordinate points.
\end{enumerate}
\end{lem}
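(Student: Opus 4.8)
For claim~(\ref{l.TF-1}), the plan is to compute the stabiliser of $o$ directly. The group $\SL_3(\mbA)\subset\GL_\mbC(\SJ_3(\mbA))$ acts on $\mbP(\mbC\oplus\SJ_3(\mbA))$ through its action on the second summand and trivially on the $\mbC$-factor, so $g\cdot[t:A]=[t:g(A)]$; hence $g$ fixes $o=[1:\Id]$ if and only if $g(\Id)=\Id$. It then remains to prove $\{g\in\SL_3(\mbA):g(\Id)=\Id\}=\SO_3(\mbA)$. For the inclusion ``$\subseteq$'', every $g\in\SL_3(\mbA)$ preserves $\det$ and hence its polarisation $(A,B,C)$; using the elementary identities $(\Id,\Id,A)=\tr(A)$ and $(\Id,A,A)=\tfrac12(\tr(A)^2-\tr(A^2))$, obtained by expanding $A\times A$, together with $g(\Id)=\Id$, one deduces successively $\tr(g(A))=\tr(A)$ and then $\tr(g(A)^2)=\tr(A^2)$, so $g\in\SO_3(\mbA)$. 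For ``$\supseteq$'', recall that $\mathfrak{so}_3(\mbA)$ consists of Jordan derivations $\psi$, each of which satisfies $\psi(\Id)=0$ since $\psi(\Id)=\psi(\Id\circ\Id)=2\psi(\Id)$; as $\SO_3(\mbA)$ is connected, it therefore fixes $\Id$. Thus $\mathrm{Stab}_{\SL_3(\mbA)}(o)=\SO_3(\mbA)$ and $\SL_3(\mbA)\cdot o\simeq\SL_3(\mbA)/\SO_3(\mbA)$.

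For claim~(\ref{l.TF-2}), I would first reduce to the diagonal torus. Since $T_0$ is embedded in $\SL_3(\mbA)$ via $\nu\colon A\mapsto[B\mapsto ABA]$ by Lemma~\ref{l.T0torus}, we have $\nu(A)\cdot o=[1:A^2]$, and as squaring is surjective on $T_0$, one gets $T_0\cdot o=\{[1:\diag(\mu_1,\mu_2,\mu_3)]\mid\mu_1\mu_2\mu_3=1\}$. Applying $\Phi$ and using $\com(\diag(\mu_1,\mu_2,\mu_3))=\diag(\mu_2\mu_3,\mu_1\mu_3,\mu_1\mu_2)$ together with $\det=\mu_1\mu_2\mu_3=1$, I obtain the explicit $2$-torus orbit
\[
\{[1:\mu_1:\mu_2:\mu_3:\mu_2\mu_3:\mu_1\mu_3:\mu_1\mu_2:1]\mid\mu_1\mu_2\mu_3=1\}
\]
of $\mbT=\{(\mu_i)\mid\prod_i\mu_i=1\}\simeq(\mbC^*)^2$ inside a $\mbP^7$. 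Thus $Y(\mbA)$ is the closure of a $2$-dimensional torus orbit, i.e.\ a projective toric surface, which I would identify through its polytope.

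Writing $\mu_1=a$, $\mu_2=b$, $\mu_3=(ab)^{-1}$, the characters occurring above have exponents $(0,0),(1,0),(0,1),(-1,-1),(-1,0),(0,-1),(1,1)$ in $\mbZ^2$; their convex hull is the smooth lattice hexagon $P$ with vertices $(-1,-1),(0,-1),(1,0),(1,1),(0,1),(-1,0)$, and these seven exponents are exactly the lattice points of $P$ (the six vertices together with the interior point $(0,0)$). Since every lattice polygon is normal, the embedding determined by all the lattice points of $P$ is projectively normal, so the orbit closure $Y(\mbA)$ coincides with the normal smooth toric surface $X_P$. The normal fan of $P$ is the standard hexagonal fan of the degree-$6$ del Pezzo surface, whence $X_P\simeq\Bl_{3}\mbP^2$; and because any three points of $\mbP^2$ in general position give isomorphic blowups while the three coordinate points are in general position, $Y(\mbA)$ is the blowup of $\mbP^2$ along its three coordinate points. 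The main technical point to watch is precisely this normality step: one must confirm that \emph{all} lattice points of $P$ occur among the characters, so that the torus-orbit closure is the normal surface $X_P$ and not a singular projection of it. In particular, the naive coordinate projection $Y(\mbA)\dashrightarrow\mbP^2$, $[\,\cdots]\mapsto[\mu_1:\mu_2:\mu_3]$, acquires base points at the torus-fixed points, so the contraction to $\mbP^2$ should be read off from the fan rather than from these coordinates.
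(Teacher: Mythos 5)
Your proof is correct, and it diverges from the paper's in an interesting way. For part~(\ref{l.TF-1}) the paper gives no argument at all beyond citing Ruzzi \cite{R10}, whereas you compute the stabiliser of $o$ directly: reducing to $\{g\in\SL_3(\mbA):g(\Id)=\Id\}=\SO_3(\mbA)$ and proving both inclusions via the polarisation identities $(\Id,\Id,A)=\tr(A)$ and $(\Id,A,A)=\tfrac12(\tr(A)^2-\tr(A^2))$ is a clean, self-contained replacement; the only external input you use is the connectedness of $\SO_3(\mbA)$ (needed to pass from $\psi(\Id)=0$ for derivations to $g(\Id)=\Id$ for the group), which is consistent with the paper's standing identification $\SO_3(\mbA)=\SL_3(\mbA)^\theta$ for an involution of a simply connected group. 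For part~(\ref{l.TF-2}) you and the paper perform the same computation, arriving at the orbit $[1:\mu_1:\mu_2:\mu_3:\mu_2\mu_3:\mu_1\mu_3:\mu_1\mu_2:1]$ with $\mu_1\mu_2\mu_3=1$; the paper then declares the identification with $\mathrm{Bl}_3\mbP^2$ ``easy to see,'' while you actually justify it by checking that the seven characters are exactly the lattice points of the reflexive hexagon, so that the orbit closure is the projectively normal toric surface $X_P$ (the anticanonical degree-$6$ del Pezzo) rather than a singular projection. That normality check is precisely the content hidden in the paper's ``easy to see,'' so your write-up is, if anything, the more complete of the two.
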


\begin{proof}
Claim \eqref{l.TF-1} is from \cite{R10}. For~\eqref{l.TF-2}, recall that the Lie group $T_0$ acts on $J_3(\mbA)$ by $A \mapsto (B \mapsto ABA)$ (\textit{cf.} Lemma~\ref{l.T0torus}\eqref{l.TO-2}). Write $A:=\begin{psmallmatrix} \lambda_1 & 0 & 0 \\ 0 & \lambda_2 & 0 \\ 0 & 0 & \lambda_3 \end{psmallmatrix}$ in $T_0$; then $A\cdot \Id = \begin{psmallmatrix} \lambda_1^2 & 0 & 0 \\ 0 & \lambda_2^2 & 0 \\ 0 & 0 & \lambda_3^2 \end{psmallmatrix} $.  It follows that the image closure of $\Phi(T_0 \cdot o)$ is the closure of the elements
$$
[1: \lambda_1^2: \lambda_2^2: \lambda_3^2: \lambda_1^{-2}:   \lambda_2^{-2}: \lambda_3^{-2}: 1], \quad  \lambda_1 \lambda_2 \lambda_3=1.
$$
It is easy to see that this is  the blowup of $\BP^2$ along its three coordinate points.
\end{proof}

\subsection{The  $\boldsymbol{\mathfrak{S}_3}$-action on the  toric surface  $\boldsymbol{Y(\mbA)}$}

Let $M_1=\mathbb{P}^2$ with three coordinate points $P_1=[1:0:0]$, $P_2=[0:1:0]$, $P_3=[0:0:1]$.  By Lemma~\ref{l.ToricFiber}, $Y(\mbA)$ is the blowup of $M_1$ at $\{P_1, P_2, P_3\}$.  Let $D_i\subset M_1$, $i=1,2,3$,  be the lines through the points $P_{j_1}$ and $P_{j_2}$ such that $\{i,j_1,j_2\}=\{1,2,3\}$.  Let $E_i\subset Y(\mbA)$ be the exceptional divisors over the points $P_i$, $i=1,2,3$.  By abusing notation, we again denote  by $D_i\subset Y(\mbA)$, $i=1,2,3$, the strict transform of the line $D_i\subset M_1$. Let $M_2$ be the blowdown of $D_i\subset Y(\mbA)$. Then $M_2=\mathbb{P}^2$.

\begin{prop} \label{p.S3action}
The subgroup $\mathfrak{S}_3$ of\, $\SO_3(\mbA)$, which is introduced in Lemma~\ref{l.DefineSigma}, stabilizes the open torus $T_0\cdot o\subset Y(\mbA)\subset X(\mbA)$.  The action of\, $\mathfrak{S}_3$ on the boundary divisors of\, $Y(\mbA)$ is as follows: $\sigma(D_i)=D_{\sigma(i)}$ and $\sigma(E_j)=E_{\sigma(j)}$ for all $\sigma\in \mathfrak{S}_3$ and $1\leq i,j\leq 3$.
\end{prop}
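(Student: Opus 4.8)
The plan is to regard $Y(\mbA)$ as the toric surface with $2$-dimensional torus $T_0$, to check that $\mathfrak{S}_3$ acts on it by permuting the boundary (i.e.\ $T_0$-invariant) divisors, and then to pin down the permutation by descending to $M_1 = \mbP^2$ and computing on the generators $\sigma_{12},\sigma_{23}$.

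First I would establish stabilization of the open orbit. By Lemma~\ref{l.ToricFiber}\eqref{l.TF-1} we have $\SL_3(\mbA)\cdot o \simeq \SL_3(\mbA)/\SO_3(\mbA)$, so $o = [1:\Id]$ is fixed by $\SO_3(\mbA)$, hence by every $\sigma\in\mathfrak{S}_3\subset\SO_3(\mbA)$. For $A\in T_0$ this gives
$$\sigma(A\cdot o) = (\sigma A\sigma^{-1})(\sigma\cdot o) = \Inn_\sigma(A)\cdot o,$$
and $\Inn_\sigma(A)\in T_0$ by Lemma~\ref{l.T0torus}\eqref{l.TO-3}. Thus $\sigma$ maps $T_0\cdot o$ into itself; applying this to both $\sigma$ and $\sigma^{-1}$ yields $\sigma(T_0\cdot o)=T_0\cdot o$, so $\mathfrak{S}_3$ stabilizes the open torus and acts on its closure $Y(\mbA)$. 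Since $\mathfrak{S}_3$ normalizes $T_0$, it carries $T_0$-invariant prime divisors to $T_0$-invariant prime divisors, and as these are exactly $\{D_1,D_2,D_3,E_1,E_2,E_3\}$, the group $\mathfrak{S}_3$ permutes the six boundary divisors; it remains to identify the permutation.

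For this I would use the blowdown $p\colon Y(\mbA)\to M_1=\mbP^2$ obtained from the $\SL_3(\mbA)$-equivariant linear projection of $\mbP(\mbC\oplus\SJ_3(\mbA)\oplus\SJ_3(\mbA)\oplus\mbC)$ onto the first $\SJ_3(\mbA)$-summand, under which $A\cdot o$ with $A=\diag(\lambda_1,\lambda_2,\lambda_3)$ maps to the point $[\lambda_1^2:\lambda_2^2:\lambda_3^2]$, whose coordinate points are $P_1,P_2,P_3$. As $p$ is $\mathfrak{S}_3$-equivariant, it suffices to compute on the involutions $\sigma_{12},\sigma_{23}$, where the formula $\Inn_\sigma(\diag(\lambda_1,\lambda_2,\lambda_3))=\diag(\lambda_{\sigma(1)},\lambda_{\sigma(2)},\lambda_{\sigma(3)})$ of Lemma~\ref{l.T0torus}\eqref{l.TO-3} is unambiguous: each of these transpositions acts on $M_1$ by the corresponding transposition of coordinates, hence $\sigma(P_i)=P_{\sigma(i)}$. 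Since $D_i$ is the line through the two $P_j$ with $j\neq i$, equivariance gives $\sigma(D_i)=D_{\sigma(i)}$, and since $p$ contracts $E_i$ to $P_i$ we get $\sigma(E_i)=E_{\sigma(i)}$. The relations $\sigma(D_i)=D_{\sigma(i)}$ and $\sigma(E_i)=E_{\sigma(i)}$ are compatible with composition, so they extend from the generators to all of $\mathfrak{S}_3$.

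I expect the only genuine obstacle to be the bookkeeping of the left-versus-right (i.e.\ $\sigma$ versus $\sigma^{-1}$) convention in the coordinate permutation, which is exactly why I would restrict the computation to the transposition generators, where $\sigma=\sigma^{-1}$ removes the ambiguity; the remaining points—that the projection to $M_1$ is equivariant and that it contracts precisely the $E_i$—are routine consequences of the fact that it is induced by a linear projection onto an $\SL_3(\mbA)$-subrepresentation.
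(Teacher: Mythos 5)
Your proposal is correct and follows essentially the same route as the paper: both prove stabilization of the open orbit via $\sigma\cdot(A\cdot o)=\Inn_\sigma(A)\cdot(\sigma\cdot o)=\Inn_\sigma(A)\cdot o$ with Lemma~\ref{l.T0torus}\eqref{l.TO-3}, and both identify the permutation by descending to $M_1=\mbP^2$, where the generators act by permuting coordinates, hence permute the $P_i$, the lines $D_i$, and the exceptional divisors $E_i$ accordingly. The only cosmetic difference is how equivariance of the blowdown is justified: you invoke an $\SL_3(\mbA)$-equivariant linear projection onto the first $\SJ_3(\mbA)$-summand, whereas the paper simply lifts the coordinate-permutation action to the blowup and notes that the lift agrees with the restricted $X(\mbA)$-action because the two coincide on the dense open torus orbit.
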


\begin{proof}
Take $\sigma\in \mathfrak{S}_3\subset\SO_3(\mbA)$ and $B\in T_0$. Then $\sigma\cdot (B\cdot o)=\Inn_\sigma(B)\cdot (\sigma\cdot o)$. Note that $\sigma\cdot o=o$ since $\sigma\in\SO_3(\mbA)$, and $\Inn_\sigma(B)\in T_0$ by Lemma~\ref{l.T0torus}. Hence $\sigma\cdot (B\cdot o)\in T_0\cdot o$, which shows the first claim.

By the proof of Lemma~\ref{l.ToricFiber}, the composition of $T_0$-equivariant maps $T_0\rightarrow T_0\cdot o\rightarrow M_1$ is $\diag(\lambda_1, \lambda_2, \lambda_3)\mapsto [\lambda_1^2:\lambda_2^2: \lambda_3^2]$. The action of $\mathfrak{S}_3$ on $T_0$ extends to $M_1$ by permuting coordinates.  This action of $\mathfrak{S}_3$ lifts to $Y(\mbA)$ since the blowup center is $\mathfrak{S}_3$-stable. Furthermore, the lifting coincides with the restriction to $Y(\mbA)$ of the $\mathfrak{S}_3$-action on $X(\mbA)$ because these two actions coincide on the open torus orbit of $Y(\mbA)$.
Note that $\sigma$ sends $P_i$ to $P_{\sigma(i)}$ and sends the line joining $P_i$ and $P_j$ to the line joining $P_{\sigma(i)}$ and $P_{\sigma(j)}$. The conclusion follows.
\end{proof}

\tikzset{every picture/.style={line width=0.75pt}} 

\begin{tikzpicture}[x=0.75pt,y=0.75pt,yscale=-1,xscale=1]

\draw   (202,527.45) .. controls (202,513.86) and (213.01,502.85) .. (226.6,502.85) -- (374.4,502.85) .. controls (387.99,502.85) and (399,513.86) .. (399,527.45) -- (399,601.25) .. controls (399,614.84) and (387.99,625.85) .. (374.4,625.85) -- (226.6,625.85) .. controls (213.01,625.85) and (202,614.84) .. (202,601.25) -- cycle ;
\draw   (54,726.05) .. controls (54,713.79) and (63.94,703.85) .. (76.2,703.85) -- (182.8,703.85) .. controls (195.06,703.85) and (205,713.79) .. (205,726.05) -- (205,792.65) .. controls (205,804.91) and (195.06,814.85) .. (182.8,814.85) -- (76.2,814.85) .. controls (63.94,814.85) and (54,804.91) .. (54,792.65) -- cycle ;
\draw   (424,725.05) .. controls (424,712.79) and (433.94,702.85) .. (446.2,702.85) -- (552.8,702.85) .. controls (565.06,702.85) and (575,712.79) .. (575,725.05) -- (575,791.65) .. controls (575,803.91) and (565.06,813.85) .. (552.8,813.85) -- (446.2,813.85) .. controls (433.94,813.85) and (424,803.91) .. (424,791.65) -- cycle ;
\draw    (115,716.85) -- (183,795.85) ;
\draw    (70,778.85) -- (187,777.85) ;
\draw    (141,716.85) -- (74,794.85) ;
\draw [color={rgb, 255:red, 0; green, 0; blue, 0 }  ,draw opacity=1 ]   (248,528.85) -- (353,527.85) ;
\draw    (320,513.85) -- (372,571.85) ;
\draw    (278,513.85) -- (235,571.85) ;
\draw    (244,601.85) -- (364,601.85) ;
\draw [color={rgb, 255:red, 0; green, 0; blue, 0 }  ,draw opacity=1 ]   (240,550.85) -- (266,617.85) ;
\draw [color={rgb, 255:red, 0; green, 0; blue, 0 }  ,draw opacity=1 ]   (364,542.85) -- (337,613.85) ;
\draw [color={rgb, 255:red, 0; green, 0; blue, 0 }  ,draw opacity=1 ]   (457,721.85) -- (519,802.85) ;
\draw [color={rgb, 255:red, 0; green, 0; blue, 0 }  ,draw opacity=1 ]   (441,739.85) -- (565,738.85) ;
\draw [color={rgb, 255:red, 0; green, 0; blue, 0 }  ,draw opacity=1 ]   (542,721.85) -- (479,800.85) ;
\draw    (367,640.85) -- (415.73,700.3) ;
\draw [shift={(417,701.85)}, rotate = 230.66] [color={rgb, 255:red, 0; green, 0; blue, 0 }  ][line width=0.75]    (10.93,-3.29) .. controls (6.95,-1.4) and (3.31,-0.3) .. (0,0) .. controls (3.31,0.3) and (6.95,1.4) .. (10.93,3.29)   ;
\draw    (258,637.85) -- (210.2,701.25) ;
\draw [shift={(209,702.85)}, rotate = 307.01] [color={rgb, 255:red, 0; green, 0; blue, 0 }  ][line width=0.75]    (10.93,-3.29) .. controls (6.95,-1.4) and (3.31,-0.3) .. (0,0) .. controls (3.31,0.3) and (6.95,1.4) .. (10.93,3.29)   ;

\draw (90,750) node  [color={rgb, 255:red, 0; green, 0; blue, 0 }  ,opacity=1 ,rotate=-1.12]  {$D_{3}$};
\draw (127,794) node  [color={rgb, 255:red, 0; green, 0; blue, 0 }  ,opacity=1 ,rotate=-1.12]  {$D_{2}$};
\draw (163,747) node  [color={rgb, 255:red, 0; green, 0; blue, 0 }  ,opacity=1 ,rotate=-1.12]  {$D_{1}$};
\draw (301,516) node  [color={rgb, 255:red, 0; green, 0; blue, 0 }  ,opacity=1 ,rotate=-1.12]  {$E_{2}$};
\draw (338,550) node  [color={rgb, 255:red, 0; green, 0; blue, 0 }  ,opacity=1 ,rotate=-1.12]  {$D_{1}$};
\draw (296,589) node  [color={rgb, 255:red, 0; green, 0; blue, 0 } ,opacity=1 ,rotate=-1.12]  {$D_{2}$};
\draw (268,551) node  [color={rgb, 255:red, 0; green, 0; blue, 0 }  ,opacity=1 ,rotate=-1.12]  {$D_{3}$};
\draw (240,587) node  [color={rgb, 255:red, 0; green, 0; blue, 0 }  ,opacity=1 ,rotate=-1.12]  {$E_{1}$};
\draw (361,585) node  [color={rgb, 255:red, 0; green, 0; blue, 0 }  ,opacity=1 ,rotate=-1.12]  {$E_{3}$};
\draw (499,727) node  [color={rgb, 255:red, 0; green, 0; blue, 0 }  ,opacity=1 ,rotate=-1.12]  {$E_{2}$};
\draw (528,764) node  [color={rgb, 255:red, 0; green, 0; blue, 0 }  ,opacity=1 ,rotate=-1.12]  {$E_{3}$};
\draw (474,763) node  [color={rgb, 255:red, 0; green, 0; blue, 0 } ,opacity=1 ,rotate=-1.12]  {$E_{1}$};
\draw (28,795) node  [color={rgb, 255:red, 0; green, 0; blue, 0 } ,opacity=1 ,rotate=-1.12]  {$M_{1}$};
\draw (600,799) node  [color={rgb, 255:red, 0; green, 0; blue, 0 }  ,opacity=1 ,rotate=-1.12]  {$M_{2}$};
\draw (177,567) node  [color={rgb, 255:red, 0; green, 0; blue, 0 }  ,opacity=1 ,rotate=-1.12]  {$Y(\mbA)$};
\end{tikzpicture}

\medskip
Now we study the involution $\theta$ on $X(\mbA)$.

\begin{prop}\label{p.involution-Y(A)}
The involution $\theta$ of $X(\mbA)$ stabilizes $T_0\cdot o\subset Y(\mbA)\subset X(\mbA)$. Furthermore, $\theta(D_i)=E_i$ and $\theta(E_i)=D_i$ for $i=1,2,3$.
\end{prop}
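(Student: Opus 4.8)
The plan is to first determine how $\theta$ acts on the open dense torus orbit $T_0\cdot o$ by a Lie-theoretic computation, and then transport that information to the boundary divisors using the classical resolution of the standard Cremona involution on the blowup of $\mbP^2$ at three coordinate points. Once $\theta|_{Y(\mbA)}$ is identified with (the resolution of) the Cremona involution, the statement about $D_i$ and $E_i$ is essentially forced.

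First I would compute $\theta$ on $T_0\cdot o$. Since $o$ is the base point of the symmetric space $\SL_3(\mbA)/\SO_3(\mbA)$ and $\theta$ fixes $\SO_3(\mbA)=\SL_3(\mbA)^\theta$ pointwise, the induced involution of $X(\mbA)$ (Proposition~\ref{p.X(A)properties}\eqref{p.X(A)p-2}) fixes $o$ and satisfies the twisted equivariance $\theta(g\cdot x)=\theta(g)\cdot\theta(x)$; indeed on the open orbit it is $gH\mapsto\theta(g)H$. Now take $A\in T_0$, write $A=\exp(a)$ with $a\in\fh_0$, so that $\nu(A)=\exp(\mu(a))$ by Lemma~\ref{l.T0torus}\eqref{l.TO-2}, where $\mu(a)\in\SJ_3(\mbA)_0$. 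By Lemma~\ref{l.ThetaDecomposition}\eqref{l.TD-2} the involution $\theta$ acts as $-1$ on $\SJ_3(\mbA)_0$, hence $\theta(\mu(a))=-\mu(a)$ and therefore $\theta(\nu(A))=\exp(-\mu(a))=\nu(A^{-1})$. Consequently
$$
\theta(A\cdot o)=\theta(\nu(A))\cdot\theta(o)=\nu(A^{-1})\cdot o=A^{-1}\cdot o,
$$
so $\theta$ restricts on $T_0\cdot o$ to the torus inversion $A\cdot o\mapsto A^{-1}\cdot o$. In particular $\theta$ stabilizes $T_0\cdot o$, and since $Y(\mbA)=\overline{T_0\cdot o}$ and $\theta$ is a morphism, $\theta$ stabilizes $Y(\mbA)$ and $\theta|_{Y(\mbA)}$ is the unique biregular extension of this inversion.

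Next I would translate the inversion to $M_1=\mbP^2$ and read off the divisor swap. By the proof of Proposition~\ref{p.S3action}, the composite $T_0\to T_0\cdot o\to M_1$ is $\diag(\lambda_1,\lambda_2,\lambda_3)\mapsto[\lambda_1^2:\lambda_2^2:\lambda_3^2]$; setting $\mu_i=\lambda_i^2$, the inversion becomes $[\mu_1:\mu_2:\mu_3]\mapsto[\mu_1^{-1}:\mu_2^{-1}:\mu_3^{-1}]=[\mu_2\mu_3:\mu_1\mu_3:\mu_1\mu_2]$, the standard Cremona involution with base points $P_1,P_2,P_3$. This involution is resolved precisely by blowing up $P_1,P_2,P_3$, i.e.\ on $Y(\mbA)$ it becomes biregular; a local computation near $P_1$, approaching along $[1:\epsilon b:\epsilon c]$, gives $[\epsilon bc:c:b]\to[0:c:b]\in\{\mu_1=0\}=D_1$, so the exceptional divisor $E_1$ maps isomorphically onto $D_1$. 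By the $\mathfrak{S}_3$-symmetry of Proposition~\ref{p.S3action} and involutivity, $\theta(E_i)=D_i$ and $\theta(D_i)=E_i$ for all $i$. Since $\theta|_{Y(\mbA)}$ is the extension of the inversion found above and agrees with the resolved Cremona map on the dense orbit, the two biregular involutions coincide, which proves the claim.

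The only genuinely nontrivial point is the first step: relating the group involution $\theta$ to the torus inversion through the $\theta$-eigenspace decomposition of Lemma~\ref{l.ThetaDecomposition}. Once that is in hand, the rest is the standard local analysis of the Cremona involution, where the only care needed is index bookkeeping, fixed by the convention that $D_i=\{\mu_i=0\}$ is the line opposite to $P_i$.
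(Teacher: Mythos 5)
Your proof is correct and follows essentially the same route as the paper: both arguments first use the $\theta$-eigenspace decomposition of Lemma~\ref{l.ThetaDecomposition} to show that $\theta$ restricts to the inversion $b\cdot o\mapsto b^{-1}\cdot o$ on $T_0\cdot o$, and then identify the biregular extension of this inversion to $Y(\mbA)$ as the resolved standard Cremona involution, which swaps $E_i$ with $D_i$. The only cosmetic difference is that the paper packages the second step as the factor-swap on $M_1\times M_2\simeq\mbP^2\times\mbP^2$ (in which $Y(\mbA)$ sits as the graph of the Cremona map), whereas you verify the divisor exchange by an explicit local computation near the blown-up points; these are the same geometric picture.
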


\begin{proof}
  Let $\vartheta$ be the involution on $M_1\times M_2$ given by
  \smash{$\vartheta([x_1:x_2:x_3],\mkern-1mu [y_1:y_2:y_3])=([y_1:y_2:y_3],\mkern-1mu [x_1:x_2:x_3])$.}
  By the proof of Lemma~\ref{l.ToricFiber}, we have $\vartheta(b\cdot o)=b^{-1}\cdot o$ for each $b\in T_0$.  Hence, $\vartheta$ stabilizes $T_0\cdot o$ and its closure $Y(\mbA)$. Furthermore, by the definition of $\vartheta$, we have $\vartheta(D_i)=E_i$ and $\vartheta(E_i)=D_i$ for $i=1,2,3$.

It remains to verify that $\theta$ coincides with the restriction of $\vartheta$ on $T_0\cdot o$. The involution $\theta$ of $\mathfrak{sl}_3(\mbA)$ fixes $\mathfrak{so}_3(\mbA)$ and acts on $\SJ_3(\mbA)_0$ by $-1$. In particular, $\theta(\xi)=-\xi$ and $\theta(b)=b^{-1}$ for $\xi\in\fh_0$ and $b\in T_0$, proving the claim.
 \end{proof}

\begin{lem} \label{l.D-E-lin-equivalence}
The Picard group of $Y(\mbA)$ is generated by $ D_1$, $D_2$, $D_3$, $E_1$, $E_2$ and $E_3$. Moreover, we have the following rational equivalence relations:
$$
 D_1-E_1 \equiv_{\lin} D_2-E_2 \equiv_{\lin} D_3-E_3.
$$
\end{lem}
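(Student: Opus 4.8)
The plan is to reduce everything to the standard computation in the Picard group of a blowup of $\mbP^2$. Since $Y(\mbA)$ is, by Lemma~\ref{l.ToricFiber}, the blowup $\pi\colon Y(\mbA)\to M_1=\mbP^2$ at the three coordinate points $P_1,P_2,P_3$, its Picard group is free of rank $4$,
$$
\Pic(Y(\mbA))=\mbZ H\oplus \mbZ E_1\oplus \mbZ E_2\oplus \mbZ E_3,
$$
where $H=\pi^*\mcO_{\mbP^2}(1)$ is the pullback of the hyperplane class and $E_1,E_2,E_3$ are the exceptional divisors. I would take this as the starting point and then express the $D_i$ in this basis.

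First I would compute the class of each strict transform $D_i$. By the labeling convention fixed before Proposition~\ref{p.S3action}, $D_i$ is the strict transform of the line in $M_1$ joining $P_{j_1}$ and $P_{j_2}$ with $\{i,j_1,j_2\}=\{1,2,3\}$. This line has class $H$ in $M_1$ and passes through exactly the two points $P_{j_1},P_{j_2}$ (and not $P_i$), so its strict transform has class $H-E_{j_1}-E_{j_2}$; that is,
$$
D_1\equiv_{\lin} H-E_2-E_3,\qquad D_2\equiv_{\lin} H-E_1-E_3,\qquad D_3\equiv_{\lin} H-E_1-E_2.
$$

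For the first assertion, I would rewrite one of these relations, say $H\equiv_{\lin}D_1+E_2+E_3$, to see that $H$ lies in the subgroup generated by $D_1,E_2,E_3$; combined with $E_1,E_2,E_3$ themselves, this shows that $\{D_1,D_2,D_3,E_1,E_2,E_3\}$ generates $\Pic(Y(\mbA))$. For the linear equivalences I would simply subtract $E_i$ from each class above, obtaining
$$
D_i-E_i\equiv_{\lin} H-E_1-E_2-E_3\qquad\text{for } i=1,2,3,
$$
so that all three differences equal the common class $H-E_1-E_2-E_3$, which yields the asserted chain of equivalences. No step here presents a genuine obstacle; the only point requiring care is the bookkeeping of which exceptional divisor each line meets, and this is already pinned down by the convention $\{i,j_1,j_2\}=\{1,2,3\}$, so the computation is purely formal once the basis of $\Pic(Y(\mbA))$ is written down.
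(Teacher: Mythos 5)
Your proposal is correct and follows essentially the same computation as the paper: both identify the classes of the strict transforms $D_i$ as $H-E_{j_1}-E_{j_2}$ (equivalently, pull back the lines through pairs of the $P_j$ and compare), which yields $D_i-E_i\equiv_{\lin}H-E_1-E_2-E_3$ for each $i$. The only cosmetic difference is in the generation statement, where the paper invokes the fact that the torus-invariant boundary divisors generate the Picard group of a smooth toric variety, while you deduce it directly from the blowup basis $\{H,E_1,E_2,E_3\}$ via $H\equiv_{\lin}D_1+E_2+E_3$ --- both are valid and the computations coincide.
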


\begin{proof}
Since $Y(\mbA)$ is a toric variety, its Picard group is generated by prime boundary divisors $ D_1$, $D_2$, $D_3$, $E_1$, $E_2$ and $E_3$. The line joining $P_1$ and $P_2$ and the line joining $P_1$ and $P_3$ are linearly equivalent in $M_1$. Pulling back to $Y(\mbA)$, one gets $E_1+E_2+D_3\equiv_{\lin} E_1+E_3+D_2$, which implies $D_2-E_2\equiv_{\lin} D_3-E_3$. Similarly, one gets $D_1-E_1\equiv_{\lin} D_2-E_2$.
\end{proof}

\begin{prop} \label{p.Picard-invariant}
Let $\Gamma \subset\Aut(Y(\mbA))$ be a subgroup. Denote by $\Pic(Y(\mbA))^\Gamma$ the invariant subgroup of\, $\Pic(Y(\mbA))$ under the action of\, $\Gamma$. For a finite order element $\sigma \in \Aut(Y(\mbA))$, we set $\Pic(Y(\mbA))^\sigma = \Pic(Y(\mbA))^{<\sigma>}$. Then
\begin{itemize}
\item $\Pic(Y(\mbA))$ is a free abelian group of rank 4 with basis $\{D_1,E_1,E_2,E_3\}$;
    \item $\Pic(Y(\mbA))^{\sigma_{12}}$ is a free abelian group of rank 3 with basis
    $\{D_1+E_2,E_1+E_2,E_3\}$;
    \item $\Pic(Y(\mbA))^{\sigma_{13}}$ is a free abelian group of rank 3 with basis
        $\{D_1+E_3,E_1+E_3,E_2\}$;
    \item $\Pic(Y(\mbA))^{\sigma_{23}}$ is a free abelian group of rank 3 with basis $\{ D_1,E_1,E_2+E_3\}$;
    \item $\Pic(Y(\mbA))^{\sigma_{123}}=\Pic(Y(\mbA))^{\sigma_{321}}=\Pic(Y(\mbA))^{\mathfrak{S}_3}$ is a free abelian group of rank $2$ with basis $\{D_1+E_2+E_3,E_1+E_2+E_3\}$.
\end{itemize}
\end{prop}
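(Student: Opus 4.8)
The plan is to reduce the whole statement to a routine computation in the rank-$4$ lattice $\Pic(Y(\mbA))$, once a convenient basis is fixed. First I would record that $Y(\mbA)$ is the blowup of $\mbP^2$ at three points, so that $\Pic(Y(\mbA))$ is free of rank $4$. Combining this with Lemma~\ref{l.D-E-lin-equivalence}, which gives $D_1-E_1 \equiv_{\lin} D_2-E_2 \equiv_{\lin} D_3-E_3$, I can eliminate $D_2$ and $D_3$ from the boundary generators: explicitly $D_2 = D_1 - E_1 + E_2$ and $D_3 = D_1 - E_1 + E_3$. Hence $\{D_1, E_1, E_2, E_3\}$ generates $\Pic(Y(\mbA))$, and since a generating set of cardinality equal to the rank of a free abelian group is automatically a basis, this settles the first bullet.

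Next I would translate the $\mathfrak{S}_3$-action into this basis. By Proposition~\ref{p.S3action} we have $\sigma(D_i) = D_{\sigma(i)}$ and $\sigma(E_i) = E_{\sigma(i)}$; feeding in the expressions for $D_2, D_3$ above turns each permutation into an explicit integral matrix acting on $\{D_1, E_1, E_2, E_3\}$. For instance $\sigma_{12}$ sends $D_1 \mapsto D_2 = D_1 - E_1 + E_2$, $E_1 \mapsto E_2$, $E_2 \mapsto E_1$, $E_3 \mapsto E_3$, and the remaining transpositions and the two $3$-cycles are handled the same way.

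With the matrices in hand, each invariant subgroup is the solution space of a small system $\sigma(v) = v$ for $v = a D_1 + b E_1 + c E_2 + d E_3$. In the $\sigma_{12}$ case the fixed-point equations force $c = a + b$, so $v = a(D_1 + E_2) + b(E_1 + E_2) + d E_3$, which is exactly the stated rank-$3$ basis; the cases $\sigma_{13}$ and $\sigma_{23}$ are symmetric and yield $\{D_1 + E_3, E_1 + E_3, E_2\}$ and $\{D_1, E_1, E_2 + E_3\}$ respectively. For the $3$-cycle $\sigma_{123}$ the equations force $c = d = a + b$, whence $v = a(D_1 + E_2 + E_3) + b(E_1 + E_2 + E_3)$ and the invariants have rank $2$.

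Finally I would identify the last three lattices. Since $\sigma_{321} = \sigma_{123}^{-1}$ generates the same cyclic subgroup, $\Pic(Y(\mbA))^{\sigma_{321}} = \Pic(Y(\mbA))^{\sigma_{123}}$; and because $\mathfrak{S}_3 = \langle \sigma_{12}, \sigma_{123}\rangle$, the $\mathfrak{S}_3$-invariants are precisely the $\sigma_{12}$-fixed vectors inside the rank-$2$ lattice $\Pic(Y(\mbA))^{\sigma_{123}}$. A direct check shows that both $D_1 + E_2 + E_3$ and $E_1 + E_2 + E_3$ are already $\sigma_{12}$-fixed, so all three subgroups coincide with that rank-$2$ lattice. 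There is no genuine obstacle beyond bookkeeping; the only point that demands care is to rewrite $D_2$ and $D_3$ \emph{via} Lemma~\ref{l.D-E-lin-equivalence} before computing, since Proposition~\ref{p.S3action} presents the action on the full boundary while the invariants must be read off in the chosen basis.
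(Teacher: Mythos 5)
Your proposal is correct and follows essentially the same route as the paper: rewrite $D_2$ and $D_3$ via Lemma~\ref{l.D-E-lin-equivalence}, work in the basis $\{D_1,E_1,E_2,E_3\}$, and solve the linear fixed-point equations for each permutation (the paper carries out the $\sigma_{12}$ computation explicitly and leaves the rest as "similar"). Your closing observation that the $\mathfrak{S}_3$-invariants equal the $\sigma_{123}$-invariants because both stated generators are already $\sigma_{12}$-fixed is a clean way to finish the last bullet.
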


\begin{proof}
As $Y(\mbA)$ is the blowup of $\mathbb{P}^2$ along three points, $\Pic(Y(\mbA))$ is freely generated by $E_1$, $E_2$, $E_3$ and the pullback of a line in $\mathbb{P}^2$, namely $D_1+E_2+E_3$, which gives the first claim.

For any divisor $aD_1+bE_1+cE_2+dE_3\in \Pic(Y(\mbA))^{\sigma_{12}}$, where $a,b,c,d\in \mathbb{Z}$,
$$\begin{array}{rcl}
     \sigma_{12}(aD_1+bE_1+cE_2+dE_3)&
     =& aD_2+bE_2+cE_1+dE_3\\
     &=&a(E_2-E_1+D_1)+bE_2+cE_1+dE_3\\
     &=&aD_1+(c-a)E_1+(a+b)E_2+dE_3.
\end{array}$$
Then $c=a+b$ and $$aD_1+bE_1+cE_2+dE_3=a(D_1+E_2)+b(E_1+E_2)+dE_3.$$
It follows that $\Pic(Y(\mbA))^{\sigma_{12}}$ is of rank 3 and  $D_1+E_2,E_1+E_2,E_3$ is a $\mathbb{Z}$-basis.

The proofs for other claims are similar.
\end{proof}

\section{Proof of Theorem~\ref{t.main} via reduction to a family of surfaces}

\subsection{Invariance of varieties of minimal rational tangents}

For a uniruled projective manifold $X$, let $\RatCurves^\n(X)$ denote the normalization of the space of rational curves on $X$ (see \cite[Proposition~II.2.11]{Kollar}). Every irreducible component $\sK$ of $\RatCurves^\n(X)$ is a (normal) quasi-projective variety equipped with a quasi-finite morphism to the Chow variety of $X$; the image consists of the Chow points of irreducible, generically reduced rational curves.  There is a universal family $\sU$ with projections $\upsilon \colon \sU \to \sK$, $\mu \colon \sU \to X$, and $\upsilon$ is a $\bP^1$-bundle (for these results, see \cite[Proposition~II.2.11 and Theorem~II.2.15]{Kollar}).

For any $x \in X$, let $\sU_x := \mu^{-1}(x)$ and $\sK_x := \upsilon(\sU_x)$. We call $\sK$ a \emph{family of minimal rational curves} if $\sK_x$ is non-empty and projective for a general point $x$.  There is a rational map $\iota_x\colon \sK_x \dasharrow \bP T_x X$ (the projective space of lines in the tangent space at $x$) that sends any curve which is smooth at $x$ to its tangent direction.  The closure of the image of $\iota_x$ is denoted by $\sC_x$ and called the \emph{variety of minimal rational tangents} (VMRT) at the point $x$.  By \cite[Theorem~1]{HM2} and \cite[Theorem~3.4]{Kebekus}, composing $\iota_x$ with the normalization map $\sK_x^\n \to \sK_x$ yields the normalization of $\sC_x$. Also, $\sK_x^\n$ is a union of components of the variety $\RatCurves^\n(x, X)$ defined in \cite[II.(2.11.2)]{Kollar} and hence is smooth for $x \in X$ general by \cite[Corollary~II.3.11.5]{Kollar}. In this case, $\sU_x \simeq \sK_x^\n$ is smooth, and the rational map $\iota_x$ induces a birational morphism $\sU_x \simeq \sK_x^\n \to \sC_x$, which is still denoted by $\iota_x$ by abuse of notation. Since $\mcU_x$ is both the normalization of $\sK_x$ and that of $\mcC_x$, we call $\mcU_x$ the normalized Chow space or the normalized VMRT.

By Proposition~\ref{p.X(A)properties}, the variety $X(\mbA)$ is covered by lines, and its VMRT at a general point is just the variety of lines through that point, denoted by $\sC(\mbA) \subset \mbP(V_\mbA)$ with $V_\mbA$ being the tangent space of $X(\mbA)$ at a general point, which is respectively $\nu_2(\mbQ^1)$, $\mbP T^*_{\mbP^2}$, $\Gr_\omega(2, 6)$, $F_4/P_1$ with the natural embedding.

 For a family of smooth projective varieties $\pi\colon \mcX \to \Delta$ with $\mcX_t \simeq X(\mbA)$ for all $t \neq 0$, we take a general section $\tau\colon \Delta \to \mcX$ such that $\tau(t)$ is a general point in $\mcX_t$ for all $t \in \Delta$.  By considering the VMRT of $\mcX_t$ at $\tau(t)$, we get a family of embedded projective subvarieties with general fibers isomorphic to $\sC(\mbA) \subset \mbP(V_\mbA)$.

We first prove the invariance of the VMRT, which means that the central fiber has the same VMRT as the general fiber. Note that the cases when $\mbA=\mbH_\mbC$ or $\mbO_\mbC$ are proved in \cite{KP}. For the reader's convenience, we include the proof for all three cases here.

\begin{prop}\label{p.Invariance}
Assume $\mbA \neq \mbC$.  Consider a family of smooth projective varieties $\pi\colon \mcX \to \Delta$ with $\mcX_t \simeq X(\mbA)$ for all $t \neq 0$. Then the VMRT of $\mcX_0$ at a general point is projectively equivalent to the VMRT of $X(\mbA)$ at a general point.
\end{prop}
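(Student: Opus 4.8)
The plan is to prove the invariance of the VMRT by deforming the VMRT of the general fibers, showing the limit is contained in the expected fiber $\sC(\mbA) \subset \mbP(V_\mbA)$, and then using the deformation rigidity of the VMRT varieties themselves (which are homogeneous, hence rigid by Theorem~\ref{t.HMRigidity}, or rigid as smooth hyperplane sections by classification) to conclude projective equivalence. Concretely, I would set up the relative VMRT along the general section $\tau$ as follows.

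\medskip

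First I would invoke the general theory recalled just before the statement. Since $\mcX_t \simeq X(\mbA)$ is covered by lines for $t \neq 0$, and the minimal rational curves deform to a dominating family of minimal rational curves on the central fiber $\mcX_0$ (by properness of the relative $\RatCurves^{\n}$ over $\Delta$ and upper semicontinuity of degrees), I obtain a family of minimal rational curves $\sK^0$ on $\mcX_0$. Taking the relative universal family and restricting over the section $\tau$, I get a family $\{\sC_{\tau(t)}\}_{t \in \Delta}$ of projective subvarieties of the projectivized tangent spaces, a flat family over $\Delta$ whose general fiber is $\sC(\mbA) \subset \mbP(V_\mbA)$. The key numerical input is that the dimension and degree of $\sC_{\tau(0)}$ cannot jump: the VMRT is cut out in $\mbP T_{\tau(0)}\mcX_0$ by conditions of bounded degree, and by semicontinuity the limit $\sC_{\tau(0)}$ has dimension at least $\dim \sC(\mbA)$. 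I would next show the reverse inequality, so that $\sC_{\tau(0)}$ has the same dimension; this rests on the fact that minimal rational curves on $\mcX_0$ through a general point have anticanonical degree at most that of lines on $X(\mbA)$, forcing $\dim \sC_{\tau(0)} \le \dim \sC(\mbA)$.

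\medskip

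The central step, and what I expect to be the main obstacle, is to identify the limit VMRT $\sC_{\tau(0)}$ with $\sC(\mbA)$ as projective subvarieties, not merely to match invariants. The strategy is to use that $\sC(\mbA)$ is one of the very rigid projective varieties $\nu_2(\mbQ^1)$, $\mbP T^*_{\mbP^2}$, $\Gr_\omega(2,6)$, $F_4/P_1$ (\textit{cf.} Proposition~\ref{p.X(A)properties}\eqref{p.X(A)p-4}): each is a smooth linearly normal variety whose only smooth deformations inside $\mbP(V_\mbA)$ are projectively equivalent copies of itself. More precisely, I would first argue $\sC_{\tau(0)}$ is smooth and irreducible of the expected dimension, using that the normalized VMRT $\mcU_{\tau(0)}$ is smooth for general $\tau(0)$ and that the flat limit of smooth varieties of the correct Hilbert polynomial remains smooth when the component of the Hilbert scheme containing $[\sC(\mbA)]$ is itself smooth. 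Here one uses that $\sC(\mbA)$, being a smooth hyperplane section of a Severi variety $\mbA\mbP^2$, has the property that its Hilbert scheme point is smooth with no nearby non-equivalent smooth members—this follows from the rigidity of these homogeneous (or hyperplane-section) models together with the vanishing of the relevant normal-bundle cohomology. The delicate part is ruling out that $\sC_{\tau(0)}$ degenerates to a projectively inequivalent smooth variety with the same Hilbert polynomial; for the homogeneous cases $\Gr_\omega(2,6)$ and $F_4/P_1$ this is handled by the VMRT-rigidity results of Hwang--Mok, while for $\nu_2(\mbQ^1)$ and $\mbP T^*_{\mbP^2}$ it follows from an elementary classification of their linear systems.

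\medskip

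Finally, having established that $\sC_{\tau(0)} \subset \mbP T_{\tau(0)}\mcX_0$ is a smooth subvariety projectively equivalent, as an abstract embedded variety, to $\sC(\mbA) \subset \mbP(V_\mbA)$, I conclude the proposition by noting that projective equivalence of the VMRT at a general point is exactly the assertion to be proved. The cases $\mbA = \mbH_\mbC$ and $\mbA = \mbO_\mbC$ are already established in \cite{KP}, so the genuinely new content is the case $\mbA = \mbC \oplus \mbC$, where $\sC(\mbA) = \mbP T^*_{\mbP^2}$; here I would either adapt the argument of \cite{KP} directly or appeal to the flag-variety rigidity underlying Theorem~\ref{t.HMRigidity}, since $\mbP T^*_{\mbP^2}$ is the flag variety $\SL_3/B$, which is rigid. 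This reduces the whole proposition to checking that no inequivalent smooth specialization of these specific VMRT varieties exists—the obstacle I flagged above.
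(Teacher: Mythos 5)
Your two-step skeleton (first identify the limit VMRT abstractly, then pin down the embedding) matches the paper's, but both steps have genuine gaps as you have set them up. For the first step, you work with the flat limit of the embedded VMRTs in the Hilbert scheme and claim the limit is smooth because the Hilbert scheme component containing $[\sC(\mbA)]$ is smooth. That inference fails: a smooth component of the Hilbert scheme can perfectly well contain singular or non-reduced members, so nothing prevents the embedded limit from degenerating. The paper avoids this entirely by working with the \emph{normalized} Chow space $\mcU_{x_t}$, which is automatically a family of \emph{smooth} projective varieties over $\Delta$ (by the results of \cite{Kollar} quoted before the statement), and then applying an abstract rigidity statement fibrewise: Theorem~\ref{t.HMRigidity} when $\sC(\mbA)$ is $\Gr_\omega(2,6)$ or $F_4/P_1$, and, for $\mbA=\mbC\oplus\mbC$, the observation that the relation $K_{\mcU_{x_t}}+\iota_{x_t}^*\mcO(2)=0$ persists at $t=0$, so that $\mcU_{x_0}$ is a Fano threefold of index $2$ and Wi\'sniewski's classification \cite{W} gives $\mcU_{x_0}\simeq\mbP(T^*_{\mbP^2})$. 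Your alternative appeal to ``flag-variety rigidity underlying Theorem~\ref{t.HMRigidity}'' for $\mbP T^*_{\mbP^2}=\SL_3/B$ is not available as stated, since that theorem covers only Picard number $1$ while $\SL_3/B$ has Picard number $2$; this is precisely why the paper substitutes the Fano-index argument here.

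For the second step---upgrading the abstract isomorphism $\mcU_{x_0}\simeq\sC(\mbA)$ to projective equivalence of $\sC_{x_0}\subset\mbP T_{x_0}\mcX_0$ with $\sC(\mbA)\subset\mbP(V_\mbA)$---your argument rests on the unproved assertion that the only smooth deformations of these models inside $\mbP(V_\mbA)$ are projectively equivalent copies, to be checked by a ``classification of linear systems.'' The actual mechanism in the paper is different and concrete: one must control the tangent map $\iota_{x_0}\colon\mcU_{x_0}\to\mbP T_{x_0}\mcX_0$ itself, which could a priori fail to embed $\mcU_{x_0}$ by the expected linear system, and the input that rules this out is the non-degeneracy of the variety of tangential lines of the model VMRT (\cite[Lemma~2.12]{FL} for $\mbP T^*_{\mbP^2}$, \cite[Proposition~2.6]{Hw01} for $\Gr_\omega(2,6)$ and $F_4/P_1$), fed into the argument of \cite[Proposition~3.9]{FL}. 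This ingredient is absent from your proposal, and without it the passage from the abstract model to the embedded one is not justified.
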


\begin{proof}
Firstly we show that the normalized Chow space $\mcU_{x_0}$ of $\mcX_0$ at a general point $x_0$ is isomorphic to $\sC(\mbA)$. Take a general section $t\in\Delta\mapsto x_t\in\mcX_t$ of $\pi$ passing through the general point $x_0$ in $\mcX_0$. Shrinking $\Delta$ if necessary, we can assume that $x_t$ is general in $\mcX_t$ for each $t\neq 0$. The normalized Chow spaces $\mcU_{x_t}$ along this section give a family of smooth projective varieties such that $\mcU_{x_t} \simeq \sC(\mbA)$ for $t \neq 0$. If $\mbA \neq \mbC \oplus \mbC$, then $\mcU_{x_0} \simeq \sC(\mbA)$ by Theorem~\ref{t.HMRigidity}.  Now assume $\mbA = \mbC \oplus \mbC$. Consider the normalization map $\iota_{x_t}\colon \mcU_{x_t} \to \mcC_{x_t} \subset \mbP T_{x_t}\mcX_t$. Note that for any $t \neq 0$, we have $\mcU_{x_t} \simeq \mbP T^*_{\mbP^2}$, thus $K_{\mcU_{x_t}}+\iota_{x_t}^*\mcO(2)=0$. It follows that this equality also holds for $t=0$, which implies that $\mcU_{x_0}$ is a Fano threefold with index $2$ and we can apply
\cite[Theorem]{W} to deduce that $\mcU_{x_0} \simeq \mbP(T^*_{\mbP^2})$ (note that in \cite{W}, the projectivisation is taken in the sense of Grothendieck).

Recall that for a smooth projective subvariety $Z \subset \BP V$, the variety of tangential lines of $Z$ is the subvariety $\mcT_Z \subset \Gr(2, V) \subset \BP (\wedge^2 V)$ consisting of the tangential lines of $Z$. By \cite[Lemma 2.12]{FL}, the tangential variety of $\mbP(T^*_{\mbP^2})$ is non-degenerate. By \cite[Proposition 2.6]{Hw01}, the tangential varieties of $\Gr_\omega(2, 6)$ and $F_4/P_1$ are both non-degenerate. Now we can use the same argument as that of \cite[Proposition 3.9]{FL} to conclude the proof.
\end{proof}

Recall that a vector group of dimension $g$ is the additive group $\mathbb{G}_a^g$.  An equivariant compactification of $\mathbb{G}_a^g$ is a smooth projective $\mathbb{G}_a^g$-variety $Y$ which admits an open $\mathbb{G}_a^g$-orbit $O$ isomorphic to $\mathbb{G}_a^g$.  The boundary $\partial Y = Y \setminus O$ is a union of irreducible reduced divisors $\cup_j E_j$.  It follows that $\Pic(Y) = \oplus_{j} \mathbb{Z} [E_j]$. Moreover, we have $-K_Y = \sum_{j}a_j E_j$ with $a_j \geq 2$ by \cite[Theorem 2.7]{HT}. In particular, the support of $-K_Y$ is the whole boundary of $Y$.

By Proposition~\ref{p.X(A)properties}, we have $\mathfrak{aut}(X(\mbA)) = \mathfrak{sl}_3(\mbA)$.  Note that $\mathfrak{aut}(\sC(\mbA)) = \mathfrak{so}_3(\mbA)$, which implies that
\begin{equation} \label{eq}
\dim \mathfrak{aut}(X(\mbA)) = \dim \mathfrak{aut}(\sC(\mbA)) + \dim X(\mbA).
\end{equation}

For a specialization $\pi\colon \mcX \to \Delta$ of $X(\mbA)$, equality~\eqref{eq} implies the following precise information on the central fiber by the proofs of \cite[Lemma 4.6 and Corollary 4.8]{FL}. This result extends \cite[Theorem 1.1]{KP} to $\mbA \neq \mbC$.

\begin{prop}\label{p.centralfiber}
Assume $\mbA \neq \mbC$.
Consider a family of smooth projective varieties $\pi\colon \mcX \to \Delta$ with $\mcX_t \simeq X(\mbA)$ for all $t \neq 0$. Then
\begin{enumerate}
\item\label{p.cf-1} either $\mcX_0 \simeq X(\mbA)$,
\item\label{p.cf-2} or $\mathfrak{aut}(\mcX_0) \simeq \mathbb{C}^n\rtimes (\mathfrak{so}_3(\mbA) \oplus \mathbb{C})$ and $\mcX_0$ is an equivariant compactification of $\mathbb{G}_a^n$ with $n =\dim X(\mbA)$.
\end{enumerate}
\end{prop}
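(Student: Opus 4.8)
The plan is to pin down $\dim\mathfrak{aut}(\mcX_0)$ by trapping it between a lower bound from semicontinuity and an upper bound extracted from the invariance of the VMRT, exactly as in the proofs of \cite[Lemma~4.6 and Corollary~4.8]{FL}; the numerical identity~\eqref{eq} is what makes the two bounds differ by at most $1$. Since $\pi$ is smooth, $T_{\mcX/\Delta}$ is locally free, so the upper semicontinuity of $h^0$ gives
$$
\dim\mathfrak{aut}(\mcX_0)=h^0(\mcX_0,T_{\mcX_0})\ \geq\ h^0(\mcX_t,T_{\mcX_t})=\dim\mathfrak{aut}(X(\mbA))=\dim\mathfrak{sl}_3(\mbA)
$$
for $t\neq 0$.

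For the reverse inequality I would fix a general point $x_0\in\mcX_0$ and filter $\mathfrak{aut}(\mcX_0)$ by the order of vanishing of a holomorphic vector field at $x_0$, passing to the associated graded $\bigoplus_{k\geq -1}\mathfrak{g}_k$. The lowest piece injects into $T_{x_0}\mcX_0$, so $\dim\mathfrak{g}_{-1}\leq n$. The degree-$0$ piece $\mathfrak{g}_0$ acts linearly on $T_{x_0}\mcX_0$ preserving the VMRT $\mcC_{x_0}$, which by Proposition~\ref{p.Invariance} is projectively equivalent to $\sC(\mbA)\subset\mbP(V_\mbA)$; hence $\mathfrak{g}_0$ embeds into the Lie algebra $\mathfrak{aut}(\widehat{\sC(\mbA)})$ of linear vector fields preserving the affine cone over $\sC(\mbA)$. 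As $\sC(\mbA)$ is linearly non-degenerate, $\mathfrak{aut}(\widehat{\sC(\mbA)})=\mathfrak{aut}(\sC(\mbA))\oplus\mbC\cdot\mathrm{Id}=\mathfrak{so}_3(\mbA)\oplus\mbC$, so $\dim\mathfrak{g}_0\leq\dim\mathfrak{so}_3(\mbA)+1$. Finally each higher piece $\mathfrak{g}_k$ ($k\geq 1$) embeds into the $k$-th prolongation of $\mathfrak{g}_0$, and the key point is that these prolongations vanish. Granting this, combining with~\eqref{eq} yields $\dim\mathfrak{aut}(\mcX_0)\leq n+\dim\mathfrak{so}_3(\mbA)+1=\dim\mathfrak{sl}_3(\mbA)+1$.

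I expect the vanishing of prolongations to be the main obstacle: it amounts to checking that $\mathfrak{aut}(\widehat{\sC(\mbA)})^{(1)}=0$ for each of the four varieties $\nu_2(\mbQ^1),\ \mbP T^*_{\mbP^2},\ \Gr_\omega(2,6),\ F_4/P_1$. This is where the specific projective geometry enters: none of these appears in the Fu--Hwang classification of non-degenerate projective varieties with non-zero prolongation, and the non-degeneracy of their tangential varieties recorded in the proof of Proposition~\ref{p.Invariance} is precisely the geometric input driving the vanishing. By the deformation-theoretic bookkeeping in \cite[Lemma~4.6]{FL} (using the constancy of $\chi(\mcX_t,T_{\mcX_t})$ in the flat family), this upper bound is equivalent to $h^1(\mcX_0,T_{\mcX_0})\leq 1$, with $h^1(\mcX_0,T_{\mcX_0})=\dim\mathfrak{aut}(\mcX_0)-\dim\mathfrak{sl}_3(\mbA)$.

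It remains to split according to this excess. If $h^1(\mcX_0,T_{\mcX_0})=0$, then $\mcX_0$ is locally rigid, so the family $\mcX\to\Delta$ is trivial near $0$ and $\mcX_0\simeq\mcX_t\simeq X(\mbA)$, giving~\eqref{p.cf-1}. If instead $h^1(\mcX_0,T_{\mcX_0})=1$, then $\dim\mathfrak{aut}(\mcX_0)=\dim\mathfrak{sl}_3(\mbA)+1$ forces both bounds above to be equalities, so $\mathfrak{g}_{-1}=T_{x_0}\mcX_0$ has dimension $n$ and $\mathfrak{g}_0=\mathfrak{so}_3(\mbA)\oplus\mbC$ contains the grading element $\mathrm{Id}$. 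This grading element acts on $\mathfrak{g}_{-1}$ by a non-zero scalar, hence splits the filtration; thus $\mathfrak{aut}(\mcX_0)=\mathfrak{g}_{-1}\rtimes\mathfrak{g}_0$ with $\mathfrak{g}_{-1}$ abelian (as $[\mathfrak{g}_{-1},\mathfrak{g}_{-1}]\subseteq\mathfrak{g}_{-2}=0$) of dimension $n=\dim\mcX_0$, and $\mathfrak{aut}(\mcX_0)\simeq\mbC^n\rtimes(\mathfrak{so}_3(\mbA)\oplus\mbC)$. The unipotent subgroup $\mathbb{G}_a^n=\exp(\mathfrak{g}_{-1})$ then acts with an open orbit through $x_0$, exhibiting $\mcX_0$ as an equivariant compactification of $\mathbb{G}_a^n$, which is~\eqref{p.cf-2}.
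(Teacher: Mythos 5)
Your argument is correct and is essentially the paper's own proof: the paper establishes this proposition simply by invoking equality~\eqref{eq} together with the proofs of \cite[Lemma~4.6 and Corollary~4.8]{FL}, and your semicontinuity bound, the filtration by vanishing order with prolongation vanishing, and the dichotomy on $h^1(\mcX_0,T_{\mcX_0})\in\{0,1\}$ are precisely what those cited proofs carry out. One minor misattribution: the vanishing of $\mathfrak{aut}(\widehat{\sC(\mbA)})^{(1)}$ indeed comes from the Fu--Hwang classification of non-degenerate varieties with non-zero prolongation, but the non-degeneracy of the tangential varieties is a separate input used only for the invariance of the VMRT in Proposition~\ref{p.Invariance}, not for the prolongation vanishing.
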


To prove Theorem~\ref{t.main}, it suffices to exclude  case~\eqref{p.cf-2}  in Proposition~\ref{p.centralfiber}. In the following, we will assume case~\eqref{p.cf-2}  to deduce a contradiction.

\subsection{Reduction to a family of surfaces} \label{n.VW-bdl-Liealg}

Let $\mcV =\pi_* T_{\mcX/\Delta}$, which is a vector bundle over $\Delta$ such that $\mcV_t \simeq \mathfrak{aut}(\mcX_t) \simeq \mathfrak{sl}_3(\mbA)$ for $t\neq 0$.  Let $\mcW \subset \mcV$ be the subbundle such that $\mcW_t \simeq \mathfrak{so}_3(\mbA)$ for $t\neq 0$, which is the Lie algebra of the stabilizer of $\tau(t) \in \mcX_t$. It follows that $\mcW_0 \simeq \mathfrak{so}_3(\mbA)$.  For each $t\in \Delta$, the fiber $\mcV_t$ is a completely reducible $\mathfrak{so}_3(\mbA)$-module, which is isomorphic to $\mathfrak{so}_3(\mbA)\oplus\SJ_3(\mbA)_0$. By a dimension check, $\mcV_0 \simeq \mathbb{C}^n\rtimes \mathfrak{so}_3(\mbA) \subset \mathfrak{aut}(\mcX_0)$. Our construction and argument here for $\mcV$ and $\mcW$ is an analogue of the proof of~\cite[Lemma 4.11]{FL}.

We fix a family of Cartan subalgebra $\mcH \subset \mcW$; \textit{i.e.} $\mcH_t$ is a Cartan subalgebra of $\mcW_t$ for all $t$. Consider $\tilde{\mcH} \subset \mcV$ defined by
$$
\tilde{\mcH}_t := \{ v \in \mcV_t | [v, \mcH_t]=0\}.
$$
It follows that $\tilde{\mcH}_t$ is a Cartan subalgebra of $\mcV_t $ for all $t \neq 0$, by Lemma~\ref{l.Cartan2}.  Note that $\rk(\tilde{\mcH}) = \rk(\mcH)+2$ (as $\mbA \neq \mbC$).

 For $t \in \Delta$, let $\bH_t = \exp(\mcH_t) \subset \Aut^0(\mcX_t)_{\tau(t)}$, which is a family of tori. Set $\tilde{\bH}_t  = \exp(\tilde{\mcH}_t) \subset \Aut^0(\mcX_t)$.

 \begin{prop} \label{p.reduction}
Assume $\mbA\neq\mbC$. Let $\mcY \subset \mcX$ be the connected component of the fixed locus $\mcX^\bH$ along the section $\tau(\Delta) \subset \mcX$.
\begin{enumerate}
\item\label{p.r-1} The map $\mcY \to \Delta$ is a smooth family of projective surfaces.
\item\label{p.r-2} For each $t\in\Delta$, $\mcY_t$ is the closure of $\tilde{\bH}_t\cdot \tau(t)$ in $\mcX_t$.
\item\label{p.r-3} When $t\neq 0$, $\mcY_t$ is isomorphic to $Y(\mbA)$, the blowup of\, $\mbP^2$ along three coordinate points.
\item\label{p.r-4} The inclusion $\mcY_0 \subset \mcX_0$ is a smooth equivariant compactification of\, $\mathbb{G}_a^2$.
\end{enumerate}
 \end{prop}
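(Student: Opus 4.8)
The plan is to analyse the fixed locus fibrewise and then glue over $\Delta$, deducing all four claims from a single dimension count. Since the fixed locus of a torus acting on a smooth projective variety is smooth, each fibre $\mcX_t^{\bH_t}$ is smooth, and the relative fixed scheme of the family of tori $\bH\to\Delta$ acting on the smooth family $\mcX\to\Delta$ is smooth over $\Delta$ (after shrinking $\Delta$ so that $\bH$ is a split, hence diagonalisable, group scheme); granting constant relative dimension, $\mcY\to\Delta$ is then smooth. To obtain the dimension I would compute the multiplicity of the zero weight in the isotropy representation of $\bH_t$ on $T_{\tau(t)}\mcX_t$. For $t\neq0$ the general point $\tau(t)$ lies in the open orbit $\SL_3(\mbA)/\SO_3(\mbA)$, so $T_{\tau(t)}\mcX_t\cong\SJ_3(\mbA)_0$ as a module over the stabiliser $\mcW_t\cong\mathfrak{so}_3(\mbA)$, and the weight computation in the proof of Lemma~\ref{l.Cartan2} gives zero-weight multiplicity $2$. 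For $t=0$ the crux is to identify $T_{\tau(0)}\mcX_0$ as an $\mathfrak{so}_3(\mbA)$-module: evaluation of the abelian ideal $\mathbb{C}^n\subset\mcV_0$ at $\tau(0)$ is an $\mathfrak{so}_3(\mbA)$-equivariant isomorphism onto $T_{\tau(0)}\mcX_0$ (the open $\mathbb{G}_a^n$-orbit passes through $\tau(0)$, and $\mcW_0=\mathfrak{so}_3(\mbA)$ normalises $\mathbb{C}^n$ and fixes $\tau(0)$), while $\mathbb{C}^n\cong\mcV_0/\mcW_0\cong\SJ_3(\mbA)_0$ as $\mathfrak{so}_3(\mbA)$-modules. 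Hence the zero-weight multiplicity is again $2$, so $\dim\mcY_t=2$ for all $t$, proving~\eqref{p.r-1}.

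For~\eqref{p.r-2}, I would use that $\tilde{\mcH}_t$ centralises $\mcH_t$, so $\tilde{\bH}_t$ commutes with $\bH_t$ and therefore preserves $\mcX_t^{\bH_t}$; being connected, it preserves the component $\mcY_t$ through $\tau(t)$. The differential of the orbit map $h\mapsto h\cdot\tau(t)$ has kernel $\mcH_t$ (which fixes $\tau(t)$) and image the zero-weight subspace $(T_{\tau(t)}\mcX_t)^{\bH_t}=T_{\tau(t)}\mcY_t$; thus $\tilde{\bH}_t\cdot\tau(t)$ is $2$-dimensional, hence open and dense in $\mcY_t$, giving $\mcY_t=\overline{\tilde{\bH}_t\cdot\tau(t)}$.

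Claims~\eqref{p.r-3} and~\eqref{p.r-4} then follow by identifying the orbit. For $t\neq0$, $\tilde{\bH}_t$ is a maximal torus of $\SL_3(\mbA)=\Aut^0(\mcX_t)$ containing the maximal torus $\bH_t$ of the stabiliser of $\tau(t)$, the latter conjugate to $\SO_3(\mbA)$. Moving $\tau(t)$ to $o=[1:\Id]$ by an automorphism and applying Proposition~\ref{p.MaxTorus}\eqref{p.MT-2} (whose conjugating element lies in $\SO_3(\mbA)$ and hence fixes $o$), I may take $\tilde{\bH}_t=T'$ with $T_0\subset T'$ and $T_0\cong T'/(T'\cap\SO_3(\mbA))$; then $T'\cdot o=T_0\cdot o$, so $\mcY_t\cong\overline{T_0\cdot o}=Y(\mbA)$ by Lemma~\ref{l.ToricFiber}. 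For $t=0$, the subalgebra $\tilde{\mcH}_0$ is abelian (its bracket with $\mcH_0$ vanishes by the definition of the zero-weight space, and $[\mathbb{C}^n,\mathbb{C}^n]=0$), so $\tilde{\bH}_0=\bH_0\times(\tilde{\bH}_0\cap\mathbb{G}_a^n)$ with $\tilde{\bH}_0\cap\mathbb{G}_a^n\cong\mathbb{G}_a^2$; since $\bH_0$ fixes $\tau(0)$, the orbit is $\mathbb{G}_a^2\cdot\tau(0)$, which is open in $\mcY_0$ and isomorphic to $\mathbb{G}_a^2$ (the vector group acts freely on its open orbit). Hence $\mcY_0$ is a smooth equivariant compactification of $\mathbb{G}_a^2$.

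The main obstacle is~\eqref{p.r-1}: fibrewise smoothness of fixed loci is routine, but I must ensure that the central fibre has the same dimension $2$ as the general fibre and that $\mcY$ itself, not merely its fibres, is smooth over $\Delta$. The first point rests on the module identification $T_{\tau(0)}\mcX_0\cong\SJ_3(\mbA)_0$ above, and the second on relative smoothness of the fixed scheme of the diagonalisable family $\bH\to\Delta$; once these are in place, the remaining claims are formal consequences of the orbit analysis.
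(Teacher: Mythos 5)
Your proof is correct and follows essentially the same route as the paper: smoothness of the torus-fixed locus, the dimension count via the multiplicity-$2$ zero-weight space of $\bH_t$ in $T_{\tau(t)}\mcX_t\cong\SJ_3(\mbA)_0$, density of the $\tilde{\bH}_t$-orbit for~\eqref{p.r-2}, Proposition~\ref{p.MaxTorus} together with Lemma~\ref{l.ToricFiber} for~\eqref{p.r-3}, and the splitting of $\tilde{\bH}_0$ as torus times $\mathbb{G}_a^2$ for~\eqref{p.r-4}. The only cosmetic differences are that the paper obtains $\dim\mcY_t=2$ by sandwiching ($\leq 2$ from the weight space, $\geq 2$ from the orbit) and cites Bia{\l}ynicki-Birula for relative smoothness, whereas you compute the fixed-locus tangent space directly and spell out the $\mathfrak{so}_3(\mbA)$-module identification of $T_{\tau(0)}\mcX_0$ that the paper defers to a later lemma.
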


\begin{proof}
By Bia{\l}ynicki-Birula's theorem on torus actions \cite{B}, the map $\mcY\to\Delta$ is a smooth family of projective varieties. For each $t\in\Delta$, the representation of $\mcW_t\simeq \mathfrak{so}_3(\mbA)$ on $T_{\tau(t)}\mcX_t$ coincides with that on $\SJ_3(\mbA)_0$, and the subspace $T_{\tau(t)}\mcY_t$ is contained in the $\bH_t$-eigenspace of weight zero, which is of dimension $2$. It follows that $\dim\mcY_t\leq 2$. Since $\tilde{\mcH}_t\cdot\tau(t)\subset\mcY_t$, we have $\dim\mcY_t\geq\dim\tilde{\mcH}_t\cdot\tau(t)=\dim(\tilde{\mcH}_t/\mcH_t)=2$. Then $\mcY_t$ is the closure of $\tilde{\bH}_t\cdot \tau(t)$ in $\mcX_t$, and it is a projective surface. This proves~\eqref{p.r-1} and~\eqref{p.r-2}.

By Proposition~\ref{p.MaxTorus}, when $t\neq 0$, the projective surface $\mcY_t$ is isomorphic to the closure of $T_0\cdot o$ in $X(\mbA)$. Then~\eqref{p.r-3} follows from Lemma~\ref{l.ToricFiber}. By the structure of $\mcV_0$, $\tilde{\bH}_0$ is the semi-direct product of the torus $\bH_0$ and a vector group $\mathbb{G}_a^2$. Then~\eqref{p.r-4} follows from~\eqref{p.r-2}.
\end{proof}

It follows that the $\mcY_t$ are quasi-homogeneous for all $t\in \Delta$.  Denote by $\partial\mcY_t$ the boundary, \textit{i.e.} the complement of the open orbit. Let $\partial\mcY$ be the closure of $\cup_{t\neq 0}\partial\mcY_t$, and let $(\partial\mcY)_t$ be the fiber of $\partial\mcY$ over $t\in\Delta$.

\begin{lem}\label{l.bound-Y0}
We have $(\partial\mcY)_t=\partial\mcY_t$ as sets for each $t\in\Delta$.
\end{lem}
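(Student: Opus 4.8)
The case $t\neq0$ is immediate from the construction: the relative open orbit $\bigcup_{t\neq0}\tilde{\bH}_t\cdot\tau(t)$ is open in $\mcY|_{\Delta\setminus\{0\}}$ (the open orbit in each fibre surface), so its complement $\bigcup_{t\neq0}\partial\mcY_t$ is already closed there and coincides with $\partial\mcY\cap\mcY|_{\Delta\setminus\{0\}}$; hence $(\partial\mcY)_t=\partial\mcY_t$ for $t\neq0$. Only the central fibre requires argument, and the plan is to prove the two inclusions $(\partial\mcY)_0\subseteq\partial\mcY_0$ and $\partial\mcY_0\subseteq(\partial\mcY)_0$ separately. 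As preparation I would record two structural facts. First, $\partial\mcY$ is a divisor in the smooth threefold $\mcY$, every component of which dominates $\Delta$ (each contains points over $t\neq0$); consequently $\partial\mcY$ has no vertical components, it is flat over $\Delta$, and $(\partial\mcY)_0$ is a curve of pure dimension $1$. Second, since each $\partial\mcY_t$ with $t\neq0$ is $\tilde{\bH}_t$-invariant, the closure $\partial\mcY$ is invariant under the fibrewise $\tilde{\bH}$-action, so $(\partial\mcY)_0$ is $\tilde{\bH}_0$-invariant; by Proposition~\ref{p.reduction} this action on $\mcY_0$ factors through the open orbit $\mathbb{G}_a^2$.

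For the inclusion $(\partial\mcY)_0\subseteq\partial\mcY_0$ I would use that $\mcY_0$ is an equivariant compactification of $\mathbb{G}_a^2$ (Proposition~\ref{p.reduction}\eqref{p.r-4}) with a single two-dimensional orbit $O_0$. Any $\mathbb{G}_a^2$-invariant closed subset meeting $O_0$ contains the entire orbit $O_0$ and hence has dimension $2$. Since $(\partial\mcY)_0$ is $\mathbb{G}_a^2$-invariant of pure dimension $1$, it must be disjoint from $O_0$, that is, contained in $\partial\mcY_0$. This argument is robust: it uses only the existence of the open orbit, not the full orbit structure of $\mcY_0$.

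For the reverse inclusion I would pass to divisor classes. On each toric surface $\mcY_t\simeq Y(\mbA)$ with $t\neq0$ one has $-K_{\mcY_t}\sim\partial\mcY_t$ (the sum of all torus-invariant prime divisors), so the line bundles $\mcO_\mcY(\partial\mcY)$ and $-K_{\mcY/\Delta}$ agree on every fibre over $\Delta\setminus\{0\}$. As all fibres are smooth rational surfaces, $H^1(\mcY_t,\mcO_{\mcY_t})=H^2(\mcY_t,\mcO_{\mcY_t})=0$, whence the restriction maps $\Pic(\mcY)\to\Pic(\mcY_t)$ are isomorphisms for all $t$; therefore $\mcO_\mcY(\partial\mcY)\simeq-K_{\mcY/\Delta}$ on $\mcY$, and restricting to the central fibre yields $(\partial\mcY)_0\sim-K_{\mcY_0}$. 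By \cite[Theorem 2.7]{HT} applied to the $\mathbb{G}_a^2$-compactification $\mcY_0$, the group $\Pic(\mcY_0)$ is freely generated by the prime boundary divisors $F_1,\dots,F_r$ of $\partial\mcY_0$, and $-K_{\mcY_0}=\sum_j a_j F_j$ with every $a_j\geq2$. By the first inclusion the effective divisor $(\partial\mcY)_0$ is supported on $\partial\mcY_0$, so $(\partial\mcY)_0=\sum_j b_j F_j$ with $b_j\geq0$; comparing coefficients in the basis $\{[F_j]\}$ forces $b_j=a_j\geq2$ for all $j$. In particular every $F_j$ occurs with positive coefficient, so $\operatorname{supp}(\partial\mcY)_0=\partial\mcY_0$, completing the proof.

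The main obstacle is the global input in the last paragraph, namely establishing $(\partial\mcY)_0\sim-K_{\mcY_0}$: this hinges on the constancy of the Picard group in the family (so that the toric identity $-K_{\mcY_t}\sim\partial\mcY_t$ for $t\neq0$ propagates to the central fibre) together with the Hassett--Tschinkel description of $\Pic(\mcY_0)$ as freely generated by the boundary with all anticanonical coefficients at least $2$. I also expect the verification that $\partial\mcY$ has no vertical component, hence is genuinely flat over $\Delta$ with $(\partial\mcY)_0$ a curve, to require some care, whereas the dimension/invariance argument for the first inclusion should be straightforward.
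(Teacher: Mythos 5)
Your proof is correct and follows essentially the same route as the paper: the inclusion $(\partial\mcY)_0\subseteq\partial\mcY_0$ via invariance of $(\partial\mcY)_0$ under the degenerate torus action (hence disjointness from the open orbit), and the reverse inclusion by identifying $(\partial\mcY)_0$ with $-K_{\mcY_0}$ in the family and invoking \cite[Theorem 2.7]{HT} to see that its support is the whole boundary. You merely spell out in more detail the steps the paper leaves implicit (the dimension count for the first inclusion and the constancy of $\Pic$ in the family for the anticanonical identification).
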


\begin{proof}
When $t\neq 0$, it is immediate from the construction that $(\partial\mcY)_t=\partial\mcY_t$.  The subvariety $(\partial\mcY)_t\subset\mcY_t$ is stable under the vector fields in $\tilde{\mcH}_t$ for $t \neq 0$. By continuity, this is also the case for $t=0$. Consequently, $(\partial\mcY)_0$ has no intersection with the open orbit $\tilde{\bH}_t\cdot\tau(0)$ on $\mcY_0$, implying $(\partial\mcY)_0\subset\partial\mcY_0$ as sets.

Since $(\partial\mcY)_t=\partial\mcY_t $ is the anticanonical divisor on $\mcY_t$ when $t\neq 0$, $-K_{\mcY_0}$ is given by the divisor $(\partial\mcY)_0$ (as a scheme-theoretic divisor, so each irreducible component has a multiplicity). As $\mcY_0$ is an equivariant compactification of a vector group, the support of its $\mbG_a^2$-stable anticanonical divisor is the whole boundary by \cite[Theorem 2.7]{HT}. It follows that $(\partial\mcY)_0=\partial\mcY_0$ as sets.
\end{proof}

In the following, we will construct an involution that acts well on $\mathcal{Y}/\Delta$.

\begin{lem}
There is a direct sum decomposition of vector bundles $\mcV=\mcW\oplus \mathcal{M}$ over $\Delta$ which is a direct sum decomposition of irreducible $\mathfrak{so}_3(\mathbb{A})$-modules for all $t \in \Delta$.  Moreover, $\mcW_t\cong\mathfrak{so}_3(\mbA)$ for all $t\in \Delta$, while $\mathcal{M}_t=\SJ_3(\mbA)_0$ for $t \neq 0$ and $\mathcal{M}_0=\mathbb{C}^n$ is the radical of the Lie algebra $ \mcV_0$, where $n=\dim X(\mathbb{A})$.
\end{lem}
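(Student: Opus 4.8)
The plan is to realize $\mathcal{M}$ as the orthogonal complement of $\mcW$ inside $\mcV$ with respect to the fibrewise Killing form, and then to verify that this complement behaves well at the special fibre $t=0$, where $\mcV_0$ is no longer semisimple.

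First I would equip $\mcV$ with its relative Killing form. The fibrewise Lie bracket makes $\mcV$ a bundle of Lie algebras, so $\operatorname{ad}\colon\mcV\to\End(\mcV)$ is a holomorphic bundle map and $B\colon\mcV\otimes\mcV\to\mcO_\Delta$, $B_t(x,y)=\tr(\operatorname{ad}_x\operatorname{ad}_y)$, is a holomorphic section of $\Sym^2\mcV^*$. Since $B$ is $\operatorname{ad}(\mcV)$-invariant and $\mcW\subset\mcV$ is a subbundle of subalgebras, the bundle map $\phi\colon\mcV\to\mcW^*$, $v\mapsto B(v,-)|_{\mcW}$, has fibrewise kernel $\mcW_t^\perp$ which is an $\mcW_t$-submodule of $\mcV_t$ (the Killing form is $\operatorname{ad}(\mcW_t)$-invariant and $\mcW_t$ is $\operatorname{ad}(\mcW_t)$-stable). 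Thus it suffices to show $\phi$ has constant rank: then $\mathcal{M}:=\ker\phi$ is a holomorphic subbundle with each $\mathcal{M}_t$ an $\mcW_t$-submodule, and $\mcV=\mcW\oplus\mathcal{M}$ as soon as $B|_{\mcW}$ is fibrewise nondegenerate (which forces $\mcW_t\cap\mathcal{M}_t=0$).

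The heart of the matter is therefore the fibrewise nondegeneracy of $B|_{\mcW}$. For $t\neq 0$ this is the classical orthogonality of the symmetric pair $(\mathfrak{sl}_3(\mbA),\mathfrak{so}_3(\mbA))$: from $\theta$-invariance of the Killing form of $\mathfrak{sl}_3(\mbA)$ one gets $B_t(\mcW_t,\SJ_3(\mbA)_0)=0$, so $B_t$ restricts nondegenerately to $\mcW_t$ and $\mcW_t^\perp=\SJ_3(\mbA)_0$. For $t=0$, where $\mcV_0\simeq\mathbb{C}^n\rtimes\mathfrak{so}_3(\mbA)$ is not semisimple, I would argue as follows. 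Since $\mcW_0\simeq\mathfrak{so}_3(\mbA)$ is a subalgebra and $\mathbb{C}^n$ an ideal, the decomposition $\mcV_0=\mcW_0\oplus\mathbb{C}^n$ is $\operatorname{ad}(\mcW_0)$-stable, whence for $s,s'\in\mcW_0$
$$
B_0(s,s')=\kappa(s,s')+\tr_{\mathbb{C}^n}(\rho(s)\rho(s')),
$$
where $\kappa$ is the Killing form of $\mcW_0$ and $\rho$ its representation on the radical $\mathbb{C}^n\cong\SJ_3(\mbA)_0$. Both terms are invariant symmetric forms on the simple algebra $\mathfrak{so}_3(\mbA)$, hence proportional to $\kappa$, and the proportionality constant of the trace form is the positive Dynkin index of the nontrivial representation $\rho$. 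Hence $B_0|_{\mcW_0}=(1+c)\kappa$ with $c>0$, so it is nondegenerate.

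It remains to identify the fibres and check irreducibility. Nondegeneracy of $B_0|_{\mcW_0}$ makes $\phi_0\colon\mcV_0\to\mcW_0^*$ surjective, so $\dim\mathcal{M}_0=\dim\mcW_0^\perp=n=\dim\mathcal{M}_t$ for $t\neq0$; thus $\phi$ has constant rank and $\mathcal{M}$ is a rank-$n$ subbundle with $\mcV=\mcW\oplus\mathcal{M}$. Since the nilradical $\mathbb{C}^n$ of $\mcV_0$ lies in $\ker B_0\subseteq\mcW_0^\perp$ and has dimension $n=\dim\mcW_0^\perp$, we obtain $\mathcal{M}_0=\mathbb{C}^n$, the radical. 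Finally $\mcW_t\simeq\mathfrak{so}_3(\mbA)$ is the irreducible adjoint module (as $\mathfrak{so}_3(\mbA)$ is simple for $\mbA\neq\mbC$), while $\mathcal{M}_t\simeq\SJ_3(\mbA)_0$ is irreducible as recorded in the proof of Lemma~\ref{l.Cartan2}; this holds for $t=0$ as well, since the $\mcW_0$-module $\mathbb{C}^n$ is the representation $\SJ_3(\mbA)_0$. The main obstacle is precisely the special fibre: one must rule out that the already degenerate Killing form $B_0$ degenerates further on the Levi factor $\mcW_0$, which is exactly the computation $B_0|_{\mcW_0}=(1+c)\kappa$ with $c>0$ above.
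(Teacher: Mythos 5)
Your construction is correct but takes a genuinely different route from the paper's. The paper never touches the Killing form: it first pins down the abstract $\mathfrak{so}_3(\mbA)$-module structure of \emph{every} fibre, using the injective evaluation map $\mcV_t/\mcW_t\to T_{\tau(t)}\mcX_t$ together with the invariance of the VMRT (Proposition~\ref{p.Invariance}) to conclude $\mcV_t\cong\mathfrak{so}_3(\mbA)\oplus\SJ_3(\mbA)_0$ for all $t$, including $t=0$; since these two irreducible summands are non-isomorphic, the complement of $\mcW_t$ is \emph{unique}, hence automatically varies holomorphically and must coincide at $t=0$ with the visible decomposition of $\mbC^n\rtimes\mathfrak{so}_3(\mbA)$. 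You instead produce $\mcM$ directly as $\mcW^\perp$ for the relative Killing form, and concentrate the work in showing that $B_0$ remains nondegenerate on the Levi factor of the non-reductive central fibre. That computation is sound: $\operatorname{ad}_s$ preserves both $\mcW_0$ and the ideal $\mbC^n$, so $B_0|_{\mcW_0}=\kappa+\tr_{\mbC^n}(\rho(\cdot)\rho(\cdot))=(1+c)\kappa$ with $c\geq 0$ the Dynkin index, and $\mbC^n\subset\ker B_0$ because it is an abelian ideal, so the constant-rank and splitting arguments go through and $\mcM_0=\mbC^n$ follows by dimension count. What your approach buys is that the splitting is constructed without knowing the module structure of the radical in advance; what the paper's approach buys is that this module structure comes out of the argument rather than having to be supplied separately.

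That is exactly where the one real gap sits: your last sentence asserts that the $\mcW_0$-module $\mbC^n$ ``is the representation $\SJ_3(\mbA)_0$,'' which is precisely the irreducibility claim at $t=0$ and does not follow from anything in the Killing-form argument. To close it you need the paper's mechanism (or an equivalent): evaluation of vector fields at $\tau(0)$ gives an injective homomorphism of $\mcW_0$-modules $\mcM_0\cong\mcV_0/\mcW_0\to T_{\tau(0)}\mcX_0$, which is an isomorphism since both sides have dimension $n$, and $T_{\tau(0)}\mcX_0$ is identified with the irreducible $\mathfrak{so}_3(\mbA)$-module $\SJ_3(\mbA)_0$ of Lemma~\ref{l.Cartan2} \emph{via} Proposition~\ref{p.Invariance}. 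With that supplement your proof is complete.
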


\begin{proof}
For each $t\neq 0$, $\mcW_t\cong\mathfrak{so}_3(\mbA)$ is contained in the isotropic subalgebra at $\tau(t)\in\mcX_t$, and $\mcV_t/\mcW_t$ is an irreducible representation of $\mathfrak{so}_3(\mbA)$ isomorphic to the representation $T_{\tau(t)}\mcX_t$ of the isotropic subalgebra. For each $t\in\Delta$, the evaluation of vector fields at $\tau(t)\in\mcX_t$ gives rise to an injective homomorphism of $\mcW_t$-modules $\mcV_t/\mcW_t\to T_{\tau(t)}\mcX_t$, whence $\mcV_t/\mcW_t$ is isomorphic to the irreducible $\mathfrak{so}_3(\mbA)$-module $T_oX(\mbA)$ by Proposition~\ref{p.Invariance} and by the fact that $\dim\mcV_t/\mcW_t=\dim\mcX_t$. Since $\mathfrak{so}_3(\mbA)$ is a simple Lie algebra, for each $t\in\Delta$, the module $\mcV_t$ is isomorphic to $\mathfrak{so}_3(\mbA)\oplus T_oX(\mbA)$. Since the two direct summands $\mathfrak{so}_3(\mbA)$ and $T_oX(\mbA)$ are irreducible modules that are not isomorphic to each other, this decomposition is unique. Then we obtain a direct sum decomposition of the holomorphic family $\mcV/\Delta$ of $\mathfrak{so}_3(\mbA)$-modules $\mcV=\mcW\oplus\mcM$. When $t\neq 0$, the identification $\mcV_t=\mathfrak{sl}_3(\mbA)$ gives rise to the identification $\mcM_t=\SJ_3(\mbA)_0$. When $t=0$, $\mcV_0\simeq \mathbb{C}^n\rtimes \mathfrak{so}_3(\mbA)$ is already a decomposition into irreducible $\mathfrak{so}_3(\mbA)$-modules. By the uniqueness of the decomposition, $\mcM_0=\mbC^n$ is the radical of $\mcV_0$.
\end{proof}

For each $t\in\Delta$, define $\xi_t(\phi)=\phi$ for $\phi\in\mcW_t$ and $\xi_t(\phi)=-\phi$ for $\phi\in\mathcal{M}_t$. This gives an automorphism $\xi$ of the vector bundle $\mcV/\Delta$ of order $2$. When $t\neq 0$, $\xi_t$ is nothing but the involution $ \theta\in\Aut(\mathfrak{so}_3(\mbA))$.

\begin{prop}\label{p.involution-family}\leavevmode
    \begin{enumerate}
        \item\label{p.i-f-1} The element $\xi\in\Aut(\mcV/\Delta)$ induces an involution $\Theta$ of $\mathcal{X}/\Delta$.
        \item\label{p.i-f-2} We have $\tau(\Delta)\subset \mathcal{X}^\Theta$ and $\Theta_t=\theta$ for $t\neq 0$.
        \item\label{p.i-f-3} For each $t\in \Delta$, the induced map $(\Theta_t)_*\in\GL(T_{\tau(t)}\mathcal{X}_t)$ is just $-1$.
        \item\label{p.i-f-4} For each $t\in\Delta$, $\Theta_t(\mathcal{Y}_t)=\mathcal{Y}_t$ and $\Theta_t(\partial \mathcal{Y}_t)=\partial \mathcal{Y}_t$.
    \end{enumerate}
\end{prop}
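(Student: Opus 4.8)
The plan is to build $\Theta$ by integrating $\xi$ to a group automorphism and descending it along the section $\tau$, and then to extract (ii)--(iv) from the resulting intertwining relation on vector fields. The starting point is that the splitting $\mcV=\mcW\oplus\mcM$ is a fibrewise $\mathbb{Z}/2$-grading of Lie algebras. Indeed $\mcW$ is a subalgebra and $\mcM$ is a $\mcW$-module for every $t$, while $[\mcM_t,\mcM_t]\subset\mcW_t$ holds for $t\neq 0$ (the symmetric-pair relation $[\SJ_3(\mbA)_0,\SJ_3(\mbA)_0]\subset\mathfrak{so}_3(\mbA)$) and trivially at $t=0$ (where $\mcM_0=\mbC^n$ is abelian); since the bracket varies holomorphically and $t\neq 0$ is dense, $[\mcM_t,\mcM_t]\subset\mcW_t$ holds for all $t$. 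Hence $\xi_t$, equal to $+1$ on $\mcW_t$ and $-1$ on $\mcM_t$, is an involutive automorphism of the Lie algebra $\mcV_t$ for every $t\in\Delta$ (recovering $\xi_t=\theta$ for $t\neq 0$).

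For (i), let $\mathcal{G}\to\Delta$ be the smooth group scheme whose fibre $\mathcal{G}_t$ is the connected subgroup of $\Aut^0(\mcX_t)$ with Lie algebra $\mcV_t$ (so $\mathcal{G}_t=\Aut^0(\mcX_t)$ for $t\neq 0$ and $\mathcal{G}_0=\mbG_a^n\rtimes\SO_3(\mbA)$); it has constant fibre dimension $\rk\mcV$ and Lie algebra bundle $\mcV$. The fibrewise involution $\xi$ integrates to an involutive automorphism $\Xi$ of $\mathcal{G}$, holomorphic in $t$ since $\xi$ and $\exp$ are. As $\xi$ fixes $\mcW=\mathrm{Lie}(\mathcal{S})$, where $\mathcal{S}\subset\mathcal{G}$ is the isotropy subgroup scheme of $\tau$, the automorphism $\Xi$ preserves $\mathcal{S}$, and therefore $\Theta(g\cdot\tau):=\Xi(g)\cdot\tau$ is a well-defined $\Delta$-involution of the relative open orbit $\mathcal{G}\cdot\tau\subset\mcX$. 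On fibres it is realised geometrically: for $t\neq 0$ it is the extension of $\theta$ from $\SL_3(\mbA)/\SO_3(\mbA)$ to the Picard-number-one completion $X(\mbA)$ furnished by Proposition~\ref{p.X(A)properties}\eqref{p.X(A)p-2}, whereas on $\mcX_0$ the descended map is $x\mapsto -x$ on the open orbit $\mbG_a^n$, which is realised by the element $-1$ of the extra one-parameter subgroup $\mbC^\ast\subset\Aut^0(\mcX_0)$ and is thus already a global automorphism. \textbf{The main obstacle} I anticipate is exactly the passage across $t=0$: the fibres of $\mathcal{G}$ change isomorphism type there and $\Theta_t$ jumps from the outer involution $\theta$ (for $t\neq 0$) to an inner one (at $t=0$), so one must check that these fibrewise involutions assemble into a single holomorphic $\Delta$-morphism and extend simultaneously across the codimension-one boundary $\partial\mcX_t$ of the projective fibres.

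Granting (i), parts (ii) and (iii) are formal consequences of the defining relation $\Theta_t\circ\exp(v)=\exp(\xi_t v)\circ\Theta_t$ for $v\in\mcV_t$, obtained from $\Theta(g\cdot\tau)=\Xi(g)\cdot\tau$. Taking $g=e$ gives $\Theta_t(\tau(t))=\tau(t)$, so $\tau(\Delta)\subset\mcX^\Theta$; and for $t\neq 0$, $\xi_t=\theta$, so $\Xi_t$ integrates $\theta$ and $\Theta_t=\theta$ under the identification $\mcX_t\simeq X(\mbA)$ carrying $\tau(t)$ to $o$. For (iii) I would differentiate the relation at the fixed point $\tau(t)$: this yields $(\Theta_t)_*\bigl(v(\tau(t))\bigr)=(\xi_t v)(\tau(t))$ for all $v\in\mcV_t$. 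Since the evaluation $\mcM_t\to T_{\tau(t)}\mcX_t$ is an isomorphism (its kernel is the isotropy $\mcW_t$) and $\xi_t=-1$ on $\mcM_t$, we get $(\Theta_t)_*=-\id$ on $T_{\tau(t)}\mcX_t$ for every $t\in\Delta$.

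For (iv), the key point is that $\xi_t$ stabilises $\tilde{\mcH}_t$. Indeed $\mcH_t\subset\mcW_t$ is $\xi_t$-fixed, so $\xi_t$ preserves its centraliser $\tilde{\mcH}_t=\{v\in\mcV_t:[v,\mcH_t]=0\}$. Consequently $\Xi_t(\tilde{\bH}_t)=\tilde{\bH}_t$, and hence $\Theta_t(\tilde{\bH}_t\cdot\tau(t))=\tilde{\bH}_t\cdot\Theta_t(\tau(t))=\tilde{\bH}_t\cdot\tau(t)$. Passing to closures and invoking Proposition~\ref{p.reduction}\eqref{p.r-2} gives $\Theta_t(\mcY_t)=\mcY_t$; and since $\Theta_t$ preserves the open $\tilde{\bH}_t$-orbit it also preserves its complement, so $\Theta_t(\partial\mcY_t)=\partial\mcY_t$. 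For $t\neq 0$ this recovers the explicit swap $\theta(D_i)=E_i$, $\theta(E_i)=D_i$ of Proposition~\ref{p.involution-Y(A)}.
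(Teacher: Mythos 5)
Your overall architecture coincides with the paper's: define $\Theta$ on the relative open orbit $\mathcal{X}^o=\cup_t G_t\cdot\tau(t)$ by integrating the fibrewise Lie-algebra involution $\xi$, then read off (ii) and (iii) from the intertwining relation via the evaluation isomorphism $\mcM_t\to T_{\tau(t)}\mcX_t$, and get (iv) from the $\xi$-stability of $\tilde{\mcH}$. The one place you genuinely diverge is the step you yourself flag as the main obstacle, namely extending $\Theta_0$ from $\mcX_0^o$ to all of $\mcX_0$. The paper does this by noting that $d\Theta_t$ preserves the VMRT for $t\neq 0$, hence by continuity $d\Theta_0$ preserves the VMRT of $\mcX_0^o$, and then invoking the Cartan--Fubini type extension theorem of Hwang--Mok. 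You instead observe that $\Theta_0$ on the open orbit is $x\mapsto -x$ on $\mathbb{G}_a^n$ and is therefore the restriction of an element of the extra one-parameter subgroup coming from the $\mbC$-summand of $\mathfrak{aut}(\mcX_0)\simeq\mbC^n\rtimes(\mathfrak{so}_3(\mbA)\oplus\mbC)$, so it is already globally defined. This is a valid and more elementary alternative (it uses nothing beyond Proposition~\ref{p.centralfiber}), but it silently relies on two facts worth recording: since $\mbC^n\simeq\SJ_3(\mbA)_0$ is an irreducible nontrivial $\mathfrak{so}_3(\mbA)$-module, Schur's lemma forces the generator $\zeta$ of the $\mbC$-summand to act on $\mbC^n$ by a single nonzero scalar weight $k$, and the same irreducibility shows $\zeta$ can be normalised to lie in the isotropy at $\tau(0)$; the global automorphism realising $x\mapsto -x$ is then $\exp(s\zeta)$ with $e^{ks}=-1$, which is literally the element $-1$ of the $\mbC^\ast$ only when $k=1$. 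With those two lines added, your route cleanly replaces the Cartan--Fubini argument; everything else matches the paper's proof.
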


\begin{proof}
\eqref{p.i-f-1}  Take any $t\in \Delta$. Let $G_t$ be the connected algebraic subgroup of $\Aut^0(\mcX_t)$ with Lie algebra $\mcV_t\subset\aut(\mcX_t)$, and denote by $H_t\subset G_t$ the isotropic subgroup at $\tau(t)\in\mcX_t$. Then $\mcX_t^o:=G_t\cdot\tau(t)\cong G_t/H_t$ is the open orbit of $\mcX_t$.
Recall that $\mcW_t=\{\phi\in\mcV_t\mid \xi(\phi)=\phi\}$.
Then $\xi$ induces a biholomorphic map $\Theta:\mathcal{X}^o:=\cup_{t\in \Delta}\mathcal{X}_t^o\rightarrow\mathcal{X}^o$ over $\Delta$, and $\Theta\circ\Theta=\id$.

When $t\neq 0$, $\Theta_t|_{\mathcal{X}_t^o}=\theta|_{\SL_3(\mathbb{A})/\SO_3(\mathbb{A})}$, and thus $d\Theta_t$ preserves the VMRT of $\mathcal{X}_t^o$. By continuity, $d\Theta_0$ preserves the VMRT of $\mathcal{X}_0^o$. By the extension theorem of Cartan--Fubini type \cite[Main Theorem]{HM01}, we can extend $\Theta_0$ to a biholomorphic map $\mcX_0 \to \mcX_0$.
When $t\neq 0$, we have identifications $\mathcal{X}_t^o=\SL_3(\mathbb{A})/\SO_3(\mathbb{A})$ and $\Theta_t=\theta$. Then~\eqref{p.i-f-2} and~\eqref{p.i-f-3} follow.

    \eqref{p.i-f-4} Note that $\mcH\subset\mcW$ and $\tilde{\mcH}\subset\mcW$ are $\xi$-stable. Then the open orbit of $\mcY_t$, $t\in\Delta$, is $\Theta_t$-stable. Hence the closure $\mcY_t$ and the boundary $\partial\mcY_t$ is $\Theta_t$-stable.
\end{proof}

\subsection{The central fiber as a blowup of $\mathbb{P}^2$}

Recall that general fibers of the smooth family $\mcY/\Delta$ of rational projective surfaces are of Picard number~$4$, as is the special fiber $\mcY_0$. As $\mcY_0$ is an equivariant compactification of a vector group, its boundary is of pure codimension $1$ and spans $\Pic(\mcY_0)$ freely. In particular, $\partial \mathcal{Y}_0$ has four irreducible components, say $F_0$, $F_1$, $F_2$ and $F_3$; thus $\Pic(\mcY_0)=\oplus_{i=0}^3\mbZ[F_i]$.

Denote by $\mcD_i$ (resp.\ $\mcE_j$) the prime divisor on $\mcY/\Delta$ such that $\mcD_{i, t}=D_i$ (resp.\ $\mcE_{j, t}=E_j$) under the identification $\mcY_t=Y(\mbA)$ for $t\neq 0$. By Lemma~\ref{l.bound-Y0}, the divisors $\mcD_{i, 0}$ and $\mcE_{j, 0}$ of $\mcY_0$ lie in the boundary. We will find out what $\mcD_{i, 0}$ and $\mcE_{j, 0}$ are in the following.

There is an $\SO_3(\mbA)$-action on the family $\mcX/\Delta$ that is isotropic along the section $\tau(\Delta)$, and the associated Lie algebra is $\mcW_t\cong\mathfrak{so}_3(\mbA)$ for each $t\in\Delta$.

\begin{lem}\label{l.S3-family Y}
For each $t\in\Delta$, the subvariety $\mcY_t\subset\mcX_t$ as well as the boundary $\partial\mcY_t$ are stable under the action of $\mathfrak{S}_3\subset\SO_3(\mbA)$. Furthermore, $\sigma\cdot\mcD_i=\mcD_{\sigma(i)}$ and $\sigma\cdot\mcE_j=\mcE_{\sigma(j)}$ for $\sigma\in\mathfrak{S}_3$ and $1\leq i, j\leq 3$.
\end{lem}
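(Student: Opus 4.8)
The plan is to reduce to the fibres $t\neq 0$, transport the conclusion of Proposition~\ref{p.S3action} through the identification $\mcY_t\cong Y(\mbA)$, and then propagate everything to $t=0$ by a closure argument in the family.

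\textbf{Setting up an $\mathfrak{S}_3$-stable torus.} First I would arrange the construction so that the family of Cartan subalgebras $\mcH\subset\mcW$ is stable under the $\mathfrak{S}_3$-action. Since $\mathfrak{S}_3\subset\SO_3(\mbA)$ acts on the bundle $\mcV=\mcW\oplus\mcM$ of Lie algebras (each summand being an $\SO_3(\mbA)$-submodule) and $\mathfrak{S}_3$ is solvable, every fibre $\mcW_t\cong\mathfrak{so}_3(\mbA)$ contains an $\mathfrak{S}_3$-stable Cartan subalgebra, and these can be chosen to form an $\mathfrak{S}_3$-stable subbundle $\mcH$. Because $\tilde{\mcH}_t=\{v\in\mcV_t\mid[v,\mcH_t]=0\}$ is the centralizer of $\mcH_t$, the $\mathfrak{S}_3$-stability of $\mcH$ forces $\tilde{\mcH}$ to be $\mathfrak{S}_3$-stable as well. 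Hence each $\sigma\in\mathfrak{S}_3$ normalizes both tori $\bH_t$ and $\tilde{\bH}_t$.

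\textbf{The fibres $t\neq 0$.} Fix $t\neq 0$. Since the $\SO_3(\mbA)$-action is isotropic along $\tau(\Delta)$, each $\sigma\in\mathfrak{S}_3$ fixes $\tau(t)$ and normalizes $\tilde{\bH}_t$, so it stabilizes the orbit $\tilde{\bH}_t\cdot\tau(t)$ and therefore its closure $\mcY_t$ (Proposition~\ref{p.reduction}\eqref{p.r-2}) together with the boundary $\partial\mcY_t$. Under the identification $\mcY_t\cong Y(\mbA)$ coming from Proposition~\ref{p.MaxTorus}\eqref{p.MT-2} and Proposition~\ref{p.reduction}\eqref{p.r-3}, the induced action of $\mathfrak{S}_3$ on the quotient torus $\tilde{\bH}_t/\bH_t\cong T_0$ is the permutation action of Lemma~\ref{l.T0torus}\eqref{l.TO-3}, so it agrees with the $\mathfrak{S}_3$-action on $Y(\mbA)$ analysed in Proposition~\ref{p.S3action}. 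That proposition then yields $\sigma\cdot\mcD_{i,t}=\mcD_{\sigma(i),t}$ and $\sigma\cdot\mcE_{j,t}=\mcE_{\sigma(j),t}$ for all $t\neq 0$.

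\textbf{Passing to $t=0$.} Finally I would extend to the central fibre by continuity. The $\mathfrak{S}_3$-action on $\mcX$ commutes with $\pi$ and preserves $\mcY=\overline{\cup_{t\neq 0}\mcY_t}$, hence stabilizes $\mcY_0$; as it preserves the open $\tilde{\bH}$-orbit, it also preserves $\partial\mcY_0$ (using Lemma~\ref{l.bound-Y0}). For the divisors, $\sigma\cdot\mcD_i$ is a prime divisor on $\mcY$ which coincides with $\mcD_{\sigma(i)}$ over every $t\neq 0$; since both are the closures of their restrictions to $\Delta\setminus\{0\}$, they agree on all of $\mcY$, and likewise $\sigma\cdot\mcE_j=\mcE_{\sigma(j)}$. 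Restricting to the central fibre gives the statement at $t=0$.

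I expect the main obstacle to be the first step: guaranteeing that the $\mathfrak{S}_3$-action genuinely normalizes the family torus $\tilde{\bH}$ — that is, that $\mcH$ may be taken $\mathfrak{S}_3$-stable and that the resulting identification $\mcY_t\cong Y(\mbA)$ intertwines the two $\mathfrak{S}_3$-actions without an extra twist (for instance by an element of the Weyl group). Once the two actions are matched, the transport of the divisor labels and the specialization to $t=0$ are routine.
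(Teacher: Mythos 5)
Your proposal is correct and follows essentially the same route as the paper, whose entire proof is: Proposition~\ref{p.S3action} gives the $\mathfrak{S}_3$-stability of $\mcY_t$, $\partial\mcY_t$ and the permutation of the divisors for $t\neq 0$, and continuity then handles $t=0$ and forces $\sigma\cdot\mcD_i=\mcD_{\sigma(i)}$, $\sigma\cdot\mcE_j=\mcE_{\sigma(j)}$ as closures of their restrictions over $\Delta\setminus\{0\}$. The compatibility issue you single out (arranging $\mcH$ to be $\mathfrak{S}_3$-stable so that the identification $\mcY_t\cong Y(\mbA)$ intertwines the two actions) is left implicit in the paper, so your extra step is a reasonable elaboration rather than a departure.
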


\begin{proof}
By Proposition~\ref{p.S3action}, $\mcY_t$ and $\partial\mcY_t$ are $\mathfrak{S}_3$-stable for $t \neq 0$.  By continuity, so are $\mcY_0$ and $\partial\mcY_0$. Since $\sigma\cdot\mcD_{i, t}=\mcD_{\sigma(i), t}$ and $\sigma\cdot\mcE_{j, t}=\mcE_{\sigma(j), t}$ for $t\neq 0$, we have  $\sigma\cdot\mcD_i=\mcD_{\sigma(i)}$ and $\sigma\cdot\mcE_j=\mcE_{\sigma(j)}$.
\end{proof}

\begin{prop} The four irreducible components of $\partial \mathcal{Y}_0$ can be denoted by $F_0$, $F_1$, $F_2$, $F_3$ such that $\sigma(F_0)=F_0$ and $\sigma(F_i)=F_{\sigma(i)},$ where $\sigma\in\mathfrak{S}_3$ and $i=1,2,3$.
\end{prop}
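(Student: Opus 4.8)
The plan is to read off the permutation action of $\mathfrak{S}_3$ on the four boundary components entirely from its linearization on $\Pic(\mcY_0)$, matching it against the invariant ranks already computed in Proposition~\ref{p.Picard-invariant}.

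First I would note that, by Lemma~\ref{l.S3-family Y}, the group $\mathfrak{S}_3\subset\SO_3(\mbA)$ preserves $\mcY_0$ and its boundary $\partial\mcY_0$, hence permutes the four irreducible components $\{F_0,F_1,F_2,F_3\}$. As these components form a $\mbZ$-basis of $\Pic(\mcY_0)$, the induced action on $\Pic(\mcY_0)$ is precisely the permutation representation attached to the action on this four-element set; consequently, for every $\sigma\in\mathfrak{S}_3$ the rank of $\Pic(\mcY_0)^{\sigma}$ equals the number of $\langle\sigma\rangle$-orbits on $\{F_0,\dots,F_3\}$.

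The crucial input is that these invariant ranks coincide with those of the general fibre $Y(\mbA)$. Since $\mcY\to\Delta$ is a smooth family of rational surfaces over the disc, one has $H^1(\mcO_{\mcY_t})=H^2(\mcO_{\mcY_t})=0$, so $\Pic(\mcY_t)\cong H^2(\mcY_t,\mbZ)$ with no transcendental part, and the local system $R^2\pi_*\mbZ$ is trivial. This yields canonical identifications $\Pic(\mcY_0)\cong\Pic(\mcY_t)\cong\Pic(Y(\mbA))$ for $t\neq0$, which are $\mathfrak{S}_3$-equivariant because $\mathfrak{S}_3$ acts on the whole family $\mcY/\Delta$. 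Reconciling the degeneration with the group action is the step I expect to demand the most care. Granting it, Proposition~\ref{p.Picard-invariant} gives $\rk\Pic(\mcY_0)^{\sigma_{123}}=2$ and $\rk\Pic(\mcY_0)^{\sigma_{12}}=3$.

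The remainder is a short orbit count. The element $\sigma_{123}$ has order $3$, so its orbits on $\{F_0,\dots,F_3\}$ have size $1$ or $3$; having exactly two of them forces the pattern $3+1$, so $\sigma_{123}$ fixes a unique component, which I call $F_0$, and cyclically permutes the other three. Because $\langle\sigma_{123}\rangle$ is normal in $\mathfrak{S}_3$, its fixed-component set $\{F_0\}$ is $\mathfrak{S}_3$-stable, whence $\sigma(F_0)=F_0$ for every $\sigma$. Thus $\mathfrak{S}_3$ acts on the remaining triple $\{F_1,F_2,F_3\}$, and $\sigma_{123}$ acts there as a $3$-cycle; moreover $\sigma_{12}$ cannot act trivially on the triple, for otherwise it would fix all four components and force $\rk\Pic(\mcY_0)^{\sigma_{12}}=4$, contradicting the value $3$. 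Hence the action on $\{F_1,F_2,F_3\}$ is the faithful transitive $\mathfrak{S}_3$-action, i.e.\ the standard one, and after relabelling we obtain $\sigma(F_i)=F_{\sigma(i)}$ for $i=1,2,3$, completing the proof.
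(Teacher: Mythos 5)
Your proposal is correct and follows essentially the same strategy as the paper: both transfer the ranks of the $\mathfrak{S}_3$-invariant Picard groups from the general fibre to $\mcY_0$ via the equivariant identification $\Pic(\mcY_0)\cong\Pic(\mcY_t)$ and then equate these ranks with orbit counts on the set of boundary components (the paper cites \cite[Lemma 4.16]{FL} for this step, which you rederive from the fact that the components freely generate $\Pic(\mcY_0)$). The only difference is the combinatorial endgame: the paper works with $\sigma_{12}$ and $\sigma_{23}$ and rules out bad configurations by showing $\sigma_{321}$ would act trivially, whereas you start from the $3{+}1$ orbit pattern of $\sigma_{123}$ and use normality of $\langle\sigma_{123}\rangle$ — a slightly cleaner but equivalent bookkeeping.
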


\begin{proof}
For any $t\in \Delta$,
the restriction map $\Pic(\mcY/\Delta)\to\Pic(\mcY_t)$ is an isomorphism which is compatible with the $\mathfrak{S}_3$-action. Then $\Pic(\mcY_0)^\Gamma\simeq\Pic(\mcY_t)^\Gamma$ for any subgroup $\Gamma$ of $\mathfrak{S}_3$ and any $t\neq 0$. In particular, $\Pic(\mathcal{Y}_0)^{\sigma_{12}}\simeq \Pic(\mathcal{Y}_t)^{\sigma_{12}}$ is of rank 3 by Proposition~\ref{p.Picard-invariant}.

 Let $\mcS:=\{F_0,F_1,F_2,F_3\}$ be the set of irreducible components of $\partial\mathcal{Y}_0$.  By \cite[Lemma 4.16]{FL}, the rank of $\Pic(\mathcal{Y}_0)^{\sigma_{12}}$ is given by the number of $\sigma_{12}$-orbits on $\mcS$. Hence $\mcS$ consists of three orbits under the $\sigma_{12}$-action; we may assume they are $\{F_1,F_2\}$, $\{F_3\}$ and $\{F_0\}$.

Applying the action of $\sigma_{23}$ on $\mathcal{Y}/\Delta$, by a similar argument there exists a subset $\mcT \subset\mcS$ such that $\mcT$ consists of two elements permuted by $\sigma_{23}$ and each element of the set $\mcS\setminus \mcT$ is  $\sigma_{23}$-stable.

We claim that $\mcT\cap\{F_1,F_2\}$ consists of a unique element. Otherwise, either $\mcT=\{F_1,F_2\}$ or $\mcT=\{F_0,F_3\}$. In both cases, the action of $\sigma_{123}=\sigma_{12}\circ\sigma_{23}$ on $\mcS$ is of order at most $2$, and thus $\sigma_{321}=\sigma_{123}\circ\sigma_{123}$ acts trivially on $\mcS$. It follows that $\Pic(\mathcal{Y}_0)^{\sigma_{321}}\simeq \Pic(\mathcal{Y}_0)$ is of rank~$4$. However, $\Pic(\mathcal{Y}_0)^{\sigma_{321}}\simeq\Pic(\mathcal{Y}_t)^{\sigma_{321}}$ is of rank $2$ by Proposition~\ref{p.Picard-invariant}, which gives a contradiction.

By the claim above, we may assume $\mcT = \{F_2, F_3\}$ up to re-ordering. It follows  that $\sigma(F_0)=F_0$ and $\sigma(F_i)=F_{\sigma(i)}$, $i=1,2,3$, for any $\sigma\in \mathfrak{S}_3$.
\end{proof}

\begin{lem} \label{l.Y0-D-E}
There are non-negative integers $d_0$, $d_1$, $d_2$ and $e_0$, $e_1$, $e_2$ such that for $\sigma\in \mathfrak{S}_3$,
\begin{eqnarray*}
\mcD_{1,0}=d_0F_0+d_1F_1+d_2F_2+d_2F_3, & \mcD_{\sigma(1),0}=d_0F_0+d_1F_{\sigma(1)}+d_2F_{\sigma(2)}+d_2F_{\sigma(3)}, \\
\mcE_{1,0}=e_0F_0+e_1F_1+e_2F_2+e_2F_3, & \mcE_{\sigma(1),0}=e_0F_0+e_1F_{\sigma(1)}+e_2F_{\sigma(2)}+e_2F_{\sigma(3)}.
\end{eqnarray*}
\end{lem}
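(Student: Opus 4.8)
The plan is to write each central-fiber boundary divisor in the basis $\{F_0,F_1,F_2,F_3\}$ of $\Pic(\mcY_0)$ and then use the $\mathfrak{S}_3$-symmetry to constrain the coefficients. First I would record effectiveness and support. Since $\mcD_1$ is a prime divisor on the smooth threefold $\mcY$, its restriction $\mcD_{1,0}$ to the central fiber is an effective Cartier divisor on $\mcY_0$; as $\mcD_1$ lies in the boundary family $\partial\mcY$, Lemma~\ref{l.bound-Y0} confines its support to $\partial\mcY_0=F_0\cup F_1\cup F_2\cup F_3$. Hence there are non-negative integers $a_0,a_1,a_2,a_3$ with
\[ \mcD_{1,0}=a_0F_0+a_1F_1+a_2F_2+a_3F_3, \]
and likewise $\mcE_{1,0}=b_0F_0+b_1F_1+b_2F_2+b_3F_3$ with all $b_i\geq 0$.

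Next I would exploit the symmetry. By Lemma~\ref{l.S3-family Y} we have $\sigma\cdot\mcD_1=\mcD_{\sigma(1)}$, so the stabilizer of $\mcD_1$ in $\mathfrak{S}_3$ is $\langle\sigma_{23}\rangle$, and thus $\sigma_{23}\cdot\mcD_{1,0}=\mcD_{1,0}$. Since the action on the boundary components satisfies $\sigma_{23}(F_0)=F_0$, $\sigma_{23}(F_1)=F_1$, $\sigma_{23}(F_2)=F_3$ and $\sigma_{23}(F_3)=F_2$, comparing coefficients in $\sigma_{23}\cdot\mcD_{1,0}=\mcD_{1,0}$ forces $a_2=a_3$. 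Setting $d_0:=a_0$, $d_1:=a_1$, $d_2:=a_2=a_3$ gives the asserted expression for $\mcD_{1,0}$; the identical argument applied to the $\sigma_{23}$-invariant $\mcE_{1,0}$ yields $b_2=b_3$, and I set $e_0:=b_0$, $e_1:=b_1$, $e_2:=b_2=b_3$.

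Finally, applying an arbitrary $\sigma\in\mathfrak{S}_3$ and using $\sigma\cdot\mcD_1=\mcD_{\sigma(1)}$ together with $\sigma(F_0)=F_0$ and $\sigma(F_i)=F_{\sigma(i)}$ transports the formula for $\mcD_{1,0}$ to the stated one for $\mcD_{\sigma(1),0}$, and similarly for $\mcE_{\sigma(1),0}$. The argument is essentially formal once the two inputs are in place, so there is no serious obstacle; the only point I would state with care is the effectiveness with non-negative integer coefficients of $\mcD_{1,0}$ and $\mcE_{1,0}$, which rests on their being the central fibers of the prime boundary divisors $\mcD_1$ and $\mcE_1$ on the smooth total space $\mcY$, together with Lemma~\ref{l.bound-Y0} pinning their support inside $\partial\mcY_0$.
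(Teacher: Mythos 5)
Your proposal is correct and follows essentially the same route as the paper: write $\mcD_{1,0}$ and $\mcE_{1,0}$ as non-negative integer combinations of $F_0,\dots,F_3$ using Lemma~\ref{l.bound-Y0}, transport the expression by the $\mathfrak{S}_3$-action from Lemma~\ref{l.S3-family Y}, and use that $\sigma_{23}$ fixes $\mcD_1$ and $\mcE_1$ while swapping $F_2$ and $F_3$ to force the equality of the last two coefficients. The paper's proof is a condensed version of exactly this argument.
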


\begin{proof}
Since $\mcD_{i, t}$ and $\mcE_{i, t}$ are contained in $\partial\mcY_t$ when $t\neq 0$, the same holds when $t=0$. Then there exist non-negative integers $d_0$, $d_1$, $d_2$, $d_3$ such that $\mcD_{1, 0}=d_0F_0+d_1F_1+d_2F_2+d_3F_3.$ For each $\sigma\in \mathfrak{S}_3$, we have $\mcD_{\sigma(1),0}=d_0F_0+d_1F_{\sigma(1)}+d_2F_{\sigma(2)}+d_3F_{\sigma(3)}$.
Applying the formula to $\sigma=\sigma_{23}$, we have $\mcD_{1, 0}=d_0F_0+d_1F_1+d_2F_3+d_3F_2$, implying $d_2=d_3$. The conclusions for $\mcE_{1, 0}$ and $\mcE_{\sigma(1), 0}$ can be obtained similarly.
\end{proof}

Now we compute the anticanonical divisor of the central fiber $\mcY_0$.

\begin{cor} \label{c.-K_Y0}
We have $-K_{\mathcal{Y}_0}=3(d_0+e_0)F_0+(d_1+2d_2+e_1+2e_2)(\sum_{i=1}^3F_i)$. Moreover, $d_0+e_0\geq 1$ and $d_1+e_2=d_2+e_1\geq 1$.
\end{cor}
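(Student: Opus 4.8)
The plan is to obtain the anticanonical class of $\mcY_0$ by degenerating the well-understood anticanonical class of the general fiber, and then to read off the two numerical consequences. First I would record that each general fiber $\mcY_t\simeq Y(\mbA)$ (for $t\neq 0$) is the smooth toric del Pezzo surface of degree $6$, whose anticanonical divisor is the sum of its six torus-invariant boundary curves; that is, $-K_{\mcY_t}=\sum_{i=1}^3\mcD_{i,t}+\sum_{j=1}^3\mcE_{j,t}$. Since the restriction map $\Pic(\mcY/\Delta)\to\Pic(\mcY_t)$ is an isomorphism and the relative anticanonical class restricts to $-K_{\mcY_t}$ on each fiber, this identity holds in $\Pic(\mcY/\Delta)$ and hence specializes to $-K_{\mcY_0}=\sum_{i=1}^3\mcD_{i,0}+\sum_{j=1}^3\mcE_{j,0}$. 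Substituting the $\mathfrak{S}_3$-symmetric expressions of Lemma~\ref{l.Y0-D-E} and summing over $i$ (resp.\ $j$), the three components $F_1,F_2,F_3$ enter symmetrically, so their coefficients coincide; a short bookkeeping then gives $-K_{\mcY_0}=3(d_0+e_0)F_0+(d_1+2d_2+e_1+2e_2)\sum_{i=1}^3F_i$.

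For the equality $d_1+e_2=d_2+e_1$, I would invoke the linear equivalence $D_1-E_1\equiv_{\lin}D_2-E_2$ of Lemma~\ref{l.D-E-lin-equivalence}, which, by the same specialization principle, lifts to $\mcD_1-\mcE_1\equiv_{\lin}\mcD_2-\mcE_2$ on $\mcY/\Delta$ and descends to a relation between $\mcD_{1,0}-\mcE_{1,0}$ and $\mcD_{2,0}-\mcE_{2,0}$ in $\Pic(\mcY_0)$. Expanding both differences in the basis $\{F_0,F_1,F_2,F_3\}$ via Lemma~\ref{l.Y0-D-E} and comparing the coefficient of $F_1$ yields $d_1-e_1=d_2-e_2$, which is exactly $d_1+e_2=d_2+e_1$.

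Finally, the two positivity statements. The bound $d_0+e_0\geq 1$ is immediate: by \cite[Theorem 2.7]{HT} every coefficient of $-K_{\mcY_0}$ along $\partial\mcY_0$ is at least $2$, while the coefficient of $F_0$ above is $3(d_0+e_0)$. The genuinely non-formal point — the one I expect to be the main obstacle — is $d_1+e_2\geq 1$, since the Hwang--Tschinkel bound only yields $d_1+2d_2+e_1+2e_2\geq 2$, which does not by itself force $d_1+e_2>0$. To close this gap I would argue by contradiction: if $d_1+e_2=0$ then, using the equality just proved, all four integers $d_1,d_2,e_1,e_2$ vanish, so Lemma~\ref{l.Y0-D-E} forces every $\mcD_{i,0}$ and $\mcE_{j,0}$ to be a multiple of $F_0$ alone. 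But by Lemma~\ref{l.bound-Y0} the union of the supports of the $\mcD_{i,0}$ and $\mcE_{j,0}$ is the entire boundary $\partial\mcY_0=F_0\cup F_1\cup F_2\cup F_3$; since $F_1$ is an irreducible component distinct from $F_0$, this is a contradiction, and therefore $d_1+e_2\geq 1$.
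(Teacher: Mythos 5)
Your proposal is correct and follows essentially the same route as the paper: the formula comes from specializing $-K_{\mcY/\Delta}=\sum_i(\mcD_i+\mcE_i)$ and Lemma~\ref{l.Y0-D-E}, the equality $d_1+e_2=d_2+e_1$ comes from specializing the linear equivalence of Lemma~\ref{l.D-E-lin-equivalence} and comparing coefficients, and the positivity comes from the fact that the boundary of a $\mathbb{G}_a^2$-compactification is the full support of $-K_{\mcY_0}$. Your closing of the ``gap'' for $d_1+e_2\geq 1$ via Lemma~\ref{l.bound-Y0} is a harmless variant of the paper's direct use of \cite[Theorem 2.7]{HT} (if $d_1=d_2=e_1=e_2=0$ the coefficient of $F_1$ in $-K_{\mcY_0}$ vanishes, contradicting that the support is the whole boundary), so the two arguments are essentially identical.
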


\begin{proof}
Since $-K_{Y(\mbA)}=\sum_{i=1}^3(D_i+E_i)$, we have $-K_{\mcY/\Delta}=\sum_{i=1}^3(\mcD_i+\mcE_i)$. It follows that $-K_{\mathcal{Y}_0}=3(d_0+e_0)F_0+(d_1+2d_2+e_1+2e_2)\sum_{i=1}^3F_i$.

When $t\neq 0$, we have $\mcD_{1, t}-\mcD_{2, t}=\mcE_{1, t}-\mcE_{2, t}\in\Pic(\mcY_t)=\Pic(Y(\mbA))$ by Lemma~\ref{l.D-E-lin-equivalence}. Then the same holds for $t=0$ by applying the identifications $\Pic(\mcY_0)=\Pic(\mcY/\Delta)=\Pic(Y(\mbA))$. On the other hand, by Lemma~\ref{l.Y0-D-E}, we have $\mcD_{1, 0}-\mcD_{\sigma_{12}(1), 0}=(d_1-d_2)(F_1-F_2)$ and $\mcE_{1, 0}-\mcE_{\sigma_{12}(1), 0}=(e_1-e_2)(F_1-F_2)$. It follows that $d_1-d_2=e_1-e_2$, \textit{i.e.} $d_1+e_2=d_2+e_1$.

By \cite[Theorem 2.7]{HT}, the support of $-K_{\mcY_0}$ is the whole boundary, which implies $d_0+e_0\geq 1$ and $d_1+e_2=d_2+e_1\geq 1$.
\end{proof}

To prove that $\mathcal{Y}_0$ is the blowup of $\mathbb{P}^2$ along three colinear points, we start with the following.

\begin{prop}[\textit{cf.} \protect{\cite[Section~5]{HT}}] \label{p.equivariant-min-surf}
Every $\mathbb{G}_a^2$-surface admits a $\mathbb{G}_a^2$-equivariant morphism onto $\mathbb{P}^2$ or a Hirzebruch surface $\mathbb{F}_n$. The boundary of\, $\mbP^2$ consists of a unique line. The boundary of\, $\mbF_n$ consists of two lines; one is a fiber, and the other is a minimal section.
\end{prop}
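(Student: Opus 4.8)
The plan is to run a $\mathbb{G}_a^2$-equivariant minimal model program on the smooth projective surface $Y$. Since $Y$ contains the open orbit $O\cong\mathbb{A}^2$ as a dense open subset, it is rational, and the minimal models of rational surfaces are classified as $\mbP^2$ and the Hirzebruch surfaces $\mbF_n$. Thus the first assertion amounts to making the contractions equivariant, while the last two assertions are an analysis of the boundary on the minimal model.

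First I would establish the key lemma that every $(-1)$-curve $C\subset Y$ is $\mathbb{G}_a^2$-stable and contained in $\partial Y$. As $\mathbb{G}_a^2$ is connected, it acts trivially on the discrete group $\Pic(Y)$ (here $\Pic=\mathrm{NS}$ since $Y$ is rational), so $g\cdot C\equiv C$ for all $g$; but a $(-1)$-curve is the unique effective divisor in its numerical class, whence $g\cdot C=C$. Moreover a $\mathbb{G}_a^2$-stable irreducible curve cannot meet $O$, because the orbit map is free and transitive on $O$, so a stable curve meeting $O$ would contain the whole two-dimensional orbit; hence $C\subset\partial Y$. Consequently the contraction of $C$ is $\mathbb{G}_a^2$-equivariant and restricts to an isomorphism on $O$, so the image is again a smooth projective equivariant compactification of $\mathbb{G}_a^2$, with Picard number dropping by one. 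Iterating terminates at a minimal rational surface $Y_{\min}$, equal to $\mbP^2$ or some $\mbF_n$, and the composite $Y\to Y_{\min}$ is the desired equivariant morphism.

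For the boundary count I would use the units–Picard exact sequence for $U=Y_{\min}\setminus\partial Y_{\min}\cong\mathbb{A}^2$:
$$
0\to\mathcal{O}^*(Y_{\min})\to\mathcal{O}^*(U)\to\bigoplus_i\mathbb{Z}[B_i]\to\Pic(Y_{\min})\to\Pic(U)\to 0.
$$
Since $\mathcal{O}^*(Y_{\min})=\mathbb{C}^*=\mathcal{O}^*(\mathbb{A}^2)$ and $\Pic(\mathbb{A}^2)=0$, the first map is an isomorphism and the number of boundary components equals $\rho(Y_{\min})$. Hence the boundary of $\mbP^2$ is irreducible, and as its class must generate $\Pic(\mbP^2)=\mathbb{Z}$ it is a single line, while the boundary of $\mbF_n$ has exactly two components.

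Finally, to identify the two components on $\mbF_n$, I would push the action down along the ruling $p\colon\mbF_n\to\mbP^1$. The image of $\mathbb{G}_a^2$ in $\PGL_2=\Aut(\mbP^1)$ is a connected unipotent abelian subgroup with an open orbit, hence a copy of $\mathbb{G}_a$ acting on $\mbP^1$ with open orbit $\mathbb{A}^1$ and a single fixed point $\infty$; the fiber $p^{-1}(\infty)$ is then $\mathbb{G}_a^2$-stable and is one boundary component. The one-dimensional kernel acts fiberwise and fixes a section $\Sigma$ (the fiberwise points at infinity), which is the second component. For $n\geq 1$ the surface $\mbF_n$ carries a unique irreducible curve of negative self-intersection, the minimal section, and negative curves lie in the boundary by the lemma above, so $\Sigma$ is forced to be the minimal section; the case $n=0$ is the symmetric degenerate situation in which the two components are fibers of the two rulings. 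I expect the main obstacle to be precisely this last identification — showing that the non-fiber boundary component is the minimal section, and correctly treating the low-degree case $n=0$ — rather than the formal reduction, which is dictated by connectedness of $\mathbb{G}_a^2$.
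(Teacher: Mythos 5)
Your proof is correct, and it is essentially the standard argument: the paper does not prove this proposition itself but cites it from \cite[Section~5]{HT}, and your equivariant-MMP-plus-boundary-count strategy is the one found there. The only two points worth spelling out are that your $(-1)$-curve lemma applies verbatim to any irreducible curve of negative self-intersection (which is exactly what forces the minimal section into the boundary when $n\geq 1$), and that the $n=0$ case is completed by running your fiber argument on both rulings, so that the non-fiber boundary component for one projection is exhibited as a fiber of the other and hence has self-intersection $0$, \emph{i.e.} is a minimal section.
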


\begin{prop}
The central fiber $\mathcal{Y}_0$ is a $\mathbb{G}_a^2$-equivariant blowup of\, $\mathbb{P}^2$, $\mathbb{F}_0$ or $\mathbb{F}_1$.
\end{prop}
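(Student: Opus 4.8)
The plan is to produce a $\mathbb{G}_a^2$-equivariant birational morphism from $\mathcal{Y}_0$ onto a minimal surface and then to pin down that surface using the anticanonical divisor already computed in Corollary~\ref{c.-K_Y0}. By Proposition~\ref{p.equivariant-min-surf} there is a $\mathbb{G}_a^2$-equivariant morphism $f\colon \mathcal{Y}_0\to Z$ with $Z=\mathbb{P}^2$ or $Z=\mathbb{F}_n$. Since $f$ is surjective and equivariant, it carries the open orbit of $\mathcal{Y}_0$ isomorphically onto the open orbit of $Z$, hence is birational, and therefore a composition of blowdowns of $(-1)$-curves. As $\rho(\mathcal{Y}_0)=4$, $\rho(\mathbb{P}^2)=1$ and $\rho(\mathbb{F}_n)=2$, the map $f$ contracts $3$ curves if $Z=\mathbb{P}^2$ and $2$ curves if $Z=\mathbb{F}_n$. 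Each contracted curve is the exceptional divisor of a blowup at a $\mathbb{G}_a^2$-fixed point, and such a point lies on the boundary because the open orbit is acted on freely. Thus $\mathcal{Y}_0$ is a $\mathbb{G}_a^2$-equivariant blowup of $Z$ at boundary points, and the statement reduces to excluding $Z=\mathbb{F}_n$ with $n\geq 2$.

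To carry out this exclusion I would compare anticanonical multiplicities. By Corollary~\ref{c.-K_Y0} one has $-K_{\mathcal{Y}_0}=3a\,F_0+b(F_1+F_2+F_3)$ with $a=d_0+e_0\geq 1$, so $F_0$ has multiplicity $3a\geq 3$, while the three $\mathfrak{S}_3$-conjugate components $F_1,F_2,F_3$ all share one multiplicity $b$; moreover $b\geq 2$ by \cite[Theorem~2.7]{HT}. On the other side, write the boundary of $\mathbb{F}_n$ as $C_0+F$ with $C_0^2=-n$ and $-K_{\mathbb{F}_n}=2C_0+(n+2)F$. A blowup at a boundary point preserves the $-K$-coefficient of the proper transform of every old boundary component, so among the four boundary components of $\mathcal{Y}_0$ the proper transform $\widetilde{C}_0$ carries multiplicity $2$ and $\widetilde{F}$ carries multiplicity $n+2\geq 4$, the two exceptional divisors $G_1,G_2$ carrying the remaining multiplicities.

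The contradiction comes from the forced triple-equality of multiplicities. Since $2\neq n+2$, the curves $\widetilde{C}_0$ and $\widetilde{F}$ cannot both lie in the $\mathfrak{S}_3$-orbit $\{F_1,F_2,F_3\}$, so at least one of them must be the distinguished component $F_0$. If $\widetilde{C}_0=F_0$ then $3a=2$, contradicting $3a\geq 3$. If $\widetilde{F}=F_0$, then $\{\widetilde{C}_0,G_1,G_2\}=\{F_1,F_2,F_3\}$ all have multiplicity $b$, and $\widetilde{C}_0$ forces $b=2$; but the exceptional divisor produced by the first blowup has $-K$-multiplicity $\bigl(\sum_{B\ni p}\mathrm{mult}_B(-K_{\mathbb{F}_n})\bigr)-1$, which equals $n+1$ or $n+3$ when the center $p$ lies on $F$ or on $C_0\cap F$, and equals $1$ when $p$ lies on $C_0$ only, the last case being excluded by \cite[Theorem~2.7]{HT}. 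For $n\geq 2$ none of these values is $2$, so $G_1$ cannot have multiplicity $b=2$, a contradiction. Hence $n\leq 1$ and $Z\in\{\mathbb{P}^2,\mathbb{F}_0,\mathbb{F}_1\}$.

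The hard part is precisely this exclusion of $\mathbb{F}_n$ for $n\geq 2$: everything rests on playing the rigid shape of $-K$ on a blowup of $\mathbb{F}_n$ (two distinguished boundary multiplicities $2$ and $n+2$, plus exceptional multiplicities that are never $2$ for $n\geq 2$) against the triple-equality of boundary multiplicities imposed by Corollary~\ref{c.-K_Y0}, together with the constraint that every boundary multiplicity be at least $2$. As a sanity check, the argument genuinely uses $n\geq 2$ and does not touch $\mathbb{P}^2,\mathbb{F}_0,\mathbb{F}_1$: the eventual model, the blowup of $\mathbb{P}^2$ at three colinear points, realizes $-K_{\mathcal{Y}_0}=3\widetilde{L}+2(E_1+E_2+E_3)$, matching the pattern with $a=1$ and $b=2$.
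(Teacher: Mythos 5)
Your proof is correct and follows essentially the same route as the paper: reduce to $\mathbb{P}^2$ or $\mathbb{F}_n$ via Proposition~\ref{p.equivariant-min-surf}, then rule out $n\geq 2$ by playing the anticanonical boundary coefficients of an iterated blowup of $\mathbb{F}_n$ against the shape $-K_{\mathcal{Y}_0}=3(d_0+e_0)F_0+(\cdots)(F_1+F_2+F_3)$ from Corollary~\ref{c.-K_Y0}. The paper gets the contradiction slightly faster by noting that such a blowup already has at least three distinct boundary coefficients while $-K_{\mathcal{Y}_0}$ has at most two, but your case analysis on which component is $F_0$ amounts to the same comparison.
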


\begin{proof}
Suppose there is a $\mathbb{G}_a^2$-equivariant blowdown $\mathcal{Y}_0\rightarrow \mathbb{F}_n$ with $n\geq 2$.  Let $l_1$ and $l_2$ be two lines of $\mathbb{F}_n$ such that $\mathbb{F}_n\setminus \mathbb{C}^2=l_1\cup l_2$, where $l_1$ is the section of $\mathbb{F}_n\rightarrow \mathbb{P}^1$ and $l_2$ is a fiber of $\mathbb{F}_n\rightarrow \mathbb{P}^1$. Then the anticanonical divisor of $\mathbb{F}_n$ is given by $-K_{\mathbb{F}_n}=2l_2+(n+2)l_1$.

\begin{center}
\tikzset{every picture/.style={line width=0.75pt}} 
\begin{tikzpicture}[x=0.75pt,y=0.75pt,yscale=-1,xscale=1]

\draw   (216,947.25) .. controls (216,933.22) and (227.37,921.85) .. (241.4,921.85) -- (420.6,921.85) .. controls (434.63,921.85) and (446,933.22) .. (446,947.25) -- (446,1023.45) .. controls (446,1037.48) and (434.63,1048.85) .. (420.6,1048.85) -- (241.4,1048.85) .. controls (227.37,1048.85) and (216,1037.48) .. (216,1023.45) -- cycle ;
\draw    (252,1006.85) -- (414,1006.85) ;
\draw    (297,942.85) -- (298,1025.85) ;

\draw (375,1023) node  [color={rgb, 255:red, 0; green, 0; blue, 0 }  ,opacity=1 ,rotate=-358.71]  {$( n+2) l_{1}$};
\draw (277,954) node  [color={rgb, 255:red, 0; green, 0; blue, 0 }  ,opacity=1 ,rotate=-358.71]  {$2l_{2}$};
\draw (416,943) node  [color={rgb, 255:red, 0; green, 0; blue, 0 }  ,opacity=1 ,rotate=-1.12]  {$\mathbb{F}_{n}$};
\end{tikzpicture}
\end{center}

Since $n\geq 2$, the blowup of $\mathbb{F}_n$ along any point on $l_1\cup l_2$ would yield a surface $S$ and an exceptional divisor $E$ such that $-K_S=al_1+bl_2+cE$, where $a,b,c$ are distinct positive integers.

Any further blowup of $S$ will produce a surface $\tilde{S}$ whose anticanonical divisor $-K_{\tilde{S}}$  has at least three distinct coefficients, which is different from the form
 $-K_{\mathcal{Y}_0}=aF_0+b(F_1+F_2+F_3)$.
Hence by Proposition~\ref{p.equivariant-min-surf}, $\mathcal{Y}_0$ is the $\mathbb{G}_a^2$-equivariant blowup of $\mathbb{P}^2$, $\mathbb{F}_0$ or $\mathbb{F}_1$. \end{proof}

\begin{prop} \label{p.Y0-DE}
  There is a $\mathbb{G}_a^2$-equivariant birational morphism $\Phi\colon \mathcal{Y}_0\rightarrow \mathbb{P}^2$ such that $l_0=\Phi(F_0)$ is the boundary $\partial \mathbb{P}^2=\mathbb{P}^2\setminus \mathbb{C}^2$ and $p_i=\Phi(F_i)$, $1\leq i\leq 3$, are three distinct points on $l_0$.
Moreover, $-K_{\mathcal{Y}_0}=3F_0+2(F_1+F_2+F_3)$, and we have
\begin{enumerate}
\item\label{p.YO-DE-1} either $\mcD_{i,0}=F_0+F_i$, $1\leq i\leq 3$, and $\mcE_{j, 0}=F_j$, $1\leq j\leq 3$,
\item\label{p.YO-DE-2} or $\mcD_{i,0}=F_i$, $1\leq i\leq 3$, and $\mcE_{j,0}=F_0+F_j$, $1\leq j\leq 3$.
\end{enumerate}
\end{prop}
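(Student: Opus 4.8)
My plan is to move all intersection-theoretic data onto the fixed Picard lattice of the family, pin the two anticanonical coefficients down to exactly $3$ and $2$, and then exhibit $\Phi$ as an explicit contraction. By flatness of $\mcY\to\Delta$ the restriction $\Pic(\mcY/\Delta)\to\Pic(\mcY_t)$ is an isomorphism preserving intersection numbers and the $\mathfrak{S}_3$-action (as already used above); I will use it to identify $\Pic(\mcY_0)$ with $\Pic(Y(\mbA))=\langle H,E_1,E_2,E_3\rangle$, where $H$ is the pullback of a line. Under this identification $\mcE_{j,0}=E_j$, $\mcD_{i,0}=H-E_j-E_k$ (with $\{i,j,k\}=\{1,2,3\}$), and $-K_{\mcY_0}=\sum_i(\mcD_{i,0}+\mcE_{i,0})=3H-\sum_i E_i$, so $K_{\mcY_0}^2=6$; moreover the intersection numbers $\mcD_{i,0}^2=\mcE_{j,0}^2=-1$, $\mcD_{i,0}\cdot\mcE_{i,0}=0$, $\mcD_{i,0}\cdot\mcE_{j,0}=1$ and $\mcD_{i,0}\cdot\mcD_{j,0}=\mcE_{i,0}\cdot\mcE_{j,0}=0$ (for $i\neq j$) are inherited from $Y(\mbA)$. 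Since the boundary of a vector-group compactification freely generates the Picard group, $\{F_0,F_1,F_2,F_3\}$ is a $\mathbb{Z}$-basis of $\Pic(\mcY_0)$; as $\mathfrak{S}_3$ fixes $F_0$ and permutes $F_1,F_2,F_3$, the class $F_0$ lies in the rank-$2$ invariant sublattice $\langle H,\sum_i E_i\rangle$ of Proposition~\ref{p.Picard-invariant}, and $F_1=\gamma H+\delta E_1+\epsilon(E_2+E_3)$ for suitable integers.

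Next I would pin down the coefficients. Corollary~\ref{c.-K_Y0} gives $-K_{\mcY_0}=3(d_0+e_0)F_0+(d_1+2d_2+e_1+2e_2)\sum_i F_i$ with $d_0+e_0\geq1$ and $d_1+e_2=d_2+e_1\geq1$. Writing down the conditions that $\{F_0,F_1,F_2,F_3\}$ be a $\mathbb{Z}$-basis (the change-of-basis determinant against $\{H,E_1,E_2,E_3\}$ equals $\pm1$), that adjunction $F_j^2+K_{\mcY_0}\cdot F_j=-2$ hold for each smooth rational boundary curve $F_j$, and that $-K_{\mcY_0}=3H-\sum_i E_i$, I obtain an integral system in $(\alpha,\beta,\gamma,\delta,\epsilon)$ and the $(d_\bullet,e_\bullet)$. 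The positivity constraints---every boundary coefficient of $-K_{\mcY_0}$ is $\geq2$ by \cite[Theorem~2.7]{HT}, together with $d_0+e_0\geq1$ and $d_1+e_2=d_2+e_1\geq1$---then eliminate every solution except $F_0=H-\sum_i E_i$ (a $(-2)$-curve) and $F_i=E_i$. Hence $-K_{\mcY_0}=3F_0+2(F_1+F_2+F_3)$, forcing $d_0+e_0=1$ and $d_1+2d_2+e_1+2e_2=2$; combined with $d_1+e_2=d_2+e_1\geq1$ this yields $d_2=e_2=0$, $d_1=e_1=1$ and $(d_0,e_0)\in\{(1,0),(0,1)\}$, which are precisely cases~(\ref{p.YO-DE-1}) and~(\ref{p.YO-DE-2}).

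Finally I would build $\Phi$ from case~(\ref{p.YO-DE-1}), where $F_1,F_2,F_3$ coincide with $\mcE_{1,0},\mcE_{2,0},\mcE_{3,0}$ and so are three pairwise disjoint irreducible $(-1)$-curves, while $F_0$ is a $(-2)$-curve with $F_0\cdot F_i=1$. Being boundary components the $F_i$ are $\mathbb{G}_a^2$-stable, and they are permuted by $\mathfrak{S}_3$, so their simultaneous contraction $\Phi\colon\mcY_0\to Z$ is both $\mathbb{G}_a^2$- and $\mathfrak{S}_3$-equivariant onto a smooth rational surface $Z$ of Picard number $1$, i.e.\ $Z\cong\mathbb{P}^2$. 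The open orbit maps isomorphically onto $\mathbb{C}^2$, so $l_0:=\Phi(F_0)$ is the boundary line $\mathbb{P}^2\setminus\mathbb{C}^2$ (indeed $\Phi(F_0)^2=F_0^2+\sum_i(F_0\cdot F_i)^2=-2+3=1$), and $p_i:=\Phi(F_i)$ are three points lying on $l_0$ (since $F_i\cdot F_0=1$) that are pairwise distinct (since the $F_i$ are disjoint). Case~(\ref{p.YO-DE-2}) follows by composing $\Phi$ with the involution $\Theta$ of Proposition~\ref{p.involution-family}, which interchanges $\mcD_{i,0}$ and $\mcE_{i,0}$.

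The main obstacle is the middle step. Neither the $\mathbb{Z}$-basis determinant condition nor adjunction is finite on its own (the latter cuts out a Pell-type conic of integral solutions), so the determination of $F_0$ and of the $F_i$ rests entirely on combining them with the positivity coming from \cite[Theorem~2.7]{HT} and Corollary~\ref{c.-K_Y0}; carrying out this elimination cleanly, and checking that the boundary components are smooth rational curves so that adjunction is available, is the delicate part.
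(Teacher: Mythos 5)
Your outline is plausible in its first and last thirds, but the step you yourself flag as ``the delicate part'' --- determining the classes of $F_0,F_1,F_2,F_3$ in the lattice $\langle H,E_1,E_2,E_3\rangle$ and hence that $-K_{\mcY_0}=3F_0+2(F_1+F_2+F_3)$ --- is not carried out, and it is precisely the content of the proposition; as proposed it does not close. First, a purely lattice-theoretic elimination cannot single out $F_0=H-\sum_iE_i$, $F_i=E_i$: the Cremona isometry $H\mapsto 2H-\sum_iE_i$, $E_i\mapsto H-E_j-E_k$ preserves the intersection form, $-K$, the $\mathfrak{S}_3$-action, the determinant condition and adjunction, and carries that solution to $F_0=-H+\sum_iE_i$, $F_i=H-E_j-E_k$, which is exactly the lattice configuration of your case~(\ref{p.YO-DE-2}); so your claimed unique output of the elimination is inconsistent with your own subsequent derivation of two cases. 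Second, and more seriously, the inputs you invoke are not all available: adjunction in the form $F_j^2+K\cdot F_j=-2$ presupposes that each boundary component is a smooth rational curve, and the effectivity/positivity constraints needed to kill the remaining integral solutions (e.g.\ those with $a=3(d_0+e_0)\geq 6$ or $\gamma\neq 0$) require knowing which lattice classes can be irreducible curves on $\mcY_0$ --- neither fact is lattice-theoretic, and neither is established in your argument. Note that $H$ is only a flatly transported class on $\mcY_0$, not a priori the pullback of a line under any morphism, so you cannot assume $H$ is nef there.

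The paper supplies exactly this missing geometric input by a different route: it first shows (using the Hassett--Tschinkel structure theory, Proposition~\ref{p.equivariant-min-surf}, and the constraint that $-K_{\mcY_0}=aF_0+b(F_1+F_2+F_3)$ has only two distinct coefficients) that $\mcY_0$ is a $\mbG_a^2$-equivariant iterated blowup of $\mbP^2$, $\mbF_0$ or $\mbF_1$, and then enumerates the possible equivariant blowup sequences in each case, finding that all of them produce the blowup of $\mbP^2$ at three collinear boundary points with $-K_{\mcY_0}=3F_0+2(F_1+F_2+F_3)$. Once that identity is in hand, the coefficient bookkeeping via Corollary~\ref{c.-K_Y0} ($d_0+e_0=1$, $d_1+e_2=d_2+e_1=1$, $d_2+e_2=0$) is the same as yours and correctly yields the two cases. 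If you want to salvage your approach, you would need to first prove that the boundary components of a $\mbG_a^2$-surface are smooth rational curves and to import enough of the equivariant minimal model description to control effectivity --- at which point you have essentially reconstructed the paper's argument. Your final construction of $\Phi$ by contracting the three disjoint $(-1)$-curves $F_1,F_2,F_3$ is fine once their classes are known, though the closing remark that case~(\ref{p.YO-DE-2}) ``follows by composing with $\Theta$'' is off: the two cases are mutually exclusive alternatives produced by the coefficient analysis, not images of one another under the involution (indeed, the whole point of the paper's endgame is that $\Theta_0$ cannot exchange $F_i$ and $F_0+F_i$).
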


\begin{proof}
In the following, we will apply the similar idea of comparing coefficients of $-K_{\mathcal{Y}_0}=aF_0+b(F_1+F_2+F_3)$ and $\mathbb{G}_a^2$-equivariant blowups of $\mathbb{P}^2$, $\mathbb{F}_0=\mathbb{P}^1\times \mathbb{P}^1$ and $\mathbb{F}_1$.

{\it Case 1: blow up from $\mathbb{P}^2$.} There is only one possibility of blowing up $\mathbb{P}^2$ to get $\mathcal{Y}_0$, that is, blow up three distinct points on $l_0=\mathbb{P}^2\setminus{\mathbb{C}^2}$.

\begin{center}
\tikzset{every picture/.style={line width=0.75pt}} 

\begin{tikzpicture}[x=0.75pt,y=0.75pt,yscale=-1,xscale=1]

\draw   (330.14,941.25) .. controls (330.14,931.09) and (338.38,922.85) .. (348.54,922.85) -- (490.6,922.85) .. controls (500.76,922.85) and (509,931.09) .. (509,941.25) -- (509,996.45) .. controls (509,1006.61) and (500.76,1014.85) .. (490.6,1014.85) -- (348.54,1014.85) .. controls (338.38,1014.85) and (330.14,1006.61) .. (330.14,996.45) -- cycle ;
\draw    (347.22,988.44) -- (492.82,988.44) ;
\draw    (387.66,933.92) -- (388.56,1004.63) ;
\draw   (5,948.65) .. controls (5,938.27) and (13.42,929.85) .. (23.8,929.85) -- (151.2,929.85) .. controls (161.58,929.85) and (170,938.27) .. (170,948.65) -- (170,1005.05) .. controls (170,1015.43) and (161.58,1023.85) .. (151.2,1023.85) -- (23.8,1023.85) .. controls (13.42,1023.85) and (5,1015.43) .. (5,1005.05) -- cycle ;
\draw    (28,986.85) -- (147,985.85) ;
\draw    (316,975.85) -- (184,976.29) ;
\draw [shift={(182,976.29)}, rotate = 359.81] [color={rgb, 255:red, 0; green, 0; blue, 0 }  ][line width=0.75]    (10.93,-3.29) .. controls (6.95,-1.4) and (3.31,-0.3) .. (0,0) .. controls (3.31,0.3) and (6.95,1.4) .. (10.93,3.29)   ;
\draw  [line width=3] [line join = round][line cap = round] (47,989.85) .. controls (47,987.52) and (47,985.52) .. (47,987.85) ;
\draw    (418.61,1079.44) -- (418.02,1025.85) ;
\draw [shift={(418,1023.85)}, rotate = 89.37] [color={rgb, 255:red, 0; green, 0; blue, 0 }  ][line width=0.75]    (10.93,-3.29) .. controls (6.95,-1.4) and (3.31,-0.3) .. (0,0) .. controls (3.31,0.3) and (6.95,1.4) .. (10.93,3.29)   ;
\draw   (332.14,1111.25) .. controls (332.14,1101.09) and (340.38,1092.85) .. (350.54,1092.85) -- (492.6,1092.85) .. controls (502.76,1092.85) and (511,1101.09) .. (511,1111.25) -- (511,1166.45) .. controls (511,1176.61) and (502.76,1184.85) .. (492.6,1184.85) -- (350.54,1184.85) .. controls (340.38,1184.85) and (332.14,1176.61) .. (332.14,1166.45) -- cycle ;
\draw    (349.22,1158.44) -- (494.82,1158.44) ;
\draw    (389.66,1103.92) -- (390.56,1174.63) ;
\draw  [line width=3] [line join = round][line cap = round] (417,988.85) .. controls (417,986.52) and (417,984.52) .. (417,986.85) ;
\draw    (421.12,1103.5) -- (422.02,1174.2) ;
\draw   (3.14,1106.05) .. controls (3.14,1095.89) and (11.38,1087.65) .. (21.54,1087.65) -- (163.6,1087.65) .. controls (173.76,1087.65) and (182,1095.89) .. (182,1106.05) -- (182,1161.25) .. controls (182,1171.41) and (173.76,1179.65) .. (163.6,1179.65) -- (21.54,1179.65) .. controls (11.38,1179.65) and (3.14,1171.41) .. (3.14,1161.25) -- cycle ;
\draw    (20.22,1147.24) -- (165.82,1147.24) ;
\draw    (60.66,1098.72) -- (61.56,1169.43) ;
\draw    (92.12,1098.3) -- (93.02,1169) ;
\draw    (134.12,1100.5) -- (135.02,1171.2) ;
\draw    (196,1140.85) -- (320,1140.85) ;
\draw [shift={(322,1140.85)}, rotate = 180] [color={rgb, 255:red, 0; green, 0; blue, 0 }  ][line width=0.75]    (10.93,-3.29) .. controls (6.95,-1.4) and (3.31,-0.3) .. (0,0) .. controls (3.31,0.3) and (6.95,1.4) .. (10.93,3.29)   ;
\draw  [line width=3] [line join = round][line cap = round] (474,1159.85) .. controls (474,1157.52) and (474,1155.52) .. (474,1157.85) ;

\draw (457.77,1002.2) node  [color={rgb, 255:red, 0; green, 0; blue, 0 }  ,opacity=1 ,rotate=-358.71]  {$3l_{0}$};
\draw (369.69,943.42) node  [color={rgb, 255:red, 0; green, 0; blue, 0 }  ,opacity=1 ,rotate=-358.71]  {$2l_{1}$};
\draw (139,1006) node  [color={rgb, 255:red, 0; green, 0; blue, 0 }  ,opacity=1 ,rotate=-358.71]  {$3l_{0}$};
\draw (140.62,949.05) node  [color={rgb, 255:red, 0; green, 0; blue, 0 }  ,opacity=1 ,rotate=-1.12]  {$\mathbb{P}^{2}$};
\draw (45.69,968.42) node  [color={rgb, 255:red, 0; green, 0; blue, 0 }  ,opacity=1 ,rotate=-358.71]  {$p_{1}$};
\draw (459.77,1172.2) node  [color={rgb, 255:red, 0; green, 0; blue, 0 }  ,opacity=1 ,rotate=-358.71]  {$3l_{0}$};
\draw (371.69,1113.42) node  [color={rgb, 255:red, 0; green, 0; blue, 0 }  ,opacity=1 ,rotate=-358.71]  {$2l_{1}$};
\draw (419.57,968.85) node  [color={rgb, 255:red, 0; green, 0; blue, 0 }  ,opacity=1 ,rotate=-358.71]  {$p_{2}$};
\draw (438.69,1113.42) node  [color={rgb, 255:red, 0; green, 0; blue, 0 }  ,opacity=1 ,rotate=-358.71]  {$2l_{2}$};
\draw (282,958) node  [color={rgb, 255:red, 0; green, 0; blue, 0 }  ,opacity=1 ,rotate=-358.71]  {$p_{1}$};
\draw (212,947.85) node [anchor=north west][inner sep=0.75pt]   [align=left] {blow up};
\draw (403,1058) node  [color={rgb, 255:red, 0; green, 0; blue, 0 }  ,opacity=1 ,rotate=-358.71]  {$p_{2}$};
\draw (333,1047.85) node [anchor=north west][inner sep=0.75pt]   [align=left] {blow up};
\draw (164.77,1166) node  [color={rgb, 255:red, 0; green, 0; blue, 0 }  ,opacity=1 ,rotate=-358.71]  {$3l_{0}$};
\draw (42.69,1108.22) node  [color={rgb, 255:red, 0; green, 0; blue, 0 }  ,opacity=1 ,rotate=-358.71]  {$2l_{1}$};
\draw (109.69,1108.22) node  [color={rgb, 255:red, 0; green, 0; blue, 0 }  ,opacity=1 ,rotate=-358.71]  {$2l_{2}$};
\draw (149.69,1109.22) node  [color={rgb, 255:red, 0; green, 0; blue, 0 }  ,opacity=1 ,rotate=-358.71]  {$2l_{3}$};
\draw (291,1125) node  [color={rgb, 255:red, 0; green, 0; blue, 0 }  ,opacity=1 ,rotate=-358.71]  {$p_{3}$};
\draw (221,1114.85) node [anchor=north west][inner sep=0.75pt]   [align=left] {blow up};
\draw (476.57,1141.85) node  [color={rgb, 255:red, 0; green, 0; blue, 0 }  ,opacity=1 ,rotate=-358.71]  {$p_{3}$};
\draw (20.62,1165.05) node  [color={rgb, 255:red, 0; green, 0; blue, 0 }  ,opacity=1 ,rotate=-1.12]  {$\mathcal{Y}_0$};

\end{tikzpicture}
\end{center}

{\it Case 2: blow up from $\mathbb{P}^1\times \mathbb{P}^1$.} There is only one possibility of blowing up  $\mathbb{F}_0=\mathbb{P}^1\times \mathbb{P}^1$ to get $\mathcal{Y}_0$, as follows, where $l_1\cup l_2=\mathbb{F}_0\setminus \mathbb{C}^2$, $l_3$ is the exceptional divisor of the first blowup of the point $p_1\in l_1\cap l_2$, and  $l_4$ is the exceptional divisor of the second blowup of a point $p_2\in l_3\setminus (l_1\cup l_2)$.

\begin{center}

\tikzset{every picture/.style={line width=0.75pt}} 

\begin{tikzpicture}[x=0.75pt,y=0.75pt,yscale=-1,xscale=1]

\draw   (330.14,941.25) .. controls (330.14,931.09) and (338.38,922.85) .. (348.54,922.85) -- (490.6,922.85) .. controls (500.76,922.85) and (509,931.09) .. (509,941.25) -- (509,996.45) .. controls (509,1006.61) and (500.76,1014.85) .. (490.6,1014.85) -- (348.54,1014.85) .. controls (338.38,1014.85) and (330.14,1006.61) .. (330.14,996.45) -- cycle ;
\draw    (347.22,988.44) -- (492.82,988.44) ;
\draw    (46.66,943.92) -- (47.56,1014.63) ;
\draw   (5,948.65) .. controls (5,938.27) and (13.42,929.85) .. (23.8,929.85) -- (151.2,929.85) .. controls (161.58,929.85) and (170,938.27) .. (170,948.65) -- (170,1005.05) .. controls (170,1015.43) and (161.58,1023.85) .. (151.2,1023.85) -- (23.8,1023.85) .. controls (13.42,1023.85) and (5,1015.43) .. (5,1005.05) -- cycle ;
\draw    (28,986.85) -- (147,985.85) ;
\draw    (316,975.85) -- (184,976.29) ;
\draw [shift={(182,976.29)}, rotate = 359.81] [color={rgb, 255:red, 0; green, 0; blue, 0 }  ][line width=0.75]    (10.93,-3.29) .. controls (6.95,-1.4) and (3.31,-0.3) .. (0,0) .. controls (3.31,0.3) and (6.95,1.4) .. (10.93,3.29)   ;
\draw  [line width=3] [line join = round][line cap = round] (47,989.85) .. controls (47,987.52) and (47,985.52) .. (47,987.85) ;
\draw    (418.61,1079.44) -- (418.02,1025.85) ;
\draw [shift={(418,1023.85)}, rotate = 89.37] [color={rgb, 255:red, 0; green, 0; blue, 0 }  ][line width=0.75]    (10.93,-3.29) .. controls (6.95,-1.4) and (3.31,-0.3) .. (0,0) .. controls (3.31,0.3) and (6.95,1.4) .. (10.93,3.29)   ;
\draw   (332.14,1111.25) .. controls (332.14,1101.09) and (340.38,1092.85) .. (350.54,1092.85) -- (492.6,1092.85) .. controls (502.76,1092.85) and (511,1101.09) .. (511,1111.25) -- (511,1166.45) .. controls (511,1176.61) and (502.76,1184.85) .. (492.6,1184.85) -- (350.54,1184.85) .. controls (340.38,1184.85) and (332.14,1176.61) .. (332.14,1166.45) -- cycle ;
\draw    (349.22,1149.44) -- (494.82,1149.44) ;
\draw    (385.66,1103.92) -- (386.56,1174.63) ;
\draw  [line width=3] [line join = round][line cap = round] (417,988.85) .. controls (417,986.52) and (417,984.52) .. (417,986.85) ;
\draw    (409.12,1103.5) -- (410.02,1174.2) ;
\draw    (361.66,937.72) -- (362.56,1008.43) ;
\draw    (390.12,938.3) -- (391.02,1009) ;
\draw    (440.12,1104.5) -- (441.02,1175.2) ;

\draw (457.77,1002.2) node  [color={rgb, 255:red, 0; green, 0; blue, 0 }  ,opacity=1 ,rotate=-358.71]  {$3l_{3}$};
\draw (31.69,952.42) node  [color={rgb, 255:red, 0; green, 0; blue, 0 }  ,opacity=1 ,rotate=-358.71]  {$2l_{2}$};
\draw (139,1006) node  [color={rgb, 255:red, 0; green, 0; blue, 0 }  ,opacity=1 ,rotate=-358.71]  {$2l_{1}$};
\draw (140.62,949.05) node  [color={rgb, 255:red, 0; green, 0; blue, 0 }  ,opacity=1 ,rotate=-1.12]  {$\mathbb{F}_0$};
\draw (61.69,966.42) node  [color={rgb, 255:red, 0; green, 0; blue, 0 }  ,opacity=1 ,rotate=-358.71]  {$p_{1}$};
\draw (482.77,1165.2) node  [color={rgb, 255:red, 0; green, 0; blue, 0 }  ,opacity=1 ,rotate=-358.71]  {$3l_{3}$};
\draw (371.69,1113.42) node  [color={rgb, 255:red, 0; green, 0; blue, 0 }  ,opacity=1 ,rotate=-358.71]  {$2l_{1}$};
\draw (419.57,968.85) node  [color={rgb, 255:red, 0; green, 0; blue, 0 }  ,opacity=1 ,rotate=-358.71]  {$p_{2}$};
\draw (421.69,1111.42) node  [color={rgb, 255:red, 0; green, 0; blue, 0 }  ,opacity=1 ,rotate=-358.71]  {$2l_{2}$};
\draw (282,958) node  [color={rgb, 255:red, 0; green, 0; blue, 0 }  ,opacity=1 ,rotate=-358.71]  {$p_{1}$};
\draw (212,947.85) node [anchor=north west][inner sep=0.75pt]   [align=left] {blow up};
\draw (403,1058) node  [color={rgb, 255:red, 0; green, 0; blue, 0 }  ,opacity=1 ,rotate=-358.71]  {$p_{2}$};
\draw (333,1047.85) node [anchor=north west][inner sep=0.75pt]   [align=left] {blow up};
\draw (348.69,942.22) node  [color={rgb, 255:red, 0; green, 0; blue, 0 }  ,opacity=1 ,rotate=-358.71]  {$2l_{1}$};
\draw (406.69,941.22) node  [color={rgb, 255:red, 0; green, 0; blue, 0 }  ,opacity=1 ,rotate=-358.71]  {$2l_{2}$};
\draw (456.69,1113.22) node  [color={rgb, 255:red, 0; green, 0; blue, 0 }  ,opacity=1 ,rotate=-358.71]  {$2l_{4}$};
\draw (354.62,1167.05) node  [color={rgb, 255:red, 0; green, 0; blue, 0 }  ,opacity=1 ,rotate=-1.12]  {$\mathcal{Y}_{0}$};

\end{tikzpicture}

\end{center}

{\it Case 3: blow up from $\mathbb{F}_1$.} There is only one possibility of blowing up $\mathbb{F}_1$ to get $\mathcal{Y}_0$, as follows, where $l_1\cup l_2=\mathbb{F}_1\setminus \mathbb{C}^2$ such that $-K_{\mathbb{F}_1}=3l_1+ 2l_2$, $l_3$ is the exceptional divisor of the first blowup of a point $p_1\in l_1\setminus l_2$, and $l_4$ is the exceptional divisor of the second blowup of a point $p_2\in l_1\setminus(l_2\cup l_3)$.

\begin{center}

\tikzset{every picture/.style={line width=0.75pt}} 

\begin{tikzpicture}[x=0.75pt,y=0.75pt,yscale=-1,xscale=1]

\draw   (330.14,941.25) .. controls (330.14,931.09) and (338.38,922.85) .. (348.54,922.85) -- (490.6,922.85) .. controls (500.76,922.85) and (509,931.09) .. (509,941.25) -- (509,996.45) .. controls (509,1006.61) and (500.76,1014.85) .. (490.6,1014.85) -- (348.54,1014.85) .. controls (338.38,1014.85) and (330.14,1006.61) .. (330.14,996.45) -- cycle ;
\draw    (347.22,988.44) -- (492.82,988.44) ;
\draw    (46.66,943.92) -- (47.56,1014.63) ;
\draw   (5,948.65) .. controls (5,938.27) and (13.42,929.85) .. (23.8,929.85) -- (151.2,929.85) .. controls (161.58,929.85) and (170,938.27) .. (170,948.65) -- (170,1005.05) .. controls (170,1015.43) and (161.58,1023.85) .. (151.2,1023.85) -- (23.8,1023.85) .. controls (13.42,1023.85) and (5,1015.43) .. (5,1005.05) -- cycle ;
\draw    (28,986.85) -- (147,985.85) ;
\draw    (316,975.85) -- (184,976.29) ;
\draw [shift={(182,976.29)}, rotate = 359.81] [color={rgb, 255:red, 0; green, 0; blue, 0 }  ][line width=0.75]    (10.93,-3.29) .. controls (6.95,-1.4) and (3.31,-0.3) .. (0,0) .. controls (3.31,0.3) and (6.95,1.4) .. (10.93,3.29)   ;
\draw  [line width=3] [line join = round][line cap = round] (69,986.85) .. controls (69,984.52) and (69,982.52) .. (69,984.85) ;
\draw    (418.61,1079.44) -- (418.02,1025.85) ;
\draw [shift={(418,1023.85)}, rotate = 89.37] [color={rgb, 255:red, 0; green, 0; blue, 0 }  ][line width=0.75]    (10.93,-3.29) .. controls (6.95,-1.4) and (3.31,-0.3) .. (0,0) .. controls (3.31,0.3) and (6.95,1.4) .. (10.93,3.29)   ;
\draw   (332.14,1111.25) .. controls (332.14,1101.09) and (340.38,1092.85) .. (350.54,1092.85) -- (492.6,1092.85) .. controls (502.76,1092.85) and (511,1101.09) .. (511,1111.25) -- (511,1166.45) .. controls (511,1176.61) and (502.76,1184.85) .. (492.6,1184.85) -- (350.54,1184.85) .. controls (340.38,1184.85) and (332.14,1176.61) .. (332.14,1166.45) -- cycle ;
\draw    (349.22,1149.44) -- (494.82,1149.44) ;
\draw    (385.66,1103.92) -- (386.56,1174.63) ;
\draw  [line width=3] [line join = round][line cap = round] (417,988.85) .. controls (417,986.52) and (417,984.52) .. (417,986.85) ;
\draw    (409.12,1103.5) -- (410.02,1174.2) ;
\draw    (361.66,937.72) -- (362.56,1008.43) ;
\draw    (384.12,938.3) -- (385.02,1009) ;
\draw    (440.12,1104.5) -- (441.02,1175.2) ;

\draw (457.77,1002.2) node  [color={rgb, 255:red, 0; green, 0; blue, 0 }  ,opacity=1 ,rotate=-358.71]  {$3l_{1}$};
\draw (31.69,952.42) node  [color={rgb, 255:red, 0; green, 0; blue, 0 }  ,opacity=1 ,rotate=-358.71]  {$2l_{2}$};
\draw (139,1006) node  [color={rgb, 255:red, 0; green, 0; blue, 0 }  ,opacity=1 ,rotate=-358.71]  {$3l_{1}$};
\draw (140.62,949.05) node  [color={rgb, 255:red, 0; green, 0; blue, 0 }  ,opacity=1 ,rotate=-1.12]  {$\mathbb{F}_{1}$};
\draw (74.69,970.42) node  [color={rgb, 255:red, 0; green, 0; blue, 0 }  ,opacity=1 ,rotate=-358.71]  {$p_{1}$};
\draw (482.77,1165.2) node  [color={rgb, 255:red, 0; green, 0; blue, 0 }  ,opacity=1 ,rotate=-358.71]  {$3l_{1}$};
\draw (371.69,1113.42) node  [color={rgb, 255:red, 0; green, 0; blue, 0 }  ,opacity=1 ,rotate=-358.71]  {$2l_{2}$};
\draw (419.57,968.85) node  [color={rgb, 255:red, 0; green, 0; blue, 0 }  ,opacity=1 ,rotate=-358.71]  {$p_{2}$};
\draw (421.69,1111.42) node  [color={rgb, 255:red, 0; green, 0; blue, 0 }  ,opacity=1 ,rotate=-358.71]  {$2l_{3}$};
\draw (282,958) node  [color={rgb, 255:red, 0; green, 0; blue, 0 }  ,opacity=1 ,rotate=-358.71]  {$p_{1}$};
\draw (212,947.85) node [anchor=north west][inner sep=0.75pt]   [align=left] {blow up};
\draw (403,1058) node  [color={rgb, 255:red, 0; green, 0; blue, 0 }  ,opacity=1 ,rotate=-358.71]  {$p_{2}$};
\draw (333,1047.85) node [anchor=north west][inner sep=0.75pt]   [align=left] {blow up};
\draw (348.69,942.22) node  [color={rgb, 255:red, 0; green, 0; blue, 0 }  ,opacity=1 ,rotate=-358.71]  {$2l_{2}$};
\draw (399.69,942.22) node  [color={rgb, 255:red, 0; green, 0; blue, 0 }  ,opacity=1 ,rotate=-358.71]  {$2l_{3}$};
\draw (456.69,1113.22) node  [color={rgb, 255:red, 0; green, 0; blue, 0 }  ,opacity=1 ,rotate=-358.71]  {$2l_{4}$};
\draw (354.62,1167.05) node  [color={rgb, 255:red, 0; green, 0; blue, 0 }  ,opacity=1 ,rotate=-1.12]  {$\mathcal{Y}_{0}$};

\end{tikzpicture}

\end{center}

All  three cases above yield the same $\mathcal{Y}_0$, which is the blowup of $\mathbb{P}^2$ along three colinear points.

Hence, $-K_{\mathcal{Y}_0}=3F_0+2(F_1+F_2+F_3)$. By Corollary~\ref{c.-K_Y0}, we have
$$d_0+e_0=d_1+e_2=d_2+e_1=1 \quad\text{and}\quad d_2+e_2=0$$
Indeed, by comparing the coefficients of $$-K_{\mathcal{Y}_0}=3F_0+2(F_1+F_2+F_3)=3(d_0+e_0)F_0+(d_1+2d_2+e_1+2e_2)(F_1+F_2+F_3)$$
we get $d_0+e_0=1$ and $d_1+2d_2+e_1+2e_2=(d_1+e_2)+(d_2+e_1)+(d_2+e_2)=2$. By Corollary~\ref{c.-K_Y0}, $d_i,e_j$, $0\leq i,j\leq 2$, are all non-negative and $d_1+e_2=d_2+e_1\geq 1$. So we have
$d_0+e_0=d_1+e_2=d_2+e_1=1$ and $d_2+e_2=0$. Thus
\begin{equation*}\pushQED{\qed}
  \text{either }\left\{\begin{array}{l}
     d_0=1  \\
     d_1=1\\
     d_2=0\\
     e_0=0\\
     e_1=1\\
     e_2=0
\end{array}\right.
\text{ or }\left\{\begin{array}{l}
     d_0=0  \\
     d_1=1\\
     d_2=0\\
     e_0=1\\
     e_1=1\\
     e_2=0.
\end{array}\right.
\end{equation*}
\qedhere
\end{proof}

\begin{rmk}
By choosing a family of three points in general position on $\mathbb{P}^2$ degenerating to three colinear points, we can construct a smooth projective family $\mathcal{Z}/\Delta$ such that  $\mathcal{Z}_t \simeq Y(\mbA)$ for each $t \neq 0$ while $\mathcal{Z}_0$ is a blowup of $\mathbb{P}^2$ along three colinear points. But in our situation, we have an extra involution $\Theta$ which prevents this situation.
\end{rmk}

Now we are ready to prove Theorem~\ref{t.main}.

\begin{proof}[Proof of Theorem~\ref{t.main}]
The case of $\mbA=\mbC$ follows from the classification of Mukai varieties.  Now assume $\mbA \neq \mbC$.

Take $\mcX \to \Delta$ to be a specialization of $X(\mbA)$. Assume that $\mcX_0$ is not isomorphic to $X(\mbA)$.  By Proposition~\ref{p.centralfiber}, $\mcX_0$ is an equivariant compactification of $\mathbb{G}_a^n$.  By Proposition~\ref{p.reduction}, we have a smooth family of surfaces $\mcY \to \Delta$ with the central fiber $\mcY_0$ being a $\mathbb{G}_a^2$-surface.  By Proposition~\ref{p.Y0-DE}, we may assume $\mcD_{i,0}=F_0+F_i$, $1\leq i\leq 3$, and $\mcE_{j,0}=F_j$, $1\leq j\leq 3$ (the proof for the other case is similar). By Lemma~\ref{p.involution-Y(A)} and Proposition~\ref{p.involution-family}, the involution $\Theta$ satisfies $\Theta(\mcD_i)=\mcE_i$ and $\Theta(\mcE_i)=\mcD_i$. It follows that $\Theta_0(F_0+F_i)=F_i$ and $\Theta_0(F_i)=F_0+F_i$.  Consider the Mori cone $\overline{\NE}(\mathcal{Y}_0)$, which is the numerical effective cone of curves of $\mathcal{Y}_0$. Since each $F_i$ has negative self intersection, each $F_i$ spans an extremal ray of
$\overline{\NE}(\mathcal{Y}_0)$. Then $F_0+F_i$ is an interior point of a $2$-dimensional extremal face of $\overline{\NE}(\mathcal{Y}_0)$. Since $\Theta_0$ induces an isomorphism of the Mori cone $\overline{\NE}(\mathcal{Y}_0)$, it cannot send the extremal ray of $F_i$ to the non-extremal ray of $F_0+F_i$. This contradiction shows that $\mcX_0 \simeq X(\mbA)$.
\end{proof}


\newcommand{\etalchar}[1]{$^{#1}$}
\providecommand{\bysame}{\leavevmode\hbox to3em{\hrulefill}\thinspace}

\end{document}